\renewcommand{\proof}{\par\noindent{\it Proof.\ \ }}
\def\qed{\ifmmode\square\else\nolinebreak\hfill
$\square$\fi\par\vskip12pt}
\renewcommand{\proof}{\par\noindent{\it Proof.\ \ }}
\def\qed{\ifmmode\square\else\nolinebreak\hfill
$\square$\fi\par\vskip12pt}
\def\l{\langle} \def\r{\rangle}
 \def\ZZ{\mathbb Z}
\def\GG{{\mathcal G}}
\def\BB{{\mathcal B}} \def\CC{{\mathcal C}}
\def\PG{{\rm PG}}  \def\Cos{{\sf Cos}}
\def\D{{\rm D}} \def\S{{\rm S}} \def\G{{\sf G}}
 \def\M{\mathrm{M}} \def\soc{{\sf soc}}
 \def\C{{\bf C}} \def\N{{\bf N}}
  \def\mod{{\sf mod~}}
 \def\Aut{{\sf Aut}}
\def\and{{\sf and}}
\def\Out{{\sf Out}}  \def\K{{\sf K}}
\def\Ga{{\it \Gamma}}
\def\Sig{{\it\Sigma}}
\def\Del{{\it\Delta}}
\def\Ome{{\it\Omega}}
\def\a{\alpha} \def\b{\beta} \def\g{\gamma}
 \def\PSp{{\rm PSp}}
\def\GammaL{{\rm \Gamma L}}
\def\AGammaL{{\rm A\Gamma L}}
\def\PGammaL{{\rm P\Gamma L}} \def\PSigmaL{{\rm P\Sigma L}}
\def\ASigmaL{{\rm A\Sigma L}}
\def\A{{\rm A}}\def\Sym{{\rm Sym}}
\def\PSL{{\rm PSL}}\def\PGL{{\rm PGL}}
\def\GL{{\rm GL}} \def\SL{{\rm SL}} 
\def\AGL{{\rm AGL}}
 \def\PSU{{\rm PSU}}
  \def\D{{\rm D}} \def\G{{\rm G}}
\newtheorem{theorem}{Theorem}[section]
\newtheorem{lemma}[theorem]{Lemma}%
\newtheorem{proposition}[theorem]{Proposition}%
\newtheorem{example}[theorem]{Example}%
    \def\ZZ{\mathbb Z}
\def\BB{{\mathcal B}} \def\CC{{\mathcal C}}
\begin{document}

\title[Arc-transitive bidihedrants]
{Quasiprimitive groups with a biregular dihedral subgroup,
and arc-transitive bidihedrants}
\thanks{1991 MR Subject Classification 20B15, 20B30, 05C25.}
\thanks{This paper was partially supported by the National Natural Science Foundation of China (11961076, 12071023, 12161141005).}

{\author[Pan]{Jiangmin Pan}
\address{Jiangmin Pan\\
School of Statistics and mathematics\\
Yunnan University of Finance and Economics\\
Kunming, Yunnan 650091\\
P. R. China}
\email{jmpan@ynu.edu.cn}}

\author[Yin]{Fu-Gang Yin}
\address{Fu-Gang Yin\\
School of Mathematics and Statistics\\
Central South University\\
Changsha 410083, Hunan\\ 
P. R. China}
\email{18118010@bjtu.edu.cn}

\author[Zhou]{Jin-Xin Zhou}
\address{Jin-Xin Zhou\\
School of Mathematics and Statistics\\ 
Beijing Jiaotong University\\
Beijing 100044\\
P. R. China}
\email{jxzhou@bjtu.edu.cn}

 \curraddr{}


\maketitle

\begin{abstract}
A semiregular permutation group on a set $\Ome$ is called {\em bi-regular} if it has two orbits. A classification is given of quasiprimitive permutation groups with a biregular dihedral subgroup. This is then used to characterize the family of arc-transitive graphs whose automorphism groups containing a bi-regular dihedral subgroup. We first show that every such graph is a normal $r$-cover of an arc-transitive graph whose automorphism group is either quasiprimitive or bi-quasiprimitive on its vertices, and then classify all such quasiprimitive or bi-quasiprimitive arc-transitive graphs.
\end{abstract}

\qquad {\textsc k}{\scriptsize \textsc {eywords.}} {\footnotesize quasiprimitive group, bi-regular, bidihedrant, arc-transitive graph}

\section{Introduction}

Let $G$ be a permutation group on a set $\Omega$. For a point $\a\in\Omega$, denote by $G_\a$ the stabilizer of $\a$ in $G$. We say that $G$ is {\em semiregular} on $\Omega$ if $G_\a=1$ for every $\a\in \Omega$, and {\em regular} if $G$ is transitive and semiregular, and {\em bi-regular} if $G$ is semiregular and has exactly two orbits on $\Ome$.

Let $G\le\Sym(\Ome)$ be a transitive permutation group. If the only partitions of $\Ome$ preserved by $G$ are either the singleton subsets or the whole of $\Ome$, then $G$ is said to be {\em primitive} on $\Omega$. If every non-trivial normal subgroup of $G$ is transitive on $\Omega$, then we say that $G$ is {\em quasiprimitive} on $\Omega$. Note that every primitive group is quasiprimitive.

Primitive permutation groups containing a certain subgroup have been investigated for over a century. In 1900, Burnside \cite{Burnside} proved that a primitive permutation group containing a cyclic regular subgroup either is $2$-transitive or has prime degree. Burnside's result was extended by Schur~\cite{Schur} in 1933 to general cyclic regular subgroups, and further extended by Li \cite{Li-Proc03} in 2003 to abelian regular subgroups. Interest in quasiprimitive groups containing a transitive metacyclic subgroup dates back to 1950 when Wielandt~\cite{Wielandt1950} proved that finite primitive groups containing a regular dihedral group are $2$-transitive. Since then, such groups has been investigated by many authors (see, for example, Scott \cite{Scott}, Nagai \cite{Nagai}, Jones \cite{Jones72}, Li \cite{Li-TAMS05}). In 2021, a complete classification of quasiprimitive groups containing a transitive metacyclic subgroup was achieved by Li et al. in \cite{LPX2021}.

Primitive permutation groups containing a permutation with at most two cycles are also essential in certain number theoretical problem~\cite{Fuchs2012}. Motivated by this, M\" uller \cite{Muller13} in 2013 classified primitive permutation groups containing a bi-regular
cyclic subgroup. This was further extended in \cite{GMPS16} by classifying primitive permutation groups containing a permutation having at most four cycles. 
In this paper, we will classify quasiprimitive permutation groups containing a bi-regular dihedral subgroup. The following is our first theorem.

\begin{theorem}\label{Thm-1}
Let $G$ be a quasiprimitive permutation group  containing a bi-regular dihedral subgroup $H\cong \D_{2n}$ with $n\ge 2$. Then one of the following is true:
\begin{itemize}
\item[(1)] $G$ is  primitive, and one of the following holds:
\begin{itemize}
\item[(i)]  $(G,H)=(\AGL_1(8),\D_4), (\AGammaL_1(8),\D_4), (\AGL_3(2),\D_4)$,
$(\ZZ_2^4:\mathrm{SO}_{4}^{+}(2),\D_8)$, $(\ZZ_2^4:\mathrm{SO}_{4}^{-}(2),\D_8)$, $(\ZZ_2^4:\S_6,\D_8)$,
$(\AGL_4(2),\D_8)$ or $(\AGL_5(2),\D_{16})$;
\item[(ii)] $\soc(G)=\A_{4n}$  in its natural action of degree $4n$;
\item[(iii)] $\soc(G)=\PSL_2(q)$ with $q\equiv 3\pmod{4}$ in its natural action of degree $q+1$;
\item[(iv)] $G=\mathrm{M}_{12}$ in its natural action of degree $12$;
\item[(v)] $G=\mathrm{M}_{24}$ in its natural action of degree $24$;

\end{itemize}

\item[(2)] $G$ is imprimitive, and $(G,H,G_\a)$ is one of the following triples:
\begin{itemize}
\item[(i)] $(\A_5,\D_{10},\ZZ_3), (\S_5,\D_{10},\ZZ_3), (\S_5,\D_{10},\ZZ_6)$;
\item[(ii)] $G=\PSL_{2}(p^e).f$, $H=\D_{p^e+1}$ and $G_\a=\ZZ_p^e : \ZZ_{(p^e-1)/4}.f$, where $p^e \equiv 5\pmod{8}$ and $f\mid e$.
\end{itemize}
\end{itemize}
\end{theorem}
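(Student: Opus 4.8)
The plan is to combine the quasiprimitive O'Nan--Scott theorem with the subgroup structure of the finite simple groups. First, since $H\cong\D_{2n}$ is semiregular with exactly two orbits, each orbit is regular for $H$, so $|\Omega|=2|H|=4n$ and $G$ is transitive of degree $4n$. Writing $H=\l a\r\rtimes\l b\r$ with $a$ of order $n$, semiregularity of $H$ forces $a$ to be a fixed-point-free element with exactly four $n$-cycles, and every reflection in $H$ to be a fixed-point-free involution (a product of $2n$ transpositions). These divisibility and cycle-type constraints, together with the fact that $\D_{2n}$ is $2$-generated and metacyclic, are the levers used throughout. Applying Praeger's quasiprimitive O'Nan--Scott theorem, $\soc(G)=T^k$ for a simple group $T$ and $G$ has one of the eight types $\HA$, $\mathrm{HS}$, $\mathrm{HC}$, $\AS$, $\mathrm{SD}$, $\mathrm{CD}$, $\TW$, $\PA$.

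Next I would dispose of the product- and diagonal-type cases. For $\mathrm{SD}$, $\mathrm{CD}$, $\TW$ and $\PA$ the degree is a proper power of $|T|$ (or a product over the coordinate factors) and the point stabiliser has a product or diagonal shape; a bi-regular $\D_{2n}$ would have to project onto the factors in a way incompatible with its rigid metacyclic structure and with the requirement $4n=|\Omega|$. I expect each of these to fall to a short argument comparing $|H|$ and the index against the element orders available in $H$ (in particular, the unique cyclic subgroup of index $2$ cannot cover the $k\ge 2$ coordinate directions of a product action), and the holomorph types $\mathrm{HS}$, $\mathrm{HC}$ are excluded by the same rigidity considerations, the diagonal shape of the stabiliser being incompatible with a semiregular dihedral subgroup of the required order. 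This leaves the affine type $\HA$ and the almost simple type $\AS$.

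In the affine case $\soc(G)=\ZZ_p^d$ is regular of order $p^d=4n$, forcing $p=2$ and $n=2^{d-2}$, with $G=\ZZ_2^d\rtimes G_0$ and $G_0\le\GL_d(2)$ irreducible. Here the problem becomes linear-algebraic: determine the irreducible subgroups $G_0$ for which the affine group contains a bi-regular $\D_{2^{d-1}}$. Using the description of irreducible (and of $2$-transitive affine) subgroups of $\GL_d(2)$ for the relevant small $d$, together with a direct check of which translation-plus-linear dihedral subgroups act semiregularly with two orbits, should yield exactly the finite list in part~(1)(i).

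The principal effort is the almost simple case $T\le G\le\Aut(T)$, treated via the classification of finite simple groups. For each family one compares the degree $4n$ and the required fixed-point-free element of order $n$ with four cycles against the maximal-subgroup data and the known dihedral subgroups. The natural action of $\A_{4n}$ and the degree-$(q+1)$ action of $\PSL_2(q)$ with $q\equiv 3\pmod 4$ (where $\D_{(q+1)/2}$ lies inside the dihedral normaliser of a non-split torus and is readily checked to be bi-regular) account for (1)(ii)--(iii), while $\mathrm{M}_{12}$ and $\mathrm{M}_{24}$ emerge from the sporadic analysis in (1)(iv)--(v). For the imprimitive quasiprimitive actions one drops maximality of $G_\a$: a case analysis inside $\A_5$, $\S_5$ and inside the Borel subgroup of $\PSL_2(p^e)$ (with $p^e\equiv 5\pmod 8$, so that $(p^e-1)/4$ is an odd integer) pins down the stabilisers $\ZZ_3$, $\ZZ_6$ and $\ZZ_p^e{:}\ZZ_{(p^e-1)/4}.f$ of part~(2). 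The hard part is the exhaustive elimination of the remaining classical and exceptional groups, and of their many imprimitive actions, where one must show that no faithful quasiprimitive action admits a fixed-point-free dihedral group of the exact order $2n=|\Omega|/2$; this is where the bulk of the CFSG bookkeeping, orbit counting and element-order analysis is concentrated.
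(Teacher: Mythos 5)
Your proposal has a genuine gap, and it sits exactly where the theorem is hardest: the quasiprimitive-but-imprimitive case (part (2)). You dispose of it in one sentence, saying a case analysis inside $\A_5$, $\S_5$ and the Borel subgroup of $\PSL_2(p^e)$ ``pins down'' the stabilisers; but that sentence presupposes the conclusion, since nothing in your plan explains why only these groups can arise. The paper's proof of part (2) rests on a reduction you never make: since $G$ is quasiprimitive, it acts \emph{faithfully} on any maximal $G$-invariant partition $\BB$, so $G\cong G^{\BB}$ is a primitive group of smaller degree, and the whole problem becomes controlling what $H$ induces on $\BB$. The elementary but crucial Lemma~\ref{Subgroup} (in a dihedral group, subgroups $R_1,\dots,R_t$ of equal order $\ge 3$ intersect in a subgroup containing a nontrivial normal subgroup of $H$), combined with Lemma~\ref{Block}, forces $|H_B|\le 2$, and one reads off that on $\BB$ the group $H$ induces either a regular dihedral group, a regular or bi-regular cyclic group, or an element $a$ of cycle type $(n,n/2,n/2)$. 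Each alternative is then matched against an \emph{existing} classification -- Theorem~\ref{pri-d-gps} (Li \cite{Li-TAMS05}) for regular dihedral subgroups, Jones \cite{Jones02} and M\"uller \cite{Muller13} for regular and bi-regular cyclic subgroups, Theorem~\ref{Less-4Cyc} for the cycle type $(n,n/2,n/2)$ -- and quasiprimitivity on $\Ome$ (as opposed to on $\BB$) eliminates almost every candidate. Even the congruence $p^e\equiv 5\pmod 8$ in (2)(ii), which you simply assert, must be derived: $p^e\equiv 1\pmod 4$ comes from the block stabiliser needing an index-$2$ (or index-$4$) subgroup, and $p^e\equiv 1\pmod 8$ is excluded because $\PSL_2(q)$ has a single conjugacy class of involutions, so $H$ and $G_\a$ would share one, contradicting semiregularity of $H$. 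Without a reduction of this kind, your plan amounts to surveying all imprimitive quasiprimitive actions of all affine and almost simple groups directly, and ``CFSG bookkeeping'' is not a workable method for that.

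Several of your other steps would also fail as stated. Your claim that the product-action O'Nan--Scott types fall to ``a short argument comparing $|H|$ and the index'' is contradicted by the actual candidates: a primitive group with $\soc(G)=\A_8^2$ in product action on $64$ points does contain elements of order $16$ with four $16$-cycles, so a potential bi-regular $\D_{32}\le \S_8\wr\ZZ_2$ survives all such structural arguments and is eliminated in the paper's Lemma~\ref{PrimBiReg} only by explicit computation (likewise the candidate with socle $\PSL_3(2)^2$). In the affine case you restrict to ``the relevant small $d$'' with no mechanism for bounding $d$: the bound $p=2$, $d\le 5$ comes from the tables of \cite{GMPS15}, and without that input you cannot rule out a bi-regular $\D_{2^{d-1}}$ inside some primitive subgroup of $\AGL_d(2)$ for large $d$. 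Finally, for the primitive almost simple case you propose to redo the CFSG analysis family by family, whereas the single observation that $a$ has cycle type $(n,n,n,n)$ on $\Ome$ plugs $G$ directly into the classification of Theorem~\ref{Less-4Cyc} (\cite{GMPS16}), reducing the primitive case to a finite check (together with the Singer-subgroup Lemma~\ref{lm:mdq} to handle the $\PSL_d(q)$ lines); declining to use that classification turns a short argument into re-proving a substantial paper, which your outline does not actually do.
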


One more motivation of this paper is arisen naturally from the study of arc-transitive bi-Cayley graphs
of dihedral groups. Before proceeding, we set some notation and definitions. Let $\Ga$ be a graph. We denote by $V\Ga$, $E\Ga$ and $\Aut\Ga$ the vertex set, edge set, and full automorphism group of $\Ga$, respectively. Let $G\leq\Aut\Ga$. We say that $\G$ is {\em $G$-vertex-transitive\/} or {\em $G$-arc-transitive\/} if $G$ is transitive on the vertices or the order pairs of adjacent vertices, respectively.
When $G=\Aut\Ga$, a $G$-vertex-transitive or $G$-arc-transitive graph $\Ga$ is simply called {\em vertex-transitive\/} or {\em arc-transitive\/}, respectively. Furthermore, a graph $\Ga$ is said to be \emph{$(G,2)$-arc-transitive} if $G$ acts transitively on the set of all \emph{$2$-arcs} of $\Ga$ (that is, the set of all triples $(u,v,w)$ such that $u,v,w\in V\Ga$, $u\neq w$ and $\{u,v\},\{v,w\}\in E\Ga$); and $\Ga$ is \emph{$2$-arc-transitive} if such a group exists.

A graph $\Ga$ is called a {\it Cayley graph} of a group $G$
if its full automorphism $\Aut\Ga$ contains a subgroup
which is isomorphic to $G$
and regular on the vertex set of $\Ga$, while $\Ga$ is called a {\it bi-Cayley graph} of a group $H$
if $\Aut\Ga$ contains a subgroup that is isomorphic to $H$ and semiregular on $V\Ga$ with exactly two orbits.
For convenience, we often call a Cayley graph of a cyclic group or dihedral group a {\it circulant} or {\it dihedrant}, respectively,
and call a bi-Cayley graph of a cyclic group or dihedral group a {\it bi-circulant} or {\it bi-dihedrant}, respectively.

2-Arc-transitive circulants and dihedrants have been classified by Alspach, Conder, Maru\v si\v c and Xu \cite{ACMX96},
and Maru\v si\v c, Malni\v c and Du \cite{DMM08,Maru03}, respectively.
Bi-Cayley graphs can be viewed as a natural generalization of Cayley graphs,
and have many interesting properties, see \cite{CZF20,ZF16} for example;
also there are many well-known graphs which are bi-Cayley graphs but not Cayley graphs,
such as the Petersen graph, the Gray graph and the Hoffman-Singleton graph.
Very recently, Devillers, Giudici and Jin \cite{DGJ22}
characterized arc-transitive bi-circulants.

Our second aim is to characterize arc-transitive bi-dihedrants. The main approach is to use the normal quotient. Let $\Ga$ be a connected graph. Assume that $G\leq\Aut\Ga$ is such that $\Ga$ is $G$-vertex-transitive. Let $N$ be a normal subgroup of $G$ such that $N$ is intransitive on $V\Ga$. The {\em $N$-normal quotient graph\/} of $\Ga$ is defined as the graph $\Ga_N$ with vertices the $N$-orbits in $V\Ga$ and with two distinct $N$-orbits adjacent if there exists an edge in $\Ga$ consisting of one vertex from each of these orbits. The graph $\Ga$ is a {\rm normal $r$-cover} of $\Ga_N$ if for each edge $\{B, B'\}$ of $\Ga_N$ and each $v\in B$, we have $|\Ga(v)\cap B'|=r$. In particular, if $r=1$, then $\Ga$ is called a {\em normal cover} of $\Ga_N$.

Observe that for a $G$-arc-transitive graph $\Ga$ with at least three vertices, we may take a normal subgroup $N$ maximal subject to having at least three orbits on $V\Ga$. Then the $N$-normal quotient $\Ga_N$ is $G/N$-arc-transitive and $G/N$ is either quasiprimitive or bi-quasiprimitive on $V\Ga$. (A transitive permutation group $G\leq\Sym(\Ome)$ is said to be {\em bi-quasiprimitive} if every non-trivial normal subgroup of $G$ has at most two orbits on $\Ome$ and there exists one which has exactly
two orbits on $\Ome$.)  Actually, we can prove that each basic arc-transitive bi-dihedrant is an orbital graph of
a quasiprimitive or bi-quasiprimitive permutation group containing a (bi-)regular cyclic or dihedral subgroup. This enables us to make use of Theorem~\ref{Thm-1} to classify `basic' arc-transitive bi-dihedrants, and so obtain our second theorem.
For convenience, certain graphs appearing in Theorem~\ref{Thm-2} will be introduced in
Section 4.

\begin{theorem}\label{Thm-2}
Every connected arc-transitive bi-dihedrant is a normal $r$-cover of a graph $\Sig$ which is one of the following graphs:
\begin{itemize}
\item[(1)] $\Sig$ is one of the following dihedrant:
\begin{itemize}
\item[(a)] $\K_{4n}$, $\K_{2n,2n}$, $\K_{2n,2n}-2n\K_2$;
\item[(b)]  the  affine polar graphs $\mathrm{VO}^{\pm}_4(2)$ and their complementary graphs $\overline{\mathrm{VO}^{\pm}_4(2)}$;
\item[(c)] $\mathrm{F020A}$, $ \GG_{20}^{(1)}, \GG_{20}^{(2)}, \GG_{20}^{(3)}$;
\item[(d)] $\mathcal{G}_{2,q}$ with $q\equiv 5\pmod{8}$;
\item[(e)] $B(\mathrm{PG}(d-1,q))$ and $B'(\mathrm{PG}(d-1,q))$ with $d\geq 3$;
\item[(f)]  $\mathcal{G}_{d,q}^{(1)}$ or $ \mathcal{G}_{d,q}^{(2)}$  with $d\geq 3$ odd and $q$ odd.
\end{itemize}
\item[(2)] $\Sig$ is an arc-transitive bi-circulant or an arc-transitive circulant, as listed in \cite[Theorem 1.1]{DGJ22};
\item[(3)] $\Sig$ is an arc-transitive dihedrant, as listed in \cite[Theorem 1.2]{Pan12}.
\end{itemize}
\end{theorem}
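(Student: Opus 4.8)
The plan is to run the \emph{normal quotient} reduction sketched in the Introduction and then feed the resulting ``basic'' graphs into Theorem~\ref{Thm-1} together with the two known classifications \cite{DGJ22} and \cite{Pan12}. Let $\Ga$ be a connected arc-transitive bi-dihedrant, write $A=\Aut\Ga$, and let $H\cong\D_{2n}$ be a bi-regular dihedral subgroup of $A$ with orbits $\Del_1,\Del_2$. First I would choose a normal subgroup $N\trianglelefteq A$ maximal subject to having at least three orbits on $V\Ga$. By the theory quoted just before the statement, $\Ga$ is then a normal $r$-cover of the quotient $\Sig:=\Ga_N$, the group $\ov A:=A/N$ acts arc-transitively on $\Sig$, and $\ov A$ is quasiprimitive or bi-quasiprimitive on $V\Sig$; everything thus reduces to identifying the possible basic graphs $\Sig$.

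The second step is to track the images of $H$ and of its rotation subgroup $R=\l a\r\cong\ZZ_n$. Since $N\cap H$ is normal in $H$ it is one of the standard normal subgroups of $\D_{2n}$, so $\ov H:=HN/N$ is again cyclic or dihedral; according to whether the $N$-orbits respect or fuse the two $H$-orbits $\Del_1,\Del_2$, and to whether it is the image of $H$ or of $R$ that acts regularly or bi-regularly on $V\Sig$, the basic graph $\Sig$ is a bi-dihedrant, a dihedrant, a bi-circulant, or a circulant. In the dihedrant case $\Sig$ lands in case~(3) by \cite[Theorem 1.2]{Pan12}, and in the (bi-)circulant case it lands in case~(2) by \cite[Theorem 1.1]{DGJ22}, so these possibilities are immediately accounted for.

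The substantive case is when $\Sig$ is a bi-dihedrant, so that $\ov A$ contains the bi-regular dihedral subgroup $\ov H$. When $\ov A$ is quasiprimitive I would apply Theorem~\ref{Thm-1} directly: since $\Sig$ is arc-transitive and $\ov A$ is transitive on $V\Sig$, $\Sig$ is an orbital graph of $\ov A$, so I would run through each entry of Theorem~\ref{Thm-1}, pick out the self-paired orbitals yielding a \emph{connected} arc-transitive graph, and match them with the families in~(1). The $2$-transitive entries ($\AGL_1(8)$, the $\AGL_d(2)$, $\soc(\ov A)=\A_{4n}$, $\M_{12}$, $\M_{24}$) give the complete graphs $\K_{4n}$ of~(1)(a); the remaining affine entries $\ZZ_2^4{:}\mathrm{SO}_4^{\pm}(2)$ and $\ZZ_2^4{:}\S_6$ give the affine polar graphs $\mathrm{VO}_4^{\pm}(2)$ and their complements in~(1)(b); the degree-$20$ entries of Theorem~\ref{Thm-1}(2)(i) give $\mathrm{F020A}$ and the $\GG_{20}^{(i)}$ of~(1)(c); and the $\PSL_2(q)$ actions give $\mathcal G_{2,q}$, $B(\mathrm{PG}(d-1,q))$, $B'(\mathrm{PG}(d-1,q))$ and $\mathcal G_{d,q}^{(i)}$ of~(1)(d)--(f). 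When instead $\ov A$ is bi-quasiprimitive I would pass to the index-two subgroup $\ov A^{+}$ preserving each orbit of a normal subgroup of $\ov A$ having exactly two orbits, and analyse the induced action on one such orbit: this recovers the remaining bipartite members $\K_{2n,2n}$ and $\K_{2n,2n}-2n\K_2$ of~(1)(a), and otherwise reduces once more to Theorem~\ref{Thm-1}, or to \cite{DGJ22} when the regular subgroup surviving on a part is cyclic.

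The main obstacle is precisely this orbital-graph bookkeeping in the bi-dihedrant case. For each infinite family one must write the orbitals down explicitly, decide which are self-paired and connected, verify that a bi-regular copy of $\D_{2n}$ is genuinely present, and then prove that the resulting graph is isomorphic to the claimed member of~(1); the $\PSL_2(q)$ families of~(1)(d)--(f), where several orbital graphs of differing valencies occur and must be separated and named, together with the careful analysis of $\ov A^{+}$ needed in the bi-quasiprimitive reduction, are where the real difficulty lies.
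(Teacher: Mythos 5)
Your overall skeleton (normal quotient, split into quasiprimitive/bi-quasiprimitive basic graphs, then feed into Theorem~\ref{Thm-1}, \cite{DGJ22} and \cite{Pan12}) is the same as the paper's, but your bookkeeping of where the families in part~(1) come from contains a genuine error that would sink the argument. Theorem~\ref{Thm-1} has \emph{no} entries with socle $\PSL_d(q)$ for $d\geq 3$: its linear entries are only the $\PSL_2(q)$ actions of degree $q+1$ and the imprimitive $\PSL_2(p^e).f$ examples. Consequently the bipartite graphs $B(\mathrm{PG}(d-1,q))$, $B'(\mathrm{PG}(d-1,q))$ of (1)(e) and the graphs $\mathcal{G}_{d,q}^{(i)}$ of (1)(f), whose automorphism groups involve $\PSL_d(q)$ with $d\geq 3$, cannot be extracted from the quasiprimitive case as you claim (``the $\PSL_2(q)$ actions give \dots $B(\mathrm{PG}(d-1,q))$, $B'(\mathrm{PG}(d-1,q))$ and $\mathcal G_{d,q}^{(i)}$''). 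In the paper these families arise exclusively in the bi-quasiprimitive case: (1)(e) in Lemma~\ref{Prim}, via groups with two \emph{inequivalent} $2$-transitive actions (Cameron's result \cite{Cameron}), and (1)(f) in Lemma~\ref{Imprim}, where $G^{+}$ acts faithfully but \emph{imprimitively} on the two parts --- a long analysis (blocks of size $2$, Singer subgroups, identification of the index-two point stabilizer) that is entirely absent from your sketch. Running your plan as written, the quasiprimitive case would terminate without producing (1)(e)--(f), and your one-sentence bi-quasiprimitive reduction would never recover them.

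The second, related gap is that your bi-quasiprimitive step invokes the wrong classification tools. If $H\leq \ov{A}^{+}$, then $H\cong\D_{2n}$ is \emph{regular} (not bi-regular) on each part, so what is needed is the classification of primitive groups containing a regular dihedral subgroup --- the paper's Theorem~\ref{pri-d-gps}, i.e.\ \cite{Li-TAMS05} corrected by \cite{LSW14} --- an ingredient you never mention; this is precisely how $\K_{2n,2n}-2n\K_2$ and the candidates behind (1)(e) are obtained. If instead $H\not\leq \ov{A}^{+}$ and $H\cap\ov{A}^{+}$ is cyclic of order $n$, then this cyclic group is bi-regular on \emph{each} part, hence has four orbits on $V\Sig$, so $\Sig$ is not a bi-circulant and \cite{DGJ22} does not apply; the correct tool is M\"uller's classification \cite{Muller13} of primitive groups with a bi-regular cyclic subgroup, applied to $(\ov{A}^{+})^{\Del_i}$. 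Only when $H\cap\ov{A}^{+}\cong\D_n$ is bi-regular on each part does Theorem~\ref{Thm-1}(1) enter, as in the paper. Without these distinctions (and without the faithful/unfaithful and primitive/imprimitive case division for $\ov{A}^{+}$ on the parts), the bi-quasiprimitive analysis cannot be completed.
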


\section{Preliminaries}

\subsection{Two group theoretical results}
The first result concerns primitive permutation groups with an element having at most four cycles, see \cite[Theorem 1.1]{GMPS16}.

\begin{theorem}\label{Less-4Cyc}
Let $G$ be a finite primitive permutation of degree $n$, and let $g$ be an element of $G$ having cycle lengths
$($counted with multiplicity$)$ $(n_1,\dots,n_N)$ with $N\le 4$.
Then the socle $\soc(G)=T^{\ell}$ for some simple group $T$ and integer $\ell$, and one of the following holds:

\begin{itemize}
\item[(a)] $\soc(G) =\A_m^{\ell}$ in its natural product action of degree $m^{\ell} $ with  $\ell \leq 3$;
\item[(b)] $\soc(G) =\PSL_d(q)^{\ell}$ in its natural product action of degree $((q^d-1)/(q-1))^{\ell} $ with  $\ell \leq 3$ and $d\geq 2$;
\item[(c)] $T$, $\ell$ and $|\Omega|$ are in one of the rows of  Tables 3, 4 or 5 of~\cite{GMPS16};
\item[(d)]  $\soc(G)$ is elementary abelian, and $g$ and $G$ are described in Theorems 1.5 and 1.6 of~\cite{GMPS15}.
\end{itemize}
Moreover, Tables 1 and 2 list all of the possible $N$-tuples $(n_1,\dots,n_N)$ arising from parts (a) and (b) respectively.
\end{theorem}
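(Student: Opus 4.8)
Since this statement is quoted verbatim as \cite[Theorem 1.1]{GMPS16}, the only ``proof'' required in the present paper is the citation itself; the result is one of the deep, CFSG-dependent classifications and is not reproved here. Nevertheless it is worth recording the shape of the argument that lies behind it, since this explains where the listed families come from. The plan is to combine the O'Nan--Scott theorem with a single quantitative observation: \emph{an element with few cycles must have very large order}. Indeed, the number of cycles $N$ of $g$ on $\Omega$ equals the number of orbits of $\langle g\rangle$, so by the Cauchy--Frobenius counting lemma $N=\frac{1}{|g|}\sum_{i=0}^{|g|-1}\mathrm{fix}(g^i)\ge \frac{n}{|g|}$, whence $|g|\ge n/N\ge n/4$. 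Thus $g$ is forced to have order at least a quarter of the degree, and the whole classification reduces to locating such large-order elements inside primitive groups of each O'Nan--Scott type.

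First I would dispose of the non-basic types. For a group of product-action type with socle $T^\ell$ acting on $\Delta^\ell$, an element of $G$ projects to a permutation of the $\ell$ coordinates together with coordinate maps, and a short computation shows that having at most four cycles on the product forces $\ell\le 3$ and pins the component action down to a primitive group on $\Delta$ that again contains an element of at most four cycles; this feeds an induction and yields the bound $\ell\le 3$ in parts (a) and (b). Diagonal and twisted-wreath types are then eliminated, or heavily restricted, because there the degree $n$ is large relative to the element orders available, so the requirement $|g|\ge n/4$ cannot be met. What remains is the affine case and the almost simple case.

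The affine case, where $\soc(G)$ is elementary abelian and $G\le\AGL_d(p)$ with $g$ an affine map, is exactly the content of \cite[Theorems 1.5 and 1.6]{GMPS15}, so I would simply invoke that companion analysis to obtain part (d). The main obstacle, and the bulk of the work, is the almost simple case. Here one runs through the finite simple groups $T=\soc(G)$ together with their primitive actions, equivalently their maximal subgroups $M=G_\alpha$, and asks which admit an element of order at least $|G:M|/4$ with at most four cycles. This is carried out family by family, using detailed information on conjugacy classes, centraliser orders and Zsigmondy (primitive prime divisor) primes to control the available element orders, together with the subgroup structure to read off cycle types; the natural actions of $\A_m$ and $\PSL_d(q)$ produce the infinite families (a) and (b), while the finitely many sporadic survivors are collected into Tables 3--5 and their admissible cycle tuples recorded in Tables 1 and 2. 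The delicate point is not any single group but the uniformity of this bookkeeping across all the families, which is precisely why the theorem is imported rather than reproved here.
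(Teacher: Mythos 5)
You are correct: the paper offers no proof of this statement at all --- it is imported verbatim as \cite[Theorem 1.1]{GMPS16}, so the citation is the entire ``proof'', exactly as you say. Your background sketch (the bound $|g|\ge n/N\ge n/4$ forcing large element order, followed by an O'Nan--Scott reduction with the affine case delegated to \cite{GMPS15} and the almost simple case handled family by family) is a fair description of how the cited work proceeds, but none of it is needed or reproduced in this paper.
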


The next is about quasiprimitive permutation groups containing a regular dihedral group.
Such groups were classified in \cite[Theorem~1.5]{Li-TAMS05}, with two examples missed and pointed out on \cite{LSW14}, namely,
the affine groups $\AGammaL(2,4)\cong\ZZ_2^4:\GammaL(2,4)\cong\ZZ_2^4:(\ZZ_3\times\A_5).\ZZ_2$
and $\ASigmaL(2,4)\cong\ZZ_2^4:\S_5$, both containing a regular dihedral subgroup $\D_{16}$.

\begin{theorem}\label{pri-d-gps}
Let $G\le\Sym(\Ome)$ be a quasiprimitive permutation group which
contains a regular dihedral subgroup $H$. Then $H$ is
$2$-transitive, and one of the following holds, where $w\in\Ome$:

\begin{itemize}
\item[(a)] $(G,G_w,H)=(\A_4, \ZZ_3,2)$, $(\S_4,\S_3,2)$, $(\AGL_3(2),\GL_3(2),4)$, $(\AGL_4(2),\GL_4(2),8)$,
$(2^4:\S_5,\S_5,\D_{16})$,  $(\ZZ_2^4:\A_6,\A_6,8)$,  $(2^4:\GammaL_2(4),\D_{16})$ or $(\ZZ_2^4:\A_7,\A_7,8)$;
\item[(b)] $(G,G_w,H)=((\S_{2m},\D_{2m},\S_{2m-1}), \A_{4m},\D_{4m},\A_{4m-1})$;
\item[(c)] $(G,G_w,H)=(\M_{12},\D_{12},\M_{11}), (\M_{24},\D_{24},\M_{23}),(\M_{22}.2,\D_{22},\PSL(3,4).2)$;
\item[(d)] $G=\PSL_2(p^e).o$, $H=\D_{p^e+1}$ and $G_w=\ZZ_p^e:\ZZ_{p^e-1\over2}.o$ where $p^e\equiv3\ (\mod 4)$ and $o\le\Out(\PSL_2(p^f))\cong\ZZ_2\times\ZZ_e$;
\item[(e)] $G=\PGL_2(p^e).\ZZ_f$, $H=\D_{p^e+1}$ and $G_w=(\ZZ_p^e:\ZZ_{p^e-1}).\ZZ_f$ where $p^e\equiv 1 ~(\mod 4)$ and $f\mid e$.
\end{itemize}
\end{theorem}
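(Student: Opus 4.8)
The plan is to exploit the cyclic subgroup of $H$ and reduce to the classification of primitive groups possessing an element with few cycles. Since $H\cong\D_{2n}$ is regular on $\Ome$, the standard correspondence between regular subgroups and exact factorisations gives $G=HG_w$ with $H\cap G_w=1$ and $|G_w|=|G|/(2n)$ for $w\in\Ome$. Write $H=\langle a,b\rangle$ with $a^n=b^2=1$ and $b^{-1}ab=a^{-1}$, and put $C=\langle a\rangle\cong\ZZ_n$. As a subgroup of the regular group $H$ the group $C$ is semiregular, and since $|C|=n=|\Ome|/2$ it has exactly two orbits on $\Ome$; equivalently $C$ is a bi-regular cyclic subgroup, so $a$ is a permutation of $\Ome$ with precisely $N=2$ cycles, both of length $n$. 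This two-cycle element is the leverage for the whole argument.

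First I would show that $G$ is primitive. Running through the O'Nan--Scott description of quasiprimitive groups, the presence of a regular \emph{dihedral} subgroup together with an element having only two cycles is very restrictive: the twisted-wreath, the simple-diagonal and compound-diagonal, and the holomorph-of-simple and holomorph-compound types are eliminated because a regular subgroup there must respect a product or diagonal decomposition that a dihedral group of order $2n$ cannot carry, while in the surviving affine, almost simple and product-action types the group is primitive. An imprimitive quasiprimitive configuration would then be excluded by taking an overgroup $G_w<M<G$: by the Dedekind law $M=(M\cap H)G_w$ is an exact factorisation, so $M\cap H$ is a proper cyclic or dihedral subgroup acting regularly on a block, and the induced constituent cannot accommodate the two-cycle element. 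With primitivity secured, Theorem~\ref{Less-4Cyc} applies to $a$ with cycle tuple $(n,n)$, leaving $\soc(G)$ equal to an elementary abelian group, to $\A_m^\ell$ or $\PSL_d(q)^\ell$ in product action, or to one of the finitely many almost simple entries of Tables 3--5 of~\cite{GMPS16}.

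Next I would decide, socle type by socle type, whether $C$ extends to a regular dihedral group and then compute the triple $(G,G_w,H)$. The two-equal-cycle condition together with the dihedral regular subgroup forces $\ell=1$, so the product actions collapse. For $\soc(G)=\A_m$ one obtains the natural action of degree $m=2n$, the regular copy of $H$ lying in $\A_{2n}$ or only in $\Sym(2n)$ according as the reflections are even or odd, that is according to the parity of $n$; this yields the symmetric family $G=\S_{2m}$, $G_w=\S_{2m-1}$, $H\cong\D_{2m}$ and the alternating family $G=\A_{4m}$, $G_w=\A_{4m-1}$, $H\cong\D_{4m}$ of part~(b). For $\soc(G)=\PSL_d(q)$ on $\PG(d-1,q)$ the Singer torus provides a bi-regular cyclic group, but an involution in its normaliser inverting that torus exists essentially only when $d=2$; this isolates the projective-line action of degree $q+1$ (with $q=p^e$), where the index-two subgroup $\ZZ_{(q+1)/2}$ of a non-split torus is bi-regular and extends to a regular dihedral group $\D_{q+1}$. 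Recording how this dihedrant and the inverting involution meet $\PSL_2$ rather than $\PGL_2$, and how diagonal and field automorphisms extend it, produces parts~(d)--(e) together with the congruence conditions on $p^e$ modulo $4$ and the Borel-type stabilisers. The sporadic entries of Tables 3--5 reduce, after a direct search for a regular dihedrant, to the Mathieu groups of part~(c); and the elementary abelian case forces $p=2$ since $2n=p^d$, whereupon one enumerates the irreducible subgroups $G_0\le\GL_d(2)$ for which $\ZZ_2^d{:}G_0$ contains a regular $\D_{2^d}$, giving the affine list of part~(a).

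The hard part is completeness, in two places. In the affine case one must certify that \emph{no} irreducible $G_0\le\GL_d(2)$ beyond those listed admits a regular dihedral subgroup; this is precisely where the original classification overlooked the two examples $\ASigmaL(2,4)\cong\ZZ_2^4{:}\S_5$ and $\AGammaL(2,4)\cong\ZZ_2^4{:}\GammaL_2(4)$, restored in~\cite{LSW14}, so the subgroup analysis of $\GL_4(2)\cong\A_8$ must be carried through exhaustively rather than by sampling the obvious candidates. In the $\PSL_2(p^e)$ case the delicate point is the bookkeeping of torus normalisers and of the diagonal and field automorphisms, which is what pins down the extension data $.o$ and $.\ZZ_f$ and the exact congruences, and where one must avoid both over- and under-counting the groups that realise a given regular dihedrant. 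Once the list is complete the $2$-transitivity assertion follows at once, since every listed action is $2$-transitive, recovering Wielandt's theorem~\cite{Wielandt1950} in the primitive case.
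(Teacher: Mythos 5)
First, a point of comparison that you could not have known: the paper does \emph{not} prove Theorem~\ref{pri-d-gps} at all. It is quoted from \cite[Theorem~1.5]{Li-TAMS05}, amended by the two affine examples $\AGL(2,4){\cdot}\ZZ_2$-type groups found in \cite{LSW14}; so your proposal is not competing with an internal argument but attempting to reconstruct a classification whose published proof uses different machinery (and predates the tools you invoke). Judged on its own terms, your treatment of the \emph{primitive} case is a reasonable roadmap: apply Theorem~\ref{Less-4Cyc} to the element $a$ with cycle type $(n,n)$, settle the alternating/symmetric family by the parity of the reflections in the regular representation, reduce the projective case to $d=2$ via the normaliser of a Singer subgroup (this is exactly the role played by Lemma~\ref{lm:mdq} in the present paper), and enumerate the affine candidates in $\GL_d(2)$ with the \cite{LSW14} correction in mind.

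However, your reduction from quasiprimitive to primitive has a genuine gap, and it sits precisely where the content of the $2$-transitivity assertion lies. First, the O'Nan--Scott step claims that quasiprimitive groups of affine, almost simple or product-action type are automatically primitive; this is false for the almost simple (and product-action) types: for example, $\A_5$ acting on the $20$ cosets of $\ZZ_3$ is quasiprimitive and imprimitive, and it is exactly such actions that survive in the bi-regular analogue, Theorem~\ref{Thm-1}(2). Second, the argument offered to kill imprimitive configurations does not work. If $\BB$ is a maximal block system, quasiprimitivity makes $G$, hence $H$, faithful on $\BB$, so $H_B$ is core-free in the dihedral group $H$; thus $H_B$ is generated by a reflection, $|B|=2$, $|\BB|=n$, and the image of $a$ on $\BB$ is a \emph{single} $n$-cycle. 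Your claim that ``the induced constituent cannot accommodate the two-cycle element'' therefore fails: what actually has to be excluded is a primitive group of degree $n$ containing a regular cyclic subgroup, and by Jones's classification \cite{Jones02} such groups abound ($\ZZ_p\le G\le\AGL_1(p)$, $\A_n$, $\S_n$, $\PGL_d(q)\le G\le\PGammaL_d(q)$, $\PSL_2(11)$, $\M_{11}$, $\M_{23}$). The exclusion must instead run case by case through that list using quasiprimitivity of $G$ on $\Ome$ rather than cycle structure on $\BB$: for instance, if $G\cong\S_n$ on $\BB$ then $G_w=\A_{n-1}\le\soc(G)=\A_n$, so the socle has an orbit of length $n=|\Ome|/2$ and is intransitive, a contradiction; if $\PGL_2(q)\le G\le\PGammaL_2(q)$ one shows the index-two subgroup $G_w$ of a Borel-type stabiliser forces a proper normal subgroup of $G$ to be intransitive on $\Ome$, and so on. This is exactly the style of argument the paper carries out in Case~2 of the proof of Lemma~\ref{QP-bireg} for the bi-regular setting, and nothing in your proposal substitutes for it; until it is supplied, the assertion that $G$ is primitive --- equivalently, that the final list contains only $2$-transitive actions --- is unsupported.
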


\subsection{Two technical lemmas}

The first is an observation on subgroups of dihedral groups.

\begin{lemma}\label{Subgroup}
Suppose that $H$ is a dihedral group  and $R_1,\dots,R_t$ are subgroups of $H$
such that $|R_1|=\cdots=|R_t|\ge 3$.
Then $R_1\cap\cdots\cap R_t$ contains a nontrivial normal subgroup of $H$.
\end{lemma}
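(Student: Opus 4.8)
The plan is to reduce everything to the cyclic ``rotation'' subgroup of $H$. Write $H\cong\D_{2m}$ and let $C$ be its cyclic subgroup of index two, of order $m$. Two structural facts drive the argument. First, every subgroup of $C$ is normal in $H$: since $C$ is cyclic it has a unique subgroup of each order dividing $m$, so each such subgroup is characteristic in $C$, and $C\trianglelefteq H$. Hence it suffices to produce a single nontrivial subgroup of $C$ contained in all of the $R_i$; its normality in $H$ will then be automatic. Second, the subgroup lattice of the cyclic group $C$ is totally ordered by divisibility, which makes intersections inside $C$ transparent to compute.

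First I would pass from each $R_i$ to its rotation part $R_i\cap C$. Since $[H:C]=2$, the index $[R_i:R_i\cap C]$ is $1$ or $2$, so $f_i:=|R_i\cap C|$ is a divisor of $k:=|R_i|$ with $f_i\ge k/2$. Because $k\ge 3$, the only divisors of $k$ that are at least $k/2$ are $k$ itself and, when $k$ is even, $k/2$; in particular $f_i\ge k/2\ge 3/2$, so $f_i\ge 2$ and $R_i\cap C$ is nontrivial. Writing $C_f$ for the unique subgroup of $C$ of order $f$, we thus have $R_i\cap C=C_{f_i}$ with $f_i\in\{k,k/2\}$ for every $i$.

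Next I would intersect. Using that subgroups of $C$ are nested according to divisibility, $\bigcap_{i}(R_i\cap C)=C_g$, where $g=\gcd(f_1,\dots,f_t)$. As each $f_i\in\{k,k/2\}$ and $k/2\mid k$, we get $g=\min_i f_i\in\{k,k/2\}$, whence $g\ge k/2\ge 2$. Therefore $C_g$ is a nontrivial subgroup of $C$, hence normal in $H$, and $C_g=\bigl(\bigcap_i R_i\bigr)\cap C\le\bigcap_i R_i$. This exhibits the required nontrivial normal subgroup of $H$ inside $R_1\cap\cdots\cap R_t$.

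I do not expect a serious obstacle: the statement is structural, and its entire content is the bookkeeping that forces $f_i\in\{k,k/2\}$. The one point that genuinely needs the hypothesis $|R_i|\ge 3$ is the nontriviality of the rotation parts: for $k=2$ a subgroup can be a single reflection, whose rotation part is trivial, and two distinct reflections meet only in the identity and contain no nontrivial normal subgroup of $H$. Thus the bound $\ge 3$ is sharp and is used precisely at the step $f_i\ge k/2\ge 3/2$. The only other thing to watch is the degenerate small cases ($|H|\le 4$), which are immediate, since then any $R_i$ with $|R_i|\ge 3$ is already all of $H$.
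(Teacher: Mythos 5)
Your proof is correct and takes essentially the same approach as the paper's: both reduce to the cyclic rotation subgroup, observing that each $R_i$ meets it in a subgroup of order $|R_i|$ or $|R_i|/2\ge 2$ (the paper by listing the two possible shapes of a subgroup of order $m$ in a dihedral group, you via the index-$2$ argument), so the common intersection contains a nontrivial subgroup of $C=\langle a\rangle$, which is automatically normal in $H$. One harmless slip: the subgroup lattice of a cyclic group is \emph{not} totally ordered in general (e.g.\ $\ZZ_6$ has incomparable subgroups of orders $2$ and $3$); what your argument actually uses---that the subgroups of $C$ of orders $f_i\in\{k,k/2\}$ are nested, so their intersection is $C_{\min_i f_i}$ of order at least $2$---is true and suffices.
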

\proof Suppose $H=\l a\r:\l b\r\cong\D_{2n}$ with $o(a)=n, o(b)=2$ and $a^b=a^{-1}$ and $|R_1|=m$.
If $m=2n$, nothing need to prove. Assume $m$ is a proper divisor of $2n$.
Notice that each subgroup with order $m$ of $H$ has the form:
$$\l a^{n\over m}\r~~with~~m\mid n~~,~~or ~~\l a^{2n\over m}\r:\l a^jb\r~~with~~m~~even~~and~~j~~an~~integer,$$
we  derive that $\l a^{2n\over m}\r\ne 1$ is normal in $H$ and contained in $R_1\cap\cdots\cap R_t$.\qed

A \emph{Singer cycle} of $\GL_d(q)$ is an element of order $q^d-1$, and the subgroup generated by a Singer cycle  is called a \emph{Singer subgroup}.
The following property on Singer subgroups is well-known, see Derek Holt on the Stack Exchange website.
We include a proof here as it is not lengthy.

\begin{proposition}
Any two Singer subgroups of $\GL_d(q)$ are conjugate.
\end{proposition}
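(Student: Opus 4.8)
The plan is to show that any Singer subgroup of $\GL_d(q)$ arises from the multiplicative group of the field $\FF_{q^d}$, and that all such realizations are related by conjugation. The key observation is that a Singer cycle has order $q^d-1$, which forces its action on $V=\FF_q^d$ to endow $V$ with the structure of a one-dimensional $\FF_{q^d}$-vector space.

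First I would fix a Singer cycle $g$ of order $q^d - 1$ and consider the $\FF_q$-algebra $A=\FF_q[g]\subseteq\M_d(q)$ generated by $g$. Since $g$ has order $q^d-1$, its multiplicative order equals $|\FF_{q^d}^\times|$; I would argue that the minimal polynomial of $g$ over $\FF_q$ has degree $d$ and is irreducible, so that $A\cong\FF_{q^d}$. (The degree cannot be smaller than $d$, for an element whose minimal polynomial has degree $e<d$ lies in a subalgebra isomorphic to a product of fields of degrees dividing $e$, and no such element can have order as large as $q^d-1$ unless $e=d$; conversely the degree is at most $d$ since $A$ embeds in $\M_d(q)$.) This exhibits $V$ as a module over the field $A\cong\FF_{q^d}$, necessarily one-dimensional by a dimension count, and identifies $\l g\r$ with the image of $\FF_{q^d}^\times$ acting by scalar multiplication.

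Next I would use this field structure to pin down conjugacy. Given two Singer subgroups $\l g_1\r$ and $\l g_2\r$, the previous step makes $V$ into a one-dimensional vector space over each of the two field copies $A_1\cong A_2\cong\FF_{q^d}$. Choosing any $\FF_q$-basis adapted to each structure and transporting one to the other yields an $\FF_q$-linear isomorphism $P\in\GL_d(q)$ carrying the $A_1$-structure to the $A_2$-structure; concretely, $P$ intertwines the scalar action of a generator of $A_1^\times$ with that of $A_2^\times$, so $P\l g_1\r P^{-1}=\l g_2\r$ after possibly replacing $g_2$ by a generator of the same cyclic group. The point is that a one-dimensional vector space over a fixed finite field is unique up to isomorphism, and any two embeddings $\FF_{q^d}\hookrightarrow\M_d(q)$ are $\GL_d(q)$-conjugate by the Skolem--Noether theorem (or, elementarily, because both give $V$ the same module structure up to a field automorphism, which can be absorbed into the choice of generator).

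The main obstacle I expect is handling the field automorphisms of $\FF_{q^d}$ cleanly: two embeddings of $\FF_{q^d}$ into $\M_d(q)$ need not agree even after conjugation as \emph{algebra} maps, since they may differ by a Galois twist $x\mapsto x^{q^i}$. However, this is harmless for the statement about \emph{subgroups}, because such a twist permutes the generators of the cyclic group $\FF_{q^d}^\times$ and hence fixes the subgroup $\l g\r$ setwise; so I must be careful to phrase the conjugation as an equality of subgroups rather than of chosen generators. The cleanest route is to invoke Skolem--Noether to conjugate the two field subalgebras $A_1,A_2$ of $\M_d(q)$ into coincidence, and then observe that the resulting conjugation automatically matches the two Singer subgroups as the unique subgroup of order $q^d-1$ in the common field.
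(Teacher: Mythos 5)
Your proof is correct, but it takes a genuinely different route from the paper's. The paper picks a Zsigmondy prime $r$ dividing $q^d-1$ but no $q^i-1$ with $i<d$, observes that a Singer subgroup $H$ then contains a full Sylow $r$-subgroup $R$ of $\GL_d(q)$ acting irreducibly on $V$, uses Schur's lemma plus finiteness (Wedderburn) to show that $\mathbf{C}_{\GL_d(q)}(R)$ is cyclic, acts regularly on the nonzero vectors, and hence equals $H$, and finally transfers Sylow conjugacy of the subgroups $R$ to conjugacy of the Singer subgroups. This forces separate treatment of the Zsigmondy exceptions: $d=2$ with $q+1$ a $2$-power (handled via an irreducible Sylow $2$-subgroup) and $(d,q)=(6,2)$ (handled by a {\sc Magma} computation). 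Your argument instead identifies the subalgebra $\FF_q[g]$ generated by a Singer cycle $g$ with the field $\FF_{q^d}$, so that $V$ becomes a one-dimensional $\FF_{q^d}$-space and $\langle g\rangle$ becomes the full unit group of the subalgebra, and then invokes Skolem--Noether (or an elementary transport-of-structure argument) to conjugate the two field subalgebras, hence the two Singer subgroups, into coincidence; your remark that a possible Galois twist only permutes generators and so is harmless at the level of subgroups is exactly the right care. What your approach buys is uniformity: no exceptional cases and no computer check. What the paper's buys is that it stays within Sylow theory and basic representation theory, never touching central simple algebras --- although your transport-of-structure fallback removes even that dependence.

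One point to tighten: your parenthetical justification that the minimal polynomial of $g$ is irreducible of degree $d$ should begin by noting that $g$ satisfies the \emph{separable} polynomial $x^{q^d-1}-1$ (since $q^d-1$ is coprime to the characteristic), so that $\FF_q[g]$ is a product of fields $\FF_{q^{d_1}}\times\cdots\times\FF_{q^{d_k}}$ with $d_1+\cdots+d_k\le d$; the component degrees $d_i$ need not divide the degree of the minimal polynomial, as your wording suggests. The estimate still goes through: the maximal order of a unit in such a product is $\mathrm{lcm}_i(q^{d_i}-1)$, which is strictly less than $q^d-1$ unless $k=1$ and $d_1=d$, giving irreducibility and degree $d$ as you claim.
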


\begin{proof}
Let $V=\mathbb{F}_q^d$ and $G=\GL(V)=\GL_d(q)$, and let $H$ be a Singer subgroup  of $G$.
Since $H$ is cyclic and of order $ q^d-1$, it acts regularly on the non-zero vectors of $V$.

Assume that neither $(d,q)=(6,2)$ nor $d=2$ and $q+1$ is a $2$-power.
Then by a well known result of Zsigmondy, there is a prime $r$ that divides $q^d-1$, but does not divide $q^i-1$ for any $i<d$.
Then $H$ contains a Sylow $r$-subgroup $R$ of $G$ and $H\leq \mathbf{C}_G(R)$.
Since $R$ acts irreducibly on $V$, by Schur's lemma, $\mathrm{End}_{\mathbb{F}_q R}(V)$ is a  division ring.
Moreover, $\mathrm{End}_{\mathbb{F}_q R}(V)$ is a field as it is finite.
Notice that $ \mathbf{C}_G(R)$ is a subgroup of the multiplication group of $\mathrm{End}_{\mathbb{F}_q R}(V)$.
Thus, $ \mathbf{C}_G(R)$  is  cyclic.
Then $ \mathbf{C}_G(R)$ acts regularly on the non-zero vectors of $V$, which implies that $\mathbf{C}_G(R)=H$.
Now it follows from Sylow's theorem that any two Singer  subgroups of $\GL_d(q)$ are conjugate.

Assume that $d=2$ and $q+1$ is a $2$-power. Then $H$ contains an irreducible Sylow $2$-subgroup $R$ of $G$. The remaining argument follows a similar pattern as described above.
Now, let's consider the case where $(d,q)=(6,2)$. By computation in {\sc Magma}~\cite{Magma}, $\GL_6(2)$ has only one conjugacy class of cyclic subgroups of order $63$. \hfill\qed
\end{proof}

We know that $\mathbb{F}_{q^d} $ can be viewed as a $d$-dimensional vector space over $\mathbb{F}_q$.
The multiplication of $\mathbb{F}_{q^d}^{*}$ on $\mathbb{F}_{q^d}$ are $\mathbb{F}_q$-linear transformation of $\mathbb{F}_{q^d}$, and this induces an homomorphism $ \psi: \mathbb{F}_{q^d}^{*} \to \GL_d(q)$.
The group $\psi(\mathbb{F}_{q^d}^{*} )\cong \GL_1(q^d) $ is exactly a Singer subgroup of $\GL_d(q)$.
Notice that $\psi(\mathbb{F}_{q}^{*} )$ is exactly the center $Z$ of $\GL_d(q)$, and hence $Z $ is contained in any Singer subgroup.
A Singer subgroup of $\PGL_d(q)$ is the projective image of a Singer subgroup of $\GL_d(q)$.

View $\GL_1(q^d)$ as a Singer subgroup of $\GL_n(q)$.
By Huppert~\cite[p.187, Satz~7.3]{Huppert1967},  $\N_{\GL_d(q)}(\GL_1(q^d))= \GammaL_1(q)\cong \ZZ_{q^d-1}:\ZZ_d$, where $\ZZ_d$ is the  Galois group $\mathrm{Gal}(\mathbb{F}_{q^d}/\mathbb{F}_q)$.
Let $q=p^e$ where $p$ is a prime and $e$ a positive integer.
Then $\GammaL_d(q)=\GL_d(q):\ZZ_e$, where $\ZZ_e=\mathrm{Gal}(\mathbb{F}_{p^e}/\mathbb{F}_p)$.
Moreover, $\N_{\GammaL_d(q)}(\GL_1(q^d))=\GammaL_1(p^{de})=\ZZ_{q^d-1}:\ZZ_{de}$ (see for example~\cite[Proposition~2.2]{PS2016}).
Let $z$ be an element of $\GL_1(q^d)$ such that the order of $z$ is not a divisor of $q^m-1$ for any $1\leq m<n$ and $m\mid  n$.
Then  $\N_{\GL_n(q)}(\langle z\rangle)=\N_{\GL_n(q)}(\GL_1(q^d))$ by~\cite[p.187, Satz~7.3]{Huppert1967}.
Since $\langle z\rangle $ is characteristic in $\GL_1(q^d)$, it follows from $\N_{\GammaL_d(q)}(\GL_1(q^d))=\GammaL_1(p^{de})$ that $\N_{\GammaL_d(q)}(\langle z\rangle) \geq \GammaL_1(p^{de})$.
Notice that $\mathbb{Z}_p^{de}:\N_{\GammaL_d(q)}(\langle z\rangle)\leq \AGL_{de}(p)$ is an affine $2$-transitive permutation group of degree $p^{de}$.
By checking the list of affine  $2$-transitive permutation groups (see for example~\cite{Liebeck-affine}),
we obtain that $\mathbb{Z}_p^{de}:\N_{\GammaL_d(q)}(\langle z\rangle) \leq \mathbb{Z}_p^{de}:\GammaL_1(p^{de})$, and hence $\N_{\GammaL_n(q)}(\langle z\rangle) = \GammaL_1(p^{de})$.


\begin{lemma} \label{lm:mdq}
Let $d\geq 2$ be an integer and $q=p^e$ a prime power.
Let $G= \mathrm{\Gamma L}_1(p^{de})=\langle x\rangle{:}\,\langle y\rangle$ with $\langle x\rangle\cong \ZZ_{p^{de}-1}$,
$\langle y\rangle\cong \ZZ_{de} $ and $ x^y= x^p$, and let $Z=\langle x^{(q^{d}-1)/(q-1)}\rangle$.
\begin{itemize}
\item[(a)] If $d\geq 3$ and $G$ has an element $g$ such that $x^m(x^m)^g \in Z$ for some  $m\in \{1,2,4\}$, then $d=4$, $q=3$ and $m=4$.
\item[(b)] If $d=2$, $q\geq 5$ is odd and $G$ has an element $g$ such that $ x^m(x^m)^g \in Z$ for some $m\in \{1,2\}$, then $g=x^iy^{de/2}$ for some $0\leq i \leq p^{de}-1$.
\end{itemize}
\end{lemma}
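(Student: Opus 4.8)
The plan is to convert the group-theoretic hypothesis into a divisibility statement about integers and then resolve it by elementary number theory. Every element of $G$ can be written as $g=x^iy^j$ with $0\le i\le p^{de}-2$ and $0\le j\le de-1$. Since $\langle x\rangle$ is abelian and $x^y=x^p$, conjugation by $g$ gives $(x^m)^g=y^{-j}x^my^j=x^{mp^j}$, so that $x^m(x^m)^g=x^{m(1+p^j)}$. Writing $s=(q^d-1)/(q-1)$, we have $|x|=q^d-1$ and $Z=\langle x^{s}\rangle$, whence $x^m(x^m)^g\in Z$ if and only if $s\mid m(1+p^j)$. (The exponent $i$ never enters, which is precisely why in (b) only the $y$-part of $g$ is constrained.) Using $a\mid bc\iff a/\gcd(a,c)\mid b$, I would rewrite the condition as $s'\mid 1+p^j$, where $s'=s/\gcd(s,m)$; thus $s'\ge s/4$ in (a) and, more sharply, $s'\ge s/2$ in (b) since there $m\le 2$.

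The engine of the proof is a single observation: $1+p^j\mid p^{2j}-1$ and $s'\mid s\mid p^{de}-1$, so by the gcd law $\gcd(p^a-1,p^b-1)=p^{\gcd(a,b)}-1$ we obtain
\[
s'\mid\gcd(p^{2j}-1,\,p^{de}-1)=p^{\gcd(2j,de)}-1.
\]
Set $g_0=\gcd(2j,de)$; everything now turns on whether $g_0=de$ or $g_0<de$. If $g_0=de$ then $de\mid 2j$, forcing $j\in\{0,de/2\}$. The value $j=0$ gives $s\mid 2m\le 8$, impossible since $s\ge 1+q+q^2\ge 7$ for $d\ge 3$ (resp. $s=q+1\ge 6$ for $d=2$). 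For $j=de/2$ (so $de$ is even) one has $p^{de/2}=q^{d/2}$, and when $d$ is even the factorisation $s=s_1(q^{d/2}+1)$ with $s_1=(q^{d/2}-1)/(q-1)$ reduces $s\mid m(q^{d/2}+1)$ to $s_1\mid m$. For $d\ge 3$ this forces $s_1\in\{3,4\}$, hence $d=4$ and $q\in\{2,3\}$, and only $(q,d,m)=(3,4,4)$ survives, which is (a); for $d=2$ the same branch has $s_1=1$, so $s\mid m(q+1)$ holds automatically and yields exactly the asserted $j=de/2=e$, i.e. $g=x^iy^{de/2}$, which is (b). The odd-$d$ instances of $g_0=de$ are dispatched by the crude estimate that a solution needs $s\le m(1+q^{d/2})$ while $s>q^{d-1}$, forcing $q^{d/2-1}<8$ and leaving only finitely many small $q$, all checked to fail.

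If instead $g_0<de$, then $g_0$ is a proper divisor of $de$, so $g_0\le de/2$, and $s/4\le s'\le p^{g_0}-1<p^{de/2}$ while $s>q^{d-1}=p^{e(d-1)}$; hence $p^{e(d-1)-g_0}<4$, giving $g_0\ge e(d-1)-1$. Together with $g_0\le de/2$ this squeezes $(e,d)$ into a very short list ($d\le 2+2/e$ when $d\ge 3$), each member eliminated by comparing $s'$ with $p^{g_0}-1$ uniformly in $p$. For $d=2$ the sharper bound $s'\ge s/2$ forces $g_0=e$, whence $s'$ divides both $p^e-1$ and $p^e+1$, so $s'\mid\gcd(p^e-1,p^e+1)=2$ (here $q$ odd is essential), contradicting $s'\ge(q+1)/2\ge 3$. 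This closes all cases and leaves exactly the stated conclusions.

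The reduction and the gcd observation are quick; the real work, and the main obstacle, is the disciplined case analysis that guarantees no solution is missed and no spurious one is admitted. The delicate points are the odd-$d$ sub-case of $g_0=de$ and, above all, the middle case $g_0<de$: there the exponent $p$ remains a free parameter, so each of the finitely many surviving pairs $(e,d)$ must be excluded by an argument valid for all $p$ at once rather than by enumeration. Confirming that the single triple $(q,d,m)=(3,4,4)$ is the unique survivor of the even-$d$ branch, and that $j=e$ is forced when $d=2$, is where the content of the lemma resides.
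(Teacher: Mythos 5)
Your proof is correct, and it takes a genuinely different route from the paper's. Both arguments begin with the same forced reduction --- writing $g=x^iy^j$ so that $x^m(x^m)^g\in Z$ becomes the divisibility $(q^d-1)/(q-1)\mid m(1+p^j)$ --- but then they diverge. The paper turns this into the equation $m(1+p^{de/s})=t(q^d-1)/(q-1)$ with $t\leq q-1$ and manipulates inequalities until reaching $q^{d/2}+3-4q\leq 0$, which eliminates all but small $(d,q)$; the surviving small cases ($d=3$ with $q<16$, $(d,q)=(4,2)$, and $(d,q,m)=(4,3,1),(4,3,2)$) are then settled by {\sc Magma}. Your engine is instead the identity $\gcd(p^a-1,p^b-1)=p^{\gcd(a,b)}-1$, applied via $s'\mid 1+p^j\mid p^{2j}-1$, followed by the case split on $g_0=\gcd(2j,de)$. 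This buys two things: the argument is computer-free and uniform in $p$, and it honestly treats every exponent $j\in\{0,\dots,de-1\}$, whereas the paper normalizes $g=y^{de/s}$ with $s\geq 2$ and $s\mid de$ --- a normalization legitimate only after the unstated reduction $j\mapsto de-j$, and even that yields $j\leq de/2$ rather than $j\mid de$ (the paper's estimates fortunately use only the former). Three compressed spots in your write-up should be displayed, though each is a one-line verification: (i) in the $j=0$ subcase of (a), $s\geq 7$ together with $s\mid 2m\leq 8$ still allows $s=8$, which must be excluded because $(q^d-1)/(q-1)=8$ has no solution with $d\geq 3$; (ii) in the odd-$d$ branch of $g_0=de$, the survivors are $d=3$ with $q\in\{4,9,16,25,49\}$ (since $e$ must be even), and in each case $q^2+q+1\nmid m(1+q^{3/2})$; (iii) in the $g_0<de$ case of (a), the pairs $(e,d)=(1,3)$ and $(2,3)$ die because there $s$ is odd, hence $s'=s>p^{g_0}-1$ with $g_0$ forced to equal $1$ resp.\ $2$, while $(e,d)=(1,4)$ dies because $(p+1)(p^2+1)/4\leq p^2-1$ gives $p^2-4p+5\leq 0$, impossible. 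With these details inserted, your proof is complete and arguably tighter than the published one.
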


\begin{proof}
(a) Computation in {\sc Magma}~\cite{Magma} shows that such an element $g$  exists for $(d,q,m)=(4,3,4)$,
but does not exist for $d=3$ and $q<16$, or $(d,q)=(4,2)$, or $(d,q,m)=(4,3,1)$ or $(4,3,2)$.
We thus assume next that
\begin{equation}\label{eq:dq}
(d,q)\notin \{(3,2),(3,3),(3,4),(3,5),(3,7),(3,8),(3,9),(3,11),(3,13),(4,2),(4,3)\}.
\end{equation}

We may let $g=y^{de/s}$ for some positive integer $s $ such that $s \geq 2$ and $s$ divides $de$.
From  $x^m(x^m)^g\in Z$, we conclude that
\[
x^m(x^m)^g= x^m(x^m)^{y^{\frac{de}{s}}}=x^m  (x^{m \cdot p^{\frac{de}{s}}})=x^{m(1+p^{\frac{de}{s}})} \in Z.
\]
Since $\mathrm{GL}_1(q^d)=\langle x\rangle$ has order $q^d-1$ and the subgroup $Z$ has order $q-1$, we have $Z=\langle x^{(q^d-1)/(q-1)}\rangle$.
From~\eqref{eq:dq} we conclude $p^{de/2}=q^{d/2}> 5$, and so
 \[
(p^{de}-1)-m(1+p^{\frac{de}{s}})\geq (p^{de}-1)-4(1+p^{\frac{de}{2}})=(p^{\frac{de}{2}}-2)^2-9> (5-2)^2-9>0.
\]
This together with $x^{m(1+p^{de/s })} \in Z=\langle x^{(q^d-1)/(q-1)}\rangle$  imply that  there is some $t\in \{1,\ldots,q-1\}$ such that $ m(1+p^{de/s})=t(q^d-1)/(q-1)=t(p^{de}-1)/(p^{e}-1)$.
In particular, we have $4(1+p^{\frac{de}{2}})\geq (p^{de}-1)/(p^{e}-1)$,  which leads to
\[
0\leq 4(1+p^{\frac{de}{2}})(p^{e}-1)-(p^{de}-1)= 4(1+p^{\frac{de}{2}})(p^{e}-1)-( p^{\frac{de}{2}}+1)( p^{\frac{de}{2}}-1)=( p^{\frac{de}{2}}+1)(4p^e-3-p^{\frac{de}{2}}).
\]
Therefore,   $0 \leq  4p^e-3-p^{\frac{de}{2}} $, which is equal to
\begin{equation}\label{eq:4q-3-qd/2}
  q^{\frac{d}{2}}+3-4q \leq 0.
\end{equation}
If $d\leq 3$, then $q\geq 16$ by~\eqref{eq:dq}, and so
$q^{ {d}/{2}}+3-4q \geq q(q^{1/2}-4)+3\geq 3 $, a contradiction.
If $d=4$, then $q\geq 4$ by~\eqref{eq:dq}, which implies $q^{{d}/{2}}+3-4q =(q-2)^2-1>0$, again a contradiction.
If $d\geq 5$, then
\[
q^{\frac{d}{2}}+3-4q =q^2(q^{\frac{d-4}{2}}-1)+ (q-2)^2-1 >2^2(2^{\frac{1}{2}}-1)-1>0,
\]
which is again a contradiction.

(b) Now $Z=\langle x^{q+1}\rangle$.
Let $g=x^iy^{2e/s}$ for some $s\geq 2$ such that $s $ divides $2e$.
Then from  $x^m(x^m)^g\in Z$  we conclude that  $x^{m(1+p^{ {2e}/{s}})} \in Z$, that is, $x^{m(1+q^{ 2/s})} \in Z$.

Assume that $m=1$.
Then from $x^{ 1+q^{ 2/s} }  \in Z=\langle x^{q+1}\rangle$ we see that $s \in \{1,2\}$.
Further, if $s=1$, then $x^{ 1+q^{ 2/s} }=x^{q^2+1}=x^2$ (as $ \langle x\rangle =\ZZ_{q^2-1}$), which is not in $Z=\langle x^{q+1}\rangle$, a contradiction.
Therefore, $s=2$, as required.

Assume now that $m=2$.  Since $q=p^e\geq 5$ and $s\geq 2$, we have
\[
q^2-1-2(1+q^{\frac{2}{s}})\geq q^2-1-2(1+q)=(q-1)^2-4>0.
\]
Thus, from $x^{ 2(1+q^{ 2/s})}  \in Z=\langle x^{q+1}\rangle$ we conclude that
\[ 2(1+q^{\frac{2}{s}})=t(q+1) \text{ for some }t\in \{0,1,\ldots,q-1\}.\]
The case $t\geq 3$ is clearly impossible.
If $t=1$, then $s\neq 2$ and so $s\geq 3$, and by computation we see that $q+1>2(1+q^{2/3})$ whenever $q\geq 11$, and $q+1\neq 2(1+q^{2/s})$ whenever $q\in \{5,7,9\}$ and $s\geq 3$.
Therefore $t=2$ and so $s=2$.
~\qed
\end{proof}

\section{Proof of Theorem~\ref{Thm-1}}

We will prove Theorem~\ref{Thm-1} in this section. First we treat the primitive case.

\begin{lemma}\label{PrimBiReg}
Let $G$ be a primitive permutation group  on a set $\Ome$ containing a bi-regular dihedral subgroup $H\cong \D_{2n}$. Then part $(1)$
of Theorem~$\ref{Thm-1}$ holds.
\end{lemma}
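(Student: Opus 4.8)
## Proof Proposal for Lemma~\ref{PrimBiReg}

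\textbf{Overall strategy.}
The plan is to exploit the structure of a primitive group $G$ together with the very restrictive presence of a bi-regular dihedral subgroup $H\cong\D_{2n}$. The key observation is that since $H$ is semiregular with exactly two orbits, each of size $2n$, the degree is $|\Ome|=4n$, and any element of $H$ acts on $\Ome$ with cycle structure dictated by its action on the two $H$-orbits. In particular, the reflection elements and the rotation $a$ of order $n$ have tightly controlled cycle lengths. This means we can bring the full classification of primitive groups containing an element with few cycles (Theorem~\ref{Less-4Cyc}) to bear, by choosing a suitable element $g\in H$ whose cycle count on $\Ome$ is small.

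\textbf{First steps.}
First I would identify an element of $H$ with at most four cycles on $\Ome$. Since $H$ is bi-regular, the rotation subgroup $\langle a\rangle\cong\ZZ_n$ is also semiregular with two orbits, so $a$ has exactly two cycles (each of length $n$) on each $H$-orbit, giving a natural candidate; and involutions in $H$ have cycle type built from $2$-cycles and possibly fixed points, but by semiregularity they are fixed-point-free, so an involution is a product of $2n$ transpositions. The cleanest choice is the rotation $a$, which has exactly $N\le 4$ cycles precisely when $n$ is small, or one should instead track the subgroup $\langle a\rangle$ as a bi-regular \emph{cyclic} subgroup. I would then invoke Theorem~\ref{Less-4Cyc} to pin down the socle type: either $\soc(G)=\A_m^\ell$ in product action, or $\PSL_d(q)^\ell$ in product action, or one of the finitely many possibilities in the tables of~\cite{GMPS16}, or $\soc(G)$ elementary abelian (the affine case).

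\textbf{Case analysis.}
The bulk of the argument is then a case-by-case elimination guided by the requirement that $H\cong\D_{2n}$ embed \emph{bi-regularly}. For the affine case $\soc(G)=\ZZ_p^k$ elementary abelian, the degree $4n=p^k$ forces $p=2$, and I would match the dihedral subgroup against the affine $2$-transitive groups (as in the discussion preceding Lemma~\ref{lm:mdq}), producing the explicit list in part~(1)(i): the various $\AGL$ and $\ZZ_2^4{:}\mathrm{SO}_4^\pm(2)$ examples. For $\soc(G)=\A_m$ in its natural action, semiregularity of $H$ together with $|\Ome|=4n$ should force $m=4n$ and yield case~(1)(ii). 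The $\PSL_2(q)$ action of degree $q+1$ on the projective line, where a dihedral subgroup $\D_{q+1}$ arises from a non-split torus normalizer, gives case~(1)(iii) under $q\equiv3\pmod4$ (the congruence ensuring the torus has the right parity for a bi-regular, rather than regular, embedding). The sporadic degrees for $\M_{12}$ and $\M_{24}$ in the tables of~\cite{GMPS16} should supply cases~(1)(iv) and~(1)(v), and the remaining table entries and the product-action cases with $\ell\ge2$ must be ruled out, since a dihedral group cannot act semiregularly with two orbits in a genuine product action except degenerately.

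\textbf{Main obstacle.}
The hard part will be the product-action and the affine cases. In product action the cycle-length data from Tables 1 and 2 of~\cite{GMPS16} must be reconciled with the rigid cycle structure forced by a dihedral bi-regular subgroup, and showing that no dihedral group of order $2n$ can sit bi-regularly inside the larger product-action degrees requires careful control of how rotations and reflections distribute across the $\ell$ coordinates. The affine case is delicate because $\AGL_k(2)$ and its subgroups contain many dihedral subgroups, and one must verify \emph{semiregularity with exactly two orbits}---not mere regularity---which is precisely the distinction between Theorem~\ref{pri-d-gps} and the present lemma; Lemma~\ref{Subgroup} on intersections of equal-order subgroups of a dihedral group will be the main tool for controlling point stabilizers and confirming that the only surviving affine examples are those listed.
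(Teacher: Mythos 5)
Your overall strategy --- apply Theorem~\ref{Less-4Cyc} to a suitable element of $H$ --- is exactly the paper's, but your proposal has two genuine gaps. First, the fact that makes the strategy work at all is stated incorrectly. Since $[H:\langle a\rangle]=2$ and $H$ is regular on each of its two orbits, the rotation $a$ has exactly two cycles of length $n$ inside each $H$-orbit, hence exactly four cycles of length $n$ on $\Ome$, \emph{for every} $n$. Your text first counts this correctly, but then asserts that $\langle a\rangle$ is ``semiregular with two orbits'' (it has four), calls it a ``bi-regular cyclic subgroup'' (it is not), and claims $a$ has $N\le 4$ cycles ``precisely when $n$ is small''. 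Under that last claim the appeal to Theorem~\ref{Less-4Cyc} would only cover bounded $n$ and the proof collapses; the whole point is that $(n,n,n,n)$ is a four-cycle pattern regardless of $n$.

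Second, and more seriously, you misplace the main difficulty. The entries of \cite[Table~2]{GMPS16} that survive in case (b) with $\ell=1$ are $\PSL_d(q)$ in its \emph{natural} action on projective space with $4\mid d$, $q\equiv 1\pmod 4$, or $2\mid d$, $q\equiv 3\pmod 4$, and $n=(q^d-1)/(4(q-1))$; these are not product actions, so your proposed dismissal (``a dihedral group cannot act semiregularly with two orbits in a genuine product action'') does not touch them. Ruling out $d\ge 4$ here is the technical core of the paper's proof: one shows that $a$ lies in a Singer subgroup of $\PGL_d(q)$, so that $b$ must come from $\N_{\GammaL_d(q)}(\langle x^4\rangle)\cong\GammaL_1(q^d){:}\ZZ_{de}$, and the relation $a^b=a^{-1}$ becomes $x^4(x^4)^g\in Z$; Lemma~\ref{lm:mdq}(a) then leaves only $(d,q)=(4,3)$, which is eliminated by computation, so that only $d=2$ with $q\equiv 3\pmod 4$ survives, giving part (1)(iii). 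Nothing in your proposal plays this role. Two smaller points: in the affine case, matching against affine $2$-transitive groups cannot produce the rank-$3$ examples $\ZZ_2^4{:}\mathrm{SO}_4^{\pm}(2)$ of part (1)(i) (the paper instead uses \cite[Tables~5--7]{GMPS15} together with computation); and Lemma~\ref{Subgroup} is not the relevant tool here --- it enters only in the imprimitive case of Theorem~\ref{Thm-1}, not in this lemma.
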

\proof
Write $H=\l a\r:\l b\r$ with $o(a)=n$, $o(b)=2$ and $a^b=a^{-1}$.
Since $H$ is bi-regular on $\Ome$, we see  that the permutation $a$ has cycle lengths $(n,n,n,n)$ on $\Ome$.
Hence $G$ satisfies one of parts (a)-(d) of Theorem~\ref{Less-4Cyc}.

Suppose $\soc(G)=\A_m^{\ell}$, as in part (a) of Theorem~\ref{Less-4Cyc}. From~\cite[Table~1]{GMPS16} we see that either
$\ell=1$ and $m=4n$ (Line~1 of Table~1 in \cite{GMPS16}), or $(m,\ell)=(8,2)$ and $n=16$ (Line~10 of~Table~1 in \cite{GMPS16}).
The former case gives rise to (1)(ii) of Theorem~$\ref{Thm-1}$.
For the latter case,  $G$ is primitive with socle $\A_8^2$,
so $G\le \S_8\wr\ZZ_2$, but a computation in Magma~\cite{Magma} shows that $\S_8\wr\ZZ_2$
has no bi-regular subgroups isomorphic to $\D_{32}$, a contradiction.

Suppose $\soc(G)=\PSL_d(q)^{\ell}$, as in part (b) of Theorem~\ref{Less-4Cyc}.
Checking \cite[Table~2]{GMPS16} we see that the possible candidates are as following:
\begin{itemize}
\item[(b.1)] $\ell=1$, $4\mid d$, $q \equiv 1 \pmod{4}$ and $n=(q^d-1)/(4(q-1))$  (Line~5 of~Table~2);
\item[(b.2)] $\ell=1$, $2 \mid d$, $q \equiv 3 \pmod{4}$ and $n=(q^d-1)/(4(q-1))$ (Line~6 of~Table~2);
\item[(b.3)] $\ell=2$, $d=3$, $q=2$ and $n=16$ (Line~30 of~Table~2).
\end{itemize}

Assume (b.1) or (b.2) occurs.
If $d\geq 4$, then $G\leq \Aut(\PSL_d(q))\cong\PGammaL_d(q)$,
and the action of $G$ on $\Ome$ is the natural action on the projective space $\PG_{d-1}(q)$.
If $a\in \PGammaL_d(q)\setminus\PGL_d(q)$, by \cite[Lemmas~4.2]{GMPS16},
$d,q$ and the cycle lengths of $a$ are as in Lines 10--17, 19--20, 23 of \cite[Table~2]{GMPS16}.
For Line 23, $a$ has cycle lengths $(10,15,30,30)$,
and for the other lines $d=2$, a contradiction.
Hence $a \in \PGL_d(q)$ lies in a Singer subgroup of $\PGL_d(q)$ of order $(q^d-1)/(q-1)$, refer to \cite[Lemma~4.4]{GMPS16}.
Notice that the order of $a$ is $(q^d-1)/4(q-1)$.
Let $\l x\r=\GL_1(q^d) $ be the Singer subgroup of $\GL_d(q)$ and let $Z=\langle x^{(q^d-1)/(q-1)}\rangle$ be the center of $\GL_d(q)$.
Identify $\PGammaL_d(q)$ with $\GammaL_d(q)/Z$.
Then we may identify  $a$ with $x^4Z$.
Write $q=p^e$ for some prime $p$ and positive integer $e$.
Notice that $\N_{\GammaL_d(q)}(\langle x^4 \rangle)=\N_{\GammaL_d(q)}(\GL_1(q^d))=\GL_1(q^d){:}\ZZ_{de}$.
Hence,  $b=gZ$ for some $g\in \GL_1(q^d){:}\ZZ_{de}$.
Since $a^b=a^{-1}$, we have $x^{4}(x^{4})^g\in Z$.
By Lemma~\ref{lm:mdq}, we  obtain that $d=4$ and $q=3$, and so $G=\PSL_4(3)$ or $\PGL_4(3)$.
However, computation in {\sc Magma}~\cite{Magma} shows that both $\PSL_4(3)$ and $\PGL_4(3)$
acting on the projective space $\PG_3(3)$ of degree $(3^4-1)/(3-1)=40$ have no bi-regular dihedral group $\D_{20}$,
a contradiction. Hence $d=2$ and (b.2) happens, which gives part (1)(ii) of Theorem~\ref{Thm-1}.

For (b.3), $\soc(G)=\PSL_3(2)^2$ and $\PSL_3(2)^2.\ZZ_2\le G\le \Aut(\PSL_3(2)^2)\cong (\PSL_3(2).\ZZ_2)\wr\ZZ_2$,
but computation in {\sc Magma}~\cite{Magma} shows that $G$ has no bi-regular subgroup $\D_{32}$, a contradiction.

Suppose that part (c) of Theorem~\ref{Less-4Cyc} occurs.
Checking the candidates in \cite[Tables~3--5]{GMPS16} we conclude that the possible pairs of $(\soc(G),n)$ are as following:
\[
\begin{split}
& (\A_5^2,15), (\A_8,7),  (\A_9,9),  (\mathrm{M}_{12},3),  (\mathrm{M}_{24},6), (\PSL_2(8),7), (\PSL_2(8),9),(\PSL_2(16),17),  \\
& (\PSU_3(3),7),  (\PSU_4(3),28), (\PSp_6(2),7), (\PSp_6(2),9), (\PSp_4(3),9),    (\PSp_4(3),10) .
\end{split}
\]
For each pair $(\soc(G),n)$ above,  
computation in Magma~\cite{Magma} shows that $G$ has a bi-regular  subgroup $\D_{2n}$ if and only if $(G,n)=(\mathrm{M}_{12},3)$ and $(\mathrm{M}_{24},6)$,
which give rise to part (1)(iv) and (1)(v) of Theorem~\ref{Thm-1}.

Finally suppose part (d) of Theorem~\ref{Less-4Cyc} occurs.
Now $\soc(G)=\ZZ_p^d$ with $p$ a prime is regular on $\Ome$, $n=p^d/4$,
and $G\le\AGL_d(p)$ is described in Theorems 1.3 and 1.4 of \cite{GMPS15}.
According to~\cite[Theorem 1.5]{GMPS15}, the permutation $g$ appears in~\cite[Tables 5--7]{GMPS15}.
From these three tables one can easily conclude that $(p,d)=(3,2)$, $(4,2)$ or $(5,2)$,
computation in {\sc Magma}~\cite{Magma} shows that $G$ has  a bi-regular  subgroup $\D_{2n}$ if and only if
$G$ satisfies one in part (1)(i) of Theorem~\ref{Thm-1}.\qed

To treat the imprimitive case, we need the following permutation group version of \cite[Lemma 3.1(1)]{DGJ22}.

\begin{lemma}\label{Block}
Let $G$ be a transitive permutation group on a set $\Ome$, and let $H\le G$ be a biregular subgroup with two orbits $H_0$ and $H_1$ on $\Ome$.
Let $\BB$ be a $G$-invariant partition of $\Ome$.
Then  either all elements of $\BB$ are subsets of $H_0$ or $H_1$, or $B\cap H_0\ne \emptyset$ and $B\cap H_1\ne\emptyset$.
\end{lemma}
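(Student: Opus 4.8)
The plan is to prove this by a single size comparison, ignoring the dihedral structure of $H$ entirely (it plays no role): the only properties used are that $H$ is semiregular with the two orbits $H_0,H_1$, that $H\le G$, and that $\BB$ is $G$-invariant. Two elementary transitivity facts drive everything. First, since $G$ is transitive on $\Ome$ and $\BB$ is a $G$-invariant partition, $G$ permutes the blocks transitively, so all blocks share a common size, say $|B|=k$ for every $B\in\BB$. Second, since $H\le G$ preserves both $\BB$ and each of its own orbits $H_0$ and $H_1$, the group $H$ permutes among themselves the nonempty \emph{pieces} $B\cap H_0$ (over $B\in\BB$); transitivity of $H$ on $H_0$ then forces all these nonempty pieces to have one common cardinality $c_0$, and symmetrically all nonempty pieces $B\cap H_1$ share a common cardinality $c_1$.

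Next I would record what each type of block contributes to these constants. Every $B\in\BB$ is of exactly one of three types: $B\subseteq H_0$, or $B\subseteq H_1$, or $B$ meets both $H_0$ and $H_1$. A block contained in $H_0$ reads off $c_0=k$; a block contained in $H_1$ gives $c_1=k$; and a block meeting both gives $c_0+c_1=k$, from $|B|=|B\cap H_0|+|B\cap H_1|$. The entire statement now reduces to the single assertion that a block contained in a single orbit cannot coexist with a block meeting both orbits.

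The core step is this incompatibility, where I would place the one short argument. Suppose for contradiction that some block lies inside $H_0$ while some other block $B'$ meets both orbits (a block inside $H_1$ is handled symmetrically). The first block yields $c_0=k$, whereas $B'$ satisfies $k=|B'|=|B'\cap H_0|+|B'\cap H_1|=c_0+c_1=k+c_1$, forcing $c_1=0$; but $B'$ meets $H_1$, so $c_1\ge 1$, a contradiction. Hence no block contained in a single orbit can coexist with a block straddling both, and therefore either every block meets both $H_0$ and $H_1$ (the second alternative), or no block does and then every block is a subset of $H_0$ or of $H_1$ (the first alternative) — exactly the claimed dichotomy. I do not expect a genuine obstacle here; the only point needing care rather than ingenuity is the justification of the two uniform-size claims, and each is an immediate consequence of transitivity (of $G$ on $\BB$ for the blocks, and of $H$ on each orbit for the pieces).
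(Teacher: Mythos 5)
Your proof is correct. A point of comparison worth making explicit: the paper gives no proof of this lemma at all --- it is stated as the ``permutation group version'' of \cite[Lemma 3.1(1)]{DGJ22} and used without further argument --- so your self-contained argument is being measured against a bare citation rather than an in-paper proof. Your argument is sound at every step: transitivity of $G$ on $\Ome$ gives a uniform block size $k$; since $H\le G$ preserves $\BB$ and fixes each of its orbits setwise, $H$ permutes the nonempty pieces $B\cap H_0$ (and likewise $B\cap H_1$), and transitivity of $H$ on $H_0$ together with the pairwise disjointness of the pieces forces a common size $c_0$ (similarly $c_1$); then a block inside $H_0$ gives $c_0=k$, which is incompatible with any block meeting both orbits, since such a block would give $k=c_0+c_1\ge k+1$. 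The dichotomy follows, since the coexistence of a monochromatic block with a straddling block is exactly what was excluded. Two small observations strengthen your remark that the dihedral structure is irrelevant: your proof also never uses semiregularity of $H$ --- transitivity of $H$ on each $H_i$ is automatic because $H_i$ is an $H$-orbit --- so what you have actually proved holds for any subgroup $H\le G$ with exactly two orbits; and the uniform-piece-size claim does need the one-line justification you allude to (map a point of one piece to a point of another by some $h\in H$ and invoke disjointness of pieces to identify the image), which you correctly flag as the only place requiring care. In short, a correct and slightly more general elementary proof of a statement the paper outsources to the literature.
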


\vskip0.1in
For a transitive permutation group $G$ on a set $\Ome$, a $G$-invariant partition $\BB$ of $\Ome$ is called
{\it maximal} if for any block $B\in \BB$ there exists no nontrivial $G$-block $C$ such that $B$ is a proper set $C$.
In this case, the induced group $G^{\BB}$ is primitive.

\begin{lemma}\label{QP-bireg}
Let $G$ be a quasiprimitive but imprimitive permutation group on a set $\Ome$
containing a biregular subgroup $H=\l a\r:\l b\r\cong\D_{2n}$.
Then part $(2)$ of Theorem~\ref{Thm-1} holds.
\end{lemma}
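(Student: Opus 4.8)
The plan is to reduce the imprimitive action to a faithful primitive quotient action, invoke Theorem~\ref{Less-4Cyc}, and then reconstruct the action on $\Ome$. Write $H=\l a\r{:}\l b\r$ with $o(a)=n$, $o(b)=2$ and $a^b=a^{-1}$, so that $|\Ome|=4n$ and $a$ has cycle type $(n,n,n,n)$; within each $H$-orbit $H_0,H_1$ the two $\l a\r$-orbits are interchanged by $b$. Since $G$ is imprimitive I would fix a maximal $G$-invariant partition $\BB$, so that $G^{\BB}$ is primitive. The first observation is that the kernel $K$ of $G$ on $\BB$ is trivial: as $K\trianglelefteq G$ fixes every block setwise it is intransitive, so quasiprimitivity gives $K=1$. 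Hence $G\cong G^{\BB}$ is faithful and primitive on $\BB$, and $H\cong H^{\BB}\cong\D_{2n}$. Because the quotient map $\Ome\to\BB$ is surjective and $\l a\r$-equivariant, $a^{\BB}$ has at most four cycles and still has order $n$; thus $G^{\BB}$ is primitive, contains $H^{\BB}\cong\D_{2n}$, and has an element with at most four cycles, so Theorem~\ref{Less-4Cyc} applies to it.

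Next I would determine how $H$ meets the blocks using Lemma~\ref{Block}. Since $H$ is transitive on $H_0$ and on $H_1$, if some block lies inside $H_0$ then its $H$-orbit partitions $H_0$ and, by disjointness, every block lies inside $H_0$ or inside $H_1$; otherwise every block meets both. This yields two cases: either every block lies inside one $H$-orbit (so $H^{\BB}$ has two orbits on $\BB$ and the block stabiliser $H_B$ is regular on $B$ with $|H_B|=|B|$), or every block meets both $H_0$ and $H_1$ (so $H^{\BB}$ is transitive on $\BB$ with $|H_B|=|B|/2$). In either situation the stabilisers of the blocks in a fixed $H$-orbit are conjugate of equal order, and by Lemma~\ref{Subgroup} their intersection -- the normal core of $H_B$ in $H$ -- is nontrivial once that order is at least $3$. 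In the transitive case this core equals $K=1$, forcing $|B|\le 4$. In the other case $K=1$ is the intersection of the two cores, one per $H$-orbit; these cores lie in $\l a\r$ unless $|B|\ge n$, so either they are nontrivial subgroups of the cyclic group $\l a\r$ and hence meet nontrivially (impossible), or $|B|\ge n$, whence $|\BB|\le 4$ and $|G|\le 24$ and the handful of remaining cases are eliminated directly. Thus $|B|=2$ here. This leaves a short list for $(|B|,H^{\BB})$ and isolates the sub-case in which $H^{\BB}$ is regular on $\BB$, which I would instead feed into Theorem~\ref{pri-d-gps}.

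For each surviving primitive pair $(G^{\BB},H^{\BB})$ produced by Theorem~\ref{Less-4Cyc} or Theorem~\ref{pri-d-gps}, it remains to reconstruct the action on $\Ome$: choose a point stabiliser $G_\a\le G_B$ of index $|B|$ for which $H$ is semiregular with exactly two orbits on $G/G_\a$, and discard the choices giving a primitive action, as these are already recorded in Lemma~\ref{PrimBiReg}. The almost simple and affine candidates of bounded degree, and the sporadic entries of Theorem~\ref{Less-4Cyc}, are settled by direct computation in {\sc Magma}~\cite{Magma} and yield the triples in part~(2)(i). The only infinite family arises from $\soc(G^{\BB})=\PSL_2(q)$ acting on the projective line of degree $q+1$ with $H^{\BB}=\D_{q+1}$ intransitive; its lift $\Ome$ has degree $2(q+1)$, and $G_\a$ is the index-$2$ subgroup $\ZZ_p^e{:}\ZZ_{(q-1)/4}.f$ of the Borel subgroup.

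I expect the main obstacle to be verifying that $H=\D_{q+1}$ really is biregular on this $\Ome$ and extracting the precise arithmetic condition. The elements of the rotation (non-split torus) part of $H$ are automatically fixed-point-free, since their orders divide $(q+1)/2$, which is coprime to both $p$ and $(q-1)/4$; the delicate point is the $(q+1)/2$ reflections in $H$. These are fixed-point-free exactly when the point stabiliser $\ZZ_p^e{:}\ZZ_{(q-1)/4}$ in $\PSL_2(q)$ contains no involution, that is, when $(q-1)/4$ is odd, which is the congruence $q\equiv 5\pmod 8$. Pinning this down requires locating the reflection $b$ inside the normaliser $\GammaL_1(q^2)$ of the Singer torus -- this is where Lemma~\ref{lm:mdq} enters, forcing $b$ to have the appropriate twisted form -- together with the fact that all involutions of $\PSL_2(q)$ are conjugate. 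This analysis fixes the congruence $q\equiv 5\pmod 8$, the shape of $G_\a$, and the admissible field factor $f\mid e$, giving part~(2)(ii).
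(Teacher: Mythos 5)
Your proposal follows the same route as the paper's proof: pass to a maximal $G$-invariant partition $\BB$, use quasiprimitivity to get a faithful primitive quotient $G^{\BB}\cong G$, split according to Lemma~\ref{Block}, bound $|B|$ via Lemma~\ref{Subgroup}, and feed the quotient into Theorem~\ref{Less-4Cyc} (or Theorem~\ref{pri-d-gps} in the regular-dihedral sub-case). The gaps are in the execution, and one of them is serious.

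The serious gap is the assertion that, after Magma disposes of the bounded and sporadic candidates, ``the only infinite family arises from $\soc(G^{\BB})=\PSL_2(q)$ acting on the projective line''. The lists you appeal to contain several other \emph{infinite} families: $\soc(G^{\BB})=\A_m$ in its natural action (this candidate survives the cycle-type tables in each sub-case with $|B|=2$), $\soc(G^{\BB})=\PSL_d(q)$ with $d\ge 3$ containing the image of $\l a\r$ in a Singer subgroup, and, in the regular-dihedral sub-case, the families $\PSL_2(p^e).o$ with $p^e\equiv 3\pmod 4$ and $\PGL_2(p^e).\ZZ_f$ with $p^e\equiv 1\pmod 4$ from parts (d) and (e) of Theorem~\ref{pri-d-gps}. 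None of these can be ``settled by direct computation in {\sc Magma}'', and eliminating them is the bulk of the paper's actual proof. The missing idea is the quasiprimitivity test applied after reconstruction: one shows that every candidate index-$|B|$ subgroup $G_\a\le G_B$ is forced into a proper normal subgroup of $G$, which is then intransitive on $\Ome$, a contradiction. For instance, for $\soc(G^{\BB})=\A_{2n}$ the existence of an index-$2$ subgroup of $G_B$ forces $(G,G_B)=(\S_{2n},\S_{2n-1})$ and $G_\a=\A_{2n-1}\le\soc(G)$; analogous normal-subgroup arguments kill Theorem~\ref{pri-d-gps}(d),(e). The elimination of $d\ge 3$ requires Lemma~\ref{lm:mdq}(a) applied inside the normaliser $\GammaL_1(p^{de})$ of the Singer subgroup, whereas you invoke Lemma~\ref{lm:mdq} only to pin down the reflection in the surviving $d=2$ family; and even within that family the precise shape $G=\PSL_2(p^e).f$ in part (2)(ii), i.e.\ $G\cap\PGL_2(q)=\PSL_2(q)$, needs this same transitivity argument rather than being automatic.

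The lesser gap: in the case where every block lies inside an $H$-orbit, you argue that the two cores (one per $H$-orbit) are nontrivial subgroups of $\l a\r$ ``and hence meet nontrivially''. That inference is false in general, since a cyclic group has trivially intersecting subgroups of coprime orders. The repair is what the paper does: all $2t$ block stabilizers have the \emph{same} order $|B|$, so Lemma~\ref{Subgroup} applies to the whole collection at once; indeed its proof shows that every subgroup of $\D_{2n}$ of order $m\ge 3$ contains the fixed subgroup $\l a^{2n/m}\r$, so the two cores share this nontrivial normal subgroup of $H$, contradicting faithfulness directly. With this repair (and your direct check of the residual $|B|\ge n$ possibility), your conclusion $|B|=2$ stands, and your closing derivation of $q\equiv 5\pmod 8$ from the conjugacy of involutions in $\PSL_2(q)$ is correct and matches the paper.
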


\proof Let $H_0$ and $H_1$ be the two orbits of $H$ on $\Ome$,
and let $\BB$ be a maximal $G$-invariant partition of $\Ome$.
Suppose that $B\in\BB$ contains a point $\a\in\Ome$.
Since $|G|/|G_\a|=|\Ome|=|B|\cdot|\BB|=|B|\cdot|G|/|G_B|$, $G_\alpha$ is of index $|B|$ in $G_{B}$.
Since $|\BB|\ge 2$ and $G$ is quasiprimitive on $\Ome$, we conclude that $G$ is faithful on $\BB$, and $G^{\BB}\cong G$ is primitive by the maximality of $\BB$.
Moreover, according to Lemma~\ref{Block}, either all elements of $\BB$ are subsets of $H_0$ or $H_1$, or $B\cap H_0\ne \emptyset$ and $B\cap H_1\ne\emptyset$.
We treat these two cases separately.

\vskip0.1in
{\noindent\bf Case 1.} Suppose that all elements of $\BB$ are subsets of $H_0$ or $H_1$.

In this case, $|B|$ divides $|H|$,
and we may assume $H_0=B_1\cup\cdots \cup B_t$ and $H_1=B_{t+1}\cup\cdots\cup B_{2t}$
such that $\BB=\{B_1,\dots,B_t,B_{t+1},\dots,B_{2t}\}$.
Since $H$ is regular on both $H_0$ and $H_1$,
$H$ acts transitively on $\{B_1,\dots,B_t\}$ and $\{B_{t+1},\dots,B_{2t}\}$,
and hence $|H|/|H_{B_1}|=\cdots=|H|/|H_{B_{2t}}|=|\BB|/2$.
If $|\BB|/2<n$,
then Lemma~\ref{Subgroup} implies that $H_{B_1}\cap\cdots \cap H_{B_{2t}}$ contains a nontrivial normal subgroup of $H$, which means that $H$ is not faithful on $\BB$, a contradiction.
Thus $|\BB|/2=n$,  and so $t=n$, $|B|=2$ and $|H_{B_i}|=2$.
Since $H$ is transitive on $\{B_1,\dots,B_t\}$ and on $\{B_{t+1},\dots,B_{2t}\}$,
$H_{B_1},\dots,H_{B_n}$ are conjugate in $H$,
and also $H_{B_{n+1}},\ldots,H_{B_{2n}}$ are conjugate in $H$.
Notice that $H_{B_1}\cap\cdots \cap H_{B_{2n}}$ contains no nontrivial normal subgroup of $H$.
Therefore, there are following two possibilities (interchanging $H_{B_1}$ and $H_{B_{n+1}}$ if necessary):
\begin{itemize}
\item[(a)] $n$ is even, $H_{B_1}=\l a^ib\r$ for some $i$ and $H_{B_{n+1}}=\l a^{n/2}\r$;
\item[(b)] $H_{B_1}=\l a^ib\r$ and $H_{B_{n+1}}=\l a^jb\r$  for some $i$ and $j$.
\end{itemize}

Assume that case (a) occurs. Since $\l a\r H_{B_1}=H$, $\l a\r\le G^{\BB}$ is transitive and so regular on $\{B_1,\dots,B_t\}$.
Since $H_{B_{n+1}}\le \l a\r$ and $H_{B_{n+1}},\ldots,H_{B_{2n}}$ are conjugate in $H$,
we have that $H_{B_{j}}=\l a^{n/2}\r$ for each $j\in\{n+1,\dots,2n\}$,
it follows that each orbit of $\l a\r$ on $\{B_{t+1},\dots,B_{2t}\}$ has length $|\l a\r:\l a^{n/2}\r|=n/2$.
Thereby $a$ has cycle lengths $(n,n/2,n/2)$ on $\BB$.
Applying Theorem~\ref{Less-4Cyc} to the primitive permutation group $G^{\BB}$ of degree $2n$
with an element $a$ having cycle lengths $(n,n/2,n/2)$, we may read out that
the only possibility is $\soc(G^{\BB})=\A_{2n}$ with $n\geq 3$ in its natural action of degree $2n$ (Line 1 of \cite[Table~1]{GMPS16}).
Now  $(G,G_B)=(\A_{2n},\A_{2n-1})$ or $(\S_{2n},\S_{2n-1})$.
Since  $G_B$ has a subgroup $G_\alpha $ with index $|B|=2$, we further have that  $(G,G_B)=(\S_{2n},\S_{2n-1})$ and $G_\alpha=\A_{2n-1}$.
Noticing that $G_\alpha=G_\a'\le G'=\soc(G)\cong\A_{2n}$,
we conclude that the orbit of $\soc(G)$ containing $\alpha$ has length $|\soc(G):\soc(G)_\a|=2n=|\Ome|/2$,
which means that $\soc(G)$ is not transitive on $\Ome$, contradicting the quasiprimitivity of $G$ on $\Ome$.

Assume that  case (b) occurs.
Then $\l a\r H_{B_1}=\l a\r H_{B_{n+1}}=H$,
so $\l a\r \le G^{\BB}$ is regular on both $\{B_1,\dots,B_n\}$ and $\{B_{t+1},\dots,B_{2n}\}$, and  hence $a$ has cycle lengths $(n,n)$ on $\BB$.
Therefore, $G^{\BB}\cong G$ is a primitive permutation group  containing a biregular cyclic subgroup $\l a\r\cong\ZZ_n$.
By the classification of such groups given in \cite[Theorem 3.3]{Muller13}, one of the following holds:
\begin{itemize}
\item[\rm (b.1)] $\soc(G^{\BB})=\A_{2n}$ (with $n\geq 3$) in its natural action of degree $2n$; 
\item[\rm (b.2)] $\soc(G^{\BB})=\PSL_d(q)$ in its natural action of degree $(q^d-1)/ (q-1) $, where $d$ is even and $q$ is odd, and $n=(q^d-1)/ (2(q-1))$; 
\item[\rm (b.3)] $(\soc(G^{\BB}),n)=(\A_5,5)$, 
$(\M_{12},6)$, 
$(\M_{22},11)$,  $(\M_{24},12)$, 
$(\ZZ_2^2,2)$, $(\ZZ_{2}^3,4)$, 
or $(\ZZ_2^4,8)$. 
\end{itemize}

For case (b.1),
as $ G_\alpha $ is of index 2 in $G_B$, we derive that $(G,G_B)=(\S_{2n},\S_{2n-1})$ and $G_\alpha=\A_{2n-1}$.
Then it follows from $G_\alpha \le\soc(G)=\A_{2n}$  that $\soc(G)$ is not transitive on $\Ome$,  a contradiction.

Next consider case (b.2). Then $G\leq \mathrm{P\Gamma L}_d(q)$.
From~\cite[Lemmas~4.2~and~4.4]{GMPS16} we see that   $  a $ lies in a Singer subgroup of $\PGL_d(q)$ of order $(q^d-1)/(q-1)$.
 Notice that the order of $a$ is $(q^d-1)/2(q-1)$.
Let $\l x\r=\GL_1(q^d) $ be the Singer subgroup of $\GL_d(q)$ and let $Z=\langle x^{(q^d-1)/(q-1)}\rangle$ be the center of $\GL_d(q)$.
Identify $\PGammaL_d(q)$ with $\GammaL_d(q)/Z$.
Then we may identify  $a$ with $x^2Z$.
Write $q=p^e$ for some prime $p$ and positive integer $e$, and write $\N_{\GammaL_d(q)}(\langle x^4 \rangle) =\langle x\rangle:\langle y\rangle $, where $\langle y\rangle \cong \ZZ_{de}$.
Now $b=gZ$ for some $g\in \GL_1(q^d):\ZZ_{de}$.
Since  $o(x^2)=(q^d-1)/2(q-1)$ is not a divisor of $q^{d'}-1$ for any $1\leq d' <d$, we have $ \N_{\GammaL_d(q)}(\langle x^2 \rangle)=\langle x\rangle:\langle y\rangle\cong \GammaL_1(p^{de})$ with $\langle y\rangle \cong \ZZ_{de}$ and $x^y=x^p$.
Then $b=gZ$ for some $g\in \langle x\rangle:\langle y\rangle $.
Since $a^b=a^{-1}$, we have $x^{2}(x^{2})^g\in Z$.
By Lemma~\ref{lm:mdq} (corresponding to the case $m=2$), we conclude that $d=2$ and $g=x^iy^{e}$ for some $0\leq i \leq q^{2}-1$.
It follows that $H$ is either $H_1:=\langle x^2Z, y^{e}Z \rangle/Z$ or $H_2:=\langle x^2Z, xy^{e}Z \rangle/Z$.
Let $M=\langle xZ,y^{e}Z\rangle$.
From~\cite[p.187, Satz~7.3]{Huppert1967} we see that $M \leq\GL_2(q)/Z$.
Then $H<M<\PGL_2(q)$.
Notice that $M \cong \D_{2(q+1)}$ as $xx^{y^{e}}=x^{ 1+p^e }=x^{1+q}\in Z$, and that $\PGL_2(q)$ has only one conjugate class of $\D_{2(q+1)}$, and every $\D_{2(q+1)} $ of $\PGL_2(q)$ has two  conjugate classes of subgroups $\D_{ q+1 }$, of which one is contained in $\PSL_2(q)$ and the other is not contained in $\PSL_2(q)$.
Clearly, $H_1\leq \PSL_2(q)$ while $H_2 \nleq \PSL_2(q)$.
It follows that $H$ is either $ \langle x^2Z, y^{e}Z \rangle $ or $ \langle x^2Z, xy^{e}Z \rangle $.
Let $M=\langle xZ,y^{e}Z\rangle$.
From $xx^{y^{e}}=x^{ 1+p^e }=x^{1+q}\in Z$ we see $M \cong \D_{2(q+1)}$.
Moreover, since $\mathbf{N}_{\mathrm{GL}_2(q)}(\langle x\rangle)\cong \langle x\rangle.2$ (see~\cite[p.187, Satz~7.3]{Huppert1967}), we obtain $M \leq\GL_2(q)/Z$.
Then $H<M<\PGL_2(q)$.

We claim that $G \cap \PGL_2(q)=\PSL_2(q)$.
Suppose for a contradiction that $G \cap \PGL_2(q)>\PSL_2(q)$.
Then $L:=\PGL_2(q) \leq G$, and since $L$ is normal in $\Aut(\PSL_2(q))$,  it is also normal in $G$.
By the transitivity of $L$ on both $\BB$ and $\Ome$, we conclude that $L_\alpha$ is of index $2$ in $L_{B}=\ZZ_p^e : \ZZ_{ p^e-1 } $ and so $L_\alpha=\ZZ_p^e : \ZZ_{ (p^e-1)/2 } < \PSL_2(q)$, which implies that $\PSL_2(q)$ is not transitive on $\Ome$, a contradiction.
Therefore, the claim holds, that is, $G \cap \PGL_2(q)=\PSL_2(q)$.
Since $G/(G \cap \PGL_2(q))\cong \PGL_2(q)G/\PGL_2(q) \leq  \ZZ_e$, we derive that $G=\PSL_2(q).f$ for some $f\mid e$.
Furthermore,  the claim implies that $H <\PSL_2(q)$ because if not then $G\geq \langle \PSL_2(q),H \rangle=\PGL_2(q)$.

Since $T:=\PSL_2(q)$ is transitive on both $\BB$ and $\Ome$, the group $T_\alpha $  is a subgroup with index $|B|=2$ of $ T_B=\ZZ_p^e : \ZZ_{ (p^e-1)/2 }$, and hence $T_\alpha=\ZZ_p^e : \ZZ_{ (p^e-1)/4 } $.
This implies that  $ p^e \equiv 1\pmod{4}$.
Note that $\PSL_2(p^e)$  has only one conjugate class of involutions (see  for example~\cite[Table~B.1]{BG-book}).
If $p^e \equiv 1\pmod{8}$, then both $H$ and $T_\alpha$ contain involutions of $T$, which implies that $H$ is not semiregular on $\Ome$, a contradiction.
Therefore, $p^e \equiv 5\pmod{8}$.
This proves part (ii.1) of the lemma.

Finally consider case (b.3). Recall that $G^{\BB}\cong G$.
If  $(\soc(G^{\BB}),n)= (\M_{12},6)$, then $G=\M_{12}$ or $\Aut(\M_{12})\cong\M_{12}.2$,
as $\Aut(\M_{12})$ has no subgroup with index 12 \cite{Atlas}, we conclude that
$(G,G_B)= (\M_{12},\M_{11})$,
which is impossible as $G_B$ has a subgroup $G_\a$ with index 2.
If  $(\soc(G^{\BB}),n)= (\M_{22},11)$, then $(G,G_B)= (\M_{22},\PSL_3(4))$ or $(\M_{22}.2,\PSL_3(4).2) $.
The former case contradicts that $G_B$ has a subgroup $G_\a$ with index 2,
and for the latter case we have $G_\alpha<\soc(G)\cong\PSL_3(4)$, which implies that $\soc(G)$  is not transitive on $\Ome$,
also a contradiction.
If  $(\soc(G^{\BB}),n)= (\M_{24},12)$,
then $(G,G_B)= (\M_{24},\M_{23})$ as $\Out(\M_{24})=1$,
which is a contradiction as $G_B$ has a subgroup $G_\a$ with index 2.
If $(\soc(G^{\BB}),n)=(\ZZ_2^2,2)$, $(\ZZ_{2}^3,4)$ or $ (\ZZ_2^4,8)$, then $|\soc(G)|=2n=|\Ome|/2$,
so $\soc(G)$ is intransitive on $\Ome$, contradicting the quasiprimitivity of $G$ on $\Ome$.
Therefore, $(\soc(G^{\BB}),n)=(\A_5,5)$.
Now $G=\A_5$ or $\S_5$, which gives rise to examples as in part(2)(i) of Theorem~\ref{Thm-1}.

\vskip0.1in
{\noindent\bf Case 2.} Suppose that  $B\cap H_0\ne \emptyset$ and $B\cap H_1\ne\emptyset$ for each $B\in\BB$.

Since $H$ is regular on both $H_0$ and $H_1$,
$H$ is transitive on $\BB$.
Recall that $G$ acts faithfully on $\BB$, so does $H$,
hence $H_B$ is core-free in $H$.
Since $H$ is a dihedral group, either
\begin{itemize}
\item[(a)] $H_B=1$; or
\item[(b)] $H_B=\l a^ib\r\cong\ZZ_2$ for some $i$.
\end{itemize}

First assume case (a) occurs. Then $H$ is regular on $\BB$, and so $|\BB|=2n$, $|B|=2$  and $G_\a$ is a subgroup of $G_B$ with index $2$.
Now $G^{\BB}\cong G$ is a primitive group of degree $2n$  containing a regular dihedral group $H^{\BB}\cong H$,
hence $(G^{\BB},G_B,H)$ (as $(G,G_w,H)$ there) satisfies \cite[Theorem 1.5]{Li-TAMS05}. Notice that $G_B$ has a subgroup of index 2,
the possible candidates are as following:
\begin{itemize}
\item [(a.1)] $(G^{\BB},G_B,n)=(\S_4,\S_3,2)$, $ (\M_{22}.2,\PSL_3(4).2 ,11)$  or $(\S_{2n},\S_{2n-1},n)$ with $n\geq 3$;
\item [(a.2)] $G^{\BB}=\PSL_2(p^e).o$, $H=\D_{p^e+1}$ and $G_B=\ZZ_p^e:\ZZ_{p^e-1\over2}.o$ where $p^e\equiv3\ (\mod 4)$ and $o\le\Out(\PSL_2(p^f))\cong\ZZ_2\times\ZZ_e$;
\item[(a.3)] $G^{\BB}=\PGL_2(p^e).\ZZ_f$, $H=\D_{p^e+1}$ and $G_B=(\ZZ_p^e:\ZZ_{p^e-1}).\ZZ_f$ where $p^e\equiv 1 ~(\mod 4)$ and $f\mid e$.
\end{itemize}

If $(G^{\BB},G_B,n)=(\S_4,\S_3,2)$, then $|\Ome|=8$ and so $\soc(G)=\ZZ_2^2$ is not transitive on $\Ome$, contradicting the quasiprimitivity of $G$.
If $(G^{\BB},G_B,n)=(\M_{22}.2,\PSL_3(4).2 ,11)$  or $(\S_{2n},\S_{2n-1},n)$, then
$G_\a=\PSL_3(4)$ or $\A_{2n-1}$ is contained in $\soc(G)$. It follows that
$$|\a^{\soc(G)}|=|\soc(G):\soc(G)_\a|=|\soc(G):G_\a|=2n=|\Ome|/2,$$
namely $\soc(G)$ is not transitive on $\Ome$, also contradicting the quasiprimitivity of $G$.

For case (a.2), since $\soc(G)=\PSL_2(p^e)$ is transitive on both $\BB$ and $\Ome$, we derive that
$T_B=\ZZ_p^e : \ZZ_{(p^e-1)/2}$ and $|T_B|/|T_\a|=2$.
However, as $p^e\equiv 3(\mod 4)$, $\ZZ_p^e : \ZZ_{(p^e-1)/2}$ has no subgroup with index $2$, a contradiction.

For case (a.3), let $N\cong\ZZ_p^e:\ZZ_{ p^e-1 }$ be the normal subgroup of $G_{B}=(\ZZ_p^e:\ZZ_{ p^e-1}).\ZZ_f$.
Since $G_\a$ is of index $2$ in $G_{B}$, we see that $N\cap G_\a\cong\ZZ_p^e:\ZZ_{p^e-1 }$ or $\ZZ_p^e:\ZZ_{(p^e-1)/2}$,
so $G_\alpha\cong(\ZZ_p^e:\ZZ_{ p^e-1 }).\ZZ_{f/2}$ with $f$ even or $G_\a\cong(\ZZ_p^e:\ZZ_{(p^e-1)/2}).\ZZ_f$,
and hence $G_\a$ is contained in $\PGL_2(q).\ZZ_{f/2}$ or $\PSL_2(q).\ZZ_f $.
Notice that both $\PGL_2(q).\ZZ_{f/2}$ and $\PSL_2(q).\ZZ_f $ are normal subgroups with index $2$ of $G$.
It follows that either $\PGL_2(q).\ZZ_{f/2}$ or $\PSL_2(q).\ZZ_f $ is intransitive on $\Ome$, a contradiction.

\vskip0.1in
Now we assume case (b) occurs, that is,  $H_B=\l a^ib\r$ for some $i$. Then $|\BB|=|H|/|H_B|=n$, and so $|B|=4$ and $G_\a$ is a subgroup of index $4$ in $G_B$.
Recall that $G$ is faithful on $\BB$ and $H$ is transitive on $\BB$. Since $\l a\r H_B=H$, we further conclude
that the group $\l a\r$ is transitive  and so regular on $\BB$.
Now $G^{\BB}\cong G$ is a primitive group of degree $n$  containing a regular cyclic group,
and $G_B$ has a subgroup with index 4,
according to~\cite{Jones02} or \cite[Corollary 1.2]{Li-Proc03}, one of the following holds:

\begin{itemize}
\item[(b.1)] $\ZZ_p\leq G^{\BB}\leq \AGL_1(p)$ and $n=p$;
\item[(b.2)] $\PGL_d(q)\leq G^{\BB}\leq \PGammaL_d(q)$ and $n=(q^d-1)/(q-1)$;
\end{itemize}

For case (b.1), we have $|\Ome|=4p$ and $\soc(G)\cong\ZZ_p$ is intransitive on $\Ome$, contradicts that
$G$ is quasiprimitive on $\Ome$.

Consider case (b.2).
Then $a \in G^{\BB}$ has cycle length $(n)$ in $\BB$, and by~\cite[Lemmas~4.2~and~4.4]{GMPS16} we see that   $a$ lies in a Singer cycle of $\PGL_d(q)$ of order $(q^d-1)/(q-1)$.
If $d\geq 3$, then by the same argument as in Lemma~\ref{PrimBiReg} and Lemma~\ref{lm:mdq} (for the case $m=1$), we conclude that $G$ contains no such bi-regular dihedral group $H$.
Therefore, $d=2$.
Write $q=p^e$ for some prime $p$ and positive integer $e$, and let $G=\PGL_2(q).f$ for some $f \mid e$.
  Recall that $G_\alpha$ is a subgroup with index $4$ of $G_B= \ZZ_p^e : (\ZZ_{ p^e-1}.f)$.
Let $N=\ZZ_p^e \unlhd G_B$.

Assume  $p=2$.
If $e=2$, then $\PGL_2(q)\cong \A_5$, and part (ii.2) holds.
We show next that the case $e\geq 3$ is impossible.
Suppose $e\geq 3$.
Then $G_\alpha \cap N \neq 1$.
Since $|G_B|/|G_\alpha|=4$, we have  $\ZZ_{ 2^e-1 } \unlhd G_\a/(G_\alpha \cap N)$.
Note that  $\ZZ_{ 2^e-1 }$ is irreducible on $\ZZ_2^e$.
It follows that $\ZZ_2^e : \ZZ_{ 2^e-1 } \unlhd G_\a$, and hence $G_\a=\ZZ_2^e : \ZZ_{ 2^e-1 }.f/4$.
So $G_\a$ is contained in the normal subgroup $\PGL_d(q).f/4$ of $G$, which implies that $\PGL_d(q).f/4$ is not transitive on $\Ome$, a contradiction.

Assume that $p$ is an odd prime.
Since $G_\alpha$ has index with $4$ in $G_B$, we derive that  $N=\ZZ_p^e <G_\alpha$, and  $G_\alpha=\ZZ_p^e : \ZZ_{ (p^e-1)/4 }.f $, or $\ZZ_p^e : \ZZ_{ (p^e-1)/2 }.f/2 $ or $ \ZZ_p^e : \ZZ_{ (p^e-1)}.f/4  $.
Note that both $\ZZ_p^e : \ZZ_{ (p^e-1)/4 }.f$ and  $\ZZ_p^e : \ZZ_{ (p^e-1)/2 }.f/2$ are subgroups of $ \PSL_2(q).f$, and $ \ZZ_p^e : \ZZ_{ (p^e-1)}.f/4 <\PGL_2(q).f/4$, and both $\PSL_2(q).f$ and $\PGL_2(q).f/4 $ are proper normal subgroups of $G=\PGL_2(q).f$.
Therefore, either $\PSL_2(q).f$ or $\PGL_2(q).f/4 $ is not transitive on $\Ome$, a contradiction.\qed

Combining  Lemma~\ref{PrimBiReg} and Lemma~\ref{QP-bireg}, we complete the proof of Theorem~\ref{Thm-1}.

\section{Examples of graphs}

We here introduce examples appearing in Theorem~\ref{Thm-2}.

\begin{example}\label{ex:affinepolargraph}
Let $m$ be a positive integer and let $V$  be a vector space of dimension $2m$ over $\mathbb{F}_q$,   equipped with a nondegenerate quadratic form $Q$ of type $\epsilon \in \{+,-\}$.
Following~\cite[\textsection{3.3.1}]{BrouwerSRG}, the \emph{affine polar graph} $\mathrm{VO}^{\epsilon}_{2m}(q)$ is a graph with vertex set  $V$, and  $v$ and $w$ being adjacent if and only if $Q(v-w)=0$.
For every $\epsilon \in \{+,- \}$, the valency of $\mathrm{VO}^{\epsilon}_{2m}(q)$ is $(q^m-\epsilon)(q^{m-1}+\epsilon)$.
It can be verified in~{\sc Magma}~\cite{Magma} that both $\mathrm{VO}^{+}_{4}(2)$ and $\mathrm{VO}^{-}_{4}(2)$ are connected arc-transitive bi-Cayley graphs of $\D_{8}$, and their full automorphism groups are $\ZZ_2^4 : \mathrm{SO}^{+}_{4}(2)$ and $\ZZ_2^4 : \mathrm{SO}^{-}_{4}(2)$, respectively.
Notice that, for every $\epsilon \in \{+,- \}$, the group  $\ZZ_2^4 : \mathrm{SO}^{\epsilon}_{4}(2)$ is primitive and has rank $3$ on $V$.
Furthermore, the graph $\mathrm{VO}^{+}_{4}(2)$ is isomorphic to the \emph{Hamming graph} $\mathrm{H}(2,4)$ (whose vertex set  is $\{1,2,3,4 \}^2$, and two distinct vertices $(i,j)$ and $(i',j')$ being adjacent if and only if $i=i'$ or $j=j'$).
\end{example}

\begin{example}
Let $q$ be a prime power and let $d\geqslant 3$ be an integer. Let
$\mathrm{PG}(d-1,q)$ be the $(d-1)$-dimensional projective geometry over a
field of order $q$. Let $V_1$ and $V_{n-1}$ be the sets of points
and hyperplanes of $\mathrm{PG}(d-1,q)$, respectively. The {\em
point-hyperplane incidence graph}, denoted by $B(\mathrm{PG}(d-1,q))$, of
$\mathrm{PG}(d-1,q)$ is the graph with vertices $V_1\cup V_{n-1}$, and
$\a\in V_1$ and $\b\in V_{n-1}$ are adjacent if and only if $\a$ is
a point of $\b$ (see, for example,  \cite{DGJ22}).
Let $B'(\mathrm{PG}(d-1,q))$ be the bipartite complement of $B(\mathrm{PG}(d-1,q))$,  that is, the bipartite graph with the same vertex set but a $1$-subspace and a $(d-1)$-subspace are adjacent if and only if their intersection is the zero subspace.
Notice that $\Aut(B(\mathrm{PG}(d-1,q)))\cong \Aut(B'(\mathrm{PG}(d-1,q)))\cong \Aut(\PSL_{d}(q))=\PGammaL_d(q).2$, where $2$ is an inverse-transpose automorphism (following~\cite[p.14]{K-Lie}).
\end{example}

The following lemma was pointed out in \cite{DMM08} without a proof. We here present a simple proof.

\begin{lemma}\label{lm:BPGd-1q}
Both $B(\mathrm{PG}(d-1,q))$ and $B'(\mathrm{PG}(d-1,q))$ are  Cayley graphs on $\D_{2(q^d-1)/(q-1)}$,
and hence are bi-Cayley graphs on $\D_{(q^d-1)/(q-1)}$ if $(q^d-1)/(q-1)$ is even.
\end{lemma}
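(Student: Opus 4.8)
I must show both the point–hyperplane incidence graph $B(\PG(d-1,q))$ and its bipartite complement $B'(\PG(d-1,q))$ are Cayley graphs on the dihedral group $\D_{2(q^d-1)/(q-1)}$, and conclude they are bi-Cayley on $\D_{(q^d-1)/(q-1)}$ when this number is even.

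Let me set up notation. Write $N = (q^d-1)/(q-1)$; this is the number of points (and equally the number of hyperplanes) of $\PG(d-1,q)$, so each graph has $2N$ vertices, split into the bipartition $V_1$ (points) and $V_{n-1}$ (hyperplanes).

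**Strategy via the Singer cycle.** The natural source of a large cyclic symmetry here is the Singer subgroup. By the material developed just before this lemma, $\GL_d(q)$ has a Singer subgroup $\langle x\rangle \cong \GL_1(q^d)$ of order $q^d-1$, with center $Z = \langle x^{(q^d-1)/(q-1)}\rangle$. The projective image $\bar{x} = xZ$ generates a cyclic subgroup of $\PGL_d(q)$ of order exactly $N$, which acts regularly on the $N$ points of $\PG(d-1,q)$ (since $\langle x\rangle$ is regular on nonzero vectors of $\mathbb{F}_q^d$, its projective image is regular on the projective points). Dually, $\langle \bar{x}\rangle$ is regular on the $N$ hyperplanes. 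So I have a cyclic group $C = \langle c\rangle \cong \ZZ_N$ acting semiregularly on $V_1 \cup V_{n-1}$ with exactly the two orbits $V_1$ and $V_{n-1}$.

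**The dihedral element.** To upgrade $C$ to a dihedral group acting regularly on all $2N$ vertices, I need an involution $\tau \in \Aut(B(\PG(d-1,q)))$ that swaps $V_1$ and $V_{n-1}$ and inverts $c$. The natural candidate is the inverse-transpose (duality) automorphism, which exchanges points and hyperplanes; recall from the preceding Example that $\Aut(B(\PG(d-1,q))) \cong \PGammaL_d(q).2$ with the final factor $2$ being exactly this inverse-transpose automorphism. The plan is: let $\tau$ be a suitable point–hyperplane duality and set $H = \langle c, \tau\rangle$. I must verify three things — that $\tau$ is a graph automorphism (duality preserves incidence, so it sends $B$ to $B$ and, because it reverses inclusion, sends $B'$ to $B'$ as well), that $\tau^2 = 1$ (or can be chosen so after composing with an element of $C$), and that $\tau c \tau^{-1} = c^{-1}$. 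The last point is the crucial computation: the inverse-transpose automorphism conjugates a Singer element $x$ to (a conjugate of) $x^{-q^i}$ for some $i$, i.e. it inverts $\langle x\rangle$ up to a power of the field Frobenius, and modulo $Z$ one can arrange conjugation of $\bar{x}$ to $\bar{x}^{-1}$ after absorbing the Frobenius twist into the choice of $\tau$. Granting this, $H = \langle c\rangle{:}\langle\tau\rangle \cong \D_{2N}$ acts regularly on $V_1\cup V_{n-1}$ (it is transitive of order $2N = |V_1\cup V_{n-1}|$, hence regular), exhibiting both graphs as Cayley graphs on $\D_{2N} = \D_{2(q^d-1)/(q-1)}$.

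**Main obstacle and the final reduction.** The technical heart is confirming that the duality $\tau$ inverts $c$ modulo scalars; the Singer element and its image under inverse-transpose are only conjugate up to a Frobenius power, so I expect the real work to be choosing $\tau$ (possibly as a duality composed with a field automorphism and a power of $c$) so that genuinely $\tau^2 = 1$ and $c^\tau = c^{-1}$ hold on the nose. An alternative, cleaner route avoids explicit matrix bookkeeping: realize $c$ as multiplication by a generator on $\mathbb{F}_{q^d}^*/\mathbb{F}_q^*$ acting on the projective points identified with cosets, and take $\tau$ to be the map induced by a nondegenerate duality that reverses this cyclic order — the dihedral relation then reduces to the elementary fact that $\tau$ reverses the cyclic structure. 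Once $H \cong \D_{2N}$ is produced as a regular group of automorphisms, the Cayley claim is immediate. For the final clause, note that when $N = (q^d-1)/(q-1)$ is even, $\D_{2N}$ contains $\D_N$ as an index-two subgroup; restricting $H$ to this $\D_N$ gives a subgroup acting semiregularly with exactly two orbits (namely $V_1$ and $V_{n-1}$), which is precisely the assertion that the graphs are bi-Cayley on $\D_{(q^d-1)/(q-1)}$.
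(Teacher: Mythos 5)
Your high-level strategy is the same as the paper's (a projective Singer cycle acting regularly on points and on hyperplanes, extended by a point--hyperplane duality to a dihedral group regular on all $2(q^d-1)/(q-1)$ vertices), but the proposal has a genuine gap exactly at the point you yourself flag as ``the real work'': you never produce an element $\tau$ satisfying \emph{both} $\tau^2=1$ and $c^\tau=c^{-1}$, you only assert that one can ``absorb'' discrepancies into the choice of $\tau$. This absorption does not work in the way you suggest. If $g$ is any element of $\PGammaL_d(q).2$ inverting $c$, then every element of the coset $\langle c\rangle g$ has the \emph{same} square, since $(c^k g)^2 = c^k(gc^kg^{-1})g^2 = c^kc^{-k}g^2 = g^2$; moreover $g^2$ centralizes $c$, and the projective Singer subgroup is self-centralizing in $\PGL_d(q)$, so $g^2=c^m$ with $c^{2m}=1$. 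Hence if your first candidate $g$ happens to square to the central involution $c^{N/2}$ (possible only when $N$ is even, which is precisely the case of interest for the bi-Cayley conclusion), then \emph{no} element of $\langle c\rangle g$ is an involution and $\langle c,g\rangle$ is dicyclic, not dihedral; post-multiplying by powers of $c$ cannot repair this. Also, your stated worry is misplaced: $x^\iota=(x^{\mathsf T})^{-1}$ is conjugate in $\GL_d(q)$ to $x^{-1}$ on the nose (a matrix is conjugate to its transpose), so there is no Frobenius twist to remove; the genuine difficulty is solely the involution condition.

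The paper closes exactly this gap with one classical input you are missing: by Taussky--Zassenhaus, the matrix conjugating the Singer generator $x$ to its transpose can be chosen to be a \emph{symmetric} matrix $y_x$, i.e.\ $x^{y_x}=x^{\mathsf T}$ with $y_x^{\mathsf T}=y_x$. Then with $\iota$ the inverse-transpose map, symmetry gives $(y_x\iota)^2=y_x\,(y_x^{-1})^{\mathsf T}=y_xy_x^{-1}=1$ and $x^{y_x\iota}=(x^{\mathsf T})^\iota=x^{-1}$, so $\langle x\rangle{:}\langle y_x\iota\rangle\cong\D_{2(q^d-1)}$, and passing modulo the center $Z$ yields the desired regular $\D_{2(q^d-1)/(q-1)}$. (Your sketched ``cleaner route'' via $\mathbb F_{q^d}^*/\mathbb F_q^*$ can be made to work -- it is the classical difference-set argument, where one checks directly that $a\mapsto B_{-a}$, $B_b\mapsto -b$ preserves incidence and is an involution inverting the translations -- but as written you do not define the duality or verify any of these identities, so it carries the same gap.) A final minor slip: when $N$ is even, the two orbits of the index-two subgroup $\D_N=\langle c^2,\tau\rangle$ are \emph{not} $V_1$ and $V_{n-1}$ (since $\tau$ swaps the two sides); the bi-Cayley conclusion still holds because any index-two subgroup of a regular group is semiregular with exactly two orbits, but the parenthetical identification of those orbits is wrong.
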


\begin{proof}
Fix a basis of $\mathbb{F}_q^d$ and let $\langle x \rangle\cong \ZZ_{q^d-1}$ be a Singer cycle of $\GL_d(q)$.
According to~\cite{TZ1959}, $x^{y_x}=x^{\mathsf{T}}$ for some symmetric matrix $y_x\in  \GL_d(q)$.
Let $\iota$ be the  inverse transpose map with respect to
the basis.
Then
\[(y_x\iota)^2=y_x\iota y_x\iota=y_x(y_x)^{\iota}=y_x  (y_x)^{-1\mathsf{T}} =y_x(y_x)^{-1}=1, \text{ and }  x^{y_x\iota}=(x^{\mathsf{T}})^{\iota}=x^{-1}. \]
This shows that $Y:=\langle x, y_x\iota\rangle =\langle x   \rangle : \langle   y_x\iota\rangle \cong \D_{2(q^d-1)}$.

Let $X=\GammaL_d(q) : \langle \iota\rangle$.
Then $X/Z\cong \Aut(\PSL_d(q))$ and $Y/Z\cong \D_{2(q^d-1)/(q-1)}$.
Recall the definitions of $B(\mathrm{PG}(d-1,q))$ and $B'(\mathrm{PG}(d-1,q))$.
Notice that $\langle x \rangle/Z$ acts regularly on both the set of $1$-dimensional subspaces of $\mathbb{F}_{q^d}$ and the set of $(d-1)$-dimensional subspaces of $\mathbb{F}_{q^d}$.
For every $y \in Y/\langle x\rangle  $, since $yZ/Z \notin \PGammaL_d(q)$, we see that $yZ/Z$ swaps the  set of $1$-dimensional subspaces and the set of  $(d-1)$-dimensional subspaces.
Hence $Y/Z$ acts regularly on the vertex sets of $B(\mathrm{PG}(d-1,q))$ and  $B'(\mathrm{PG}(d-1,q))$.\qed
\end{proof}

The coset graph is an important tool to construct and study arc-transitive graphs.
Let $G$ be a group, let $K$ be a core-free subgroup of $G$ and let $g\in G\setminus K$ such that $g^{-1}\in KgK$.
The \emph{coset graph} $\Cos(G,K,KgK)$ is  a graph with vertex set  the set of right cosets of $K$ in $X$, and two vertices $Kx$ is adjacent to $Ky$ with $x,y\in G$ if and only if $yx^{-1}\in KgK$.
The graph $\Cos(G,K,KgK)$ has order $|G|/|K|$ and valency $|K|/|K\cap K^g|$, and it is connected if and only of $\langle K,g \rangle=G$.
The next lemma is from \cite{Sab64}.

\begin{lemma}\label{Coset-gp}
\begin{itemize}
\item[(a)] Let $G$ be a group, let $K$ a core-free subgroup of $G$,  and let $g \in G$ satisfy  $g^{-1}\in KgK$ and $\langle K,g \rangle=G$.
Then $\Cos(G,K,KgK)$ is a connected $G$-arc-transitive graph with order $|G|/|K|$ and valence $|K|/|(K \cap K^g)|$.
Moreover, $\Cos(G,K,KgK )\cong \Cos(G,K^{\sigma},K^{\sigma} g ^{\sigma} K^{\sigma} )$ for every $\sigma\in \Aut(G)$.
\item[(b)] Let $\Ga $ be a connected $G$-arc-transitive graph with valence $d$ and let $\{ \alpha,\beta \}$ be an edge of $\G$.
Then $\Ga  \cong \Cos(G,G_\alpha,G_\alpha g G_\alpha )$ for every $g$ which swaps $\alpha$ and $\beta$.
\end{itemize}
 \end{lemma}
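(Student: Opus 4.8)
The plan is to establish the two classical correspondences of Sabidussi directly from the definitions, viewing $\Cos(G,K,KgK)$ as a $G$-space under right multiplication. For part (a), I would first record that the adjacency relation is symmetric and loop-free: if $Kx$ is joined to $Ky$ then $yx^{-1}\in KgK$, and since the hypothesis $g^{-1}\in KgK$ forces $Kg^{-1}K=KgK$, we get $xy^{-1}=(yx^{-1})^{-1}\in(KgK)^{-1}=Kg^{-1}K=KgK$, so $Ky$ is joined to $Kx$; and as $g\notin K$ there is no loop at $K$. Next I would note that right multiplication $Kx\mapsto Kxh$ is a well-defined faithful action of $G$ (faithful because $K$ is core-free) preserving adjacency, since $(yh)(xh)^{-1}=yx^{-1}$; hence $G\le\Aut$ of the graph, the stabilizer of the base vertex $K$ is exactly $K$, the graph has $|G{:}K|=|G|/|K|$ vertices, and $G$ is vertex-transitive.

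The heart of part (a) is the neighbourhood of $K$: the vertices adjacent to $K$ are precisely the cosets $Ky$ with $y\in KgK$, i.e.\ the set $\{Kgk:k\in K\}$. Since $K=G_K$ permutes this set transitively by right multiplication, $G$ is arc-transitive. Counting then yields the valency: $Kgk_1=Kgk_2$ iff $k_1k_2^{-1}\in g^{-1}Kg=K^g$, so the fibres of the map $k\mapsto Kgk$ are the right cosets of $K\cap K^g$ in $K$, giving valency $|K|/|K\cap K^g|$. Connectedness is the standard observation that the component of $K$ consists of the cosets $Kx$ with $x\in\l K,g\r$ (as $KgK\subseteq\l K,g\r$ and each edge-step keeps the representative inside this subgroup), which exhausts all $|G{:}K|$ vertices iff $\l K,g\r=G$. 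Finally, for $\sigma\in\Aut(G)$ the map $Kx\mapsto K^\sigma x^\sigma$ is a well-defined bijection carrying the relation $yx^{-1}\in KgK$ to $(yx^{-1})^\sigma\in K^\sigma g^\sigma K^\sigma$, hence a graph isomorphism onto $\Cos(G,K^\sigma,K^\sigma g^\sigma K^\sigma)$.

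For part (b), write $K=G_\a$. Faithfulness of $G\le\Aut\Ga$ makes $K$ core-free, since its core is the kernel of $G$ acting on $V\Ga$. Because $g$ swaps $\a$ and $\b$ we have $\a^{g^2}=\b^{g}=\a$, so $g^2\in K$ and hence $g^{-1}=(g^2)^{-1}g\in KgK$; thus $\Cos(G,K,KgK)$ is a legitimate graph. I would then use the canonical $G$-equivariant bijection $\phi:V\Ga\to\{Kh:h\in G\}$, $\a^h\mapsto Kh$, which is well defined and bijective because $G$ is vertex-transitive with $G_\a=K$. It remains to check that $\phi$ matches edges with edges. Since $G$ is arc-transitive, $K=G_\a$ acts transitively on $\Ga(\a)$, so $\Ga(\a)=\{\b^k:k\in K\}=\{\a^{gk}:k\in K\}$, which $\phi$ sends to $\{Kgk:k\in K\}$, exactly the neighbourhood of $K=\phi(\a)$ computed in part (a). As both graphs are $G$-vertex-transitive and $\phi$ is $G$-equivariant, agreement of the neighbourhoods of $\a$ and $\phi(\a)$ propagates to every vertex, so $\phi$ is a graph isomorphism $\Ga\cong\Cos(G,G_\a,G_\a g G_\a)$.

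The computations are all elementary; the one place needing genuine care is the passage from ``bijection'' to ``isomorphism'' in part (b). A $G$-equivariant bijection need not \emph{a priori} be a graph map, so the argument must pin down the neighbourhood of the base vertex on both sides and invoke arc-transitivity (equivalently, transitivity of $G_\a$ on $\Ga(\a)$) to transfer the single local match to a global one; the identity $g^{-1}\in KgK$, which follows from $g^2\in G_\a$, is precisely what guarantees the target coset graph is a legitimate undirected graph in the first place.
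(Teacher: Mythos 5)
Your proof is correct, but there is nothing in the paper to compare it against step by step: the paper gives no argument for this lemma at all, quoting it from Sabidussi \cite{Sab64} and asserting the order, valency and connectivity statements in the paragraph preceding the statement. What you supply is the standard self-contained verification of the Sabidussi correspondence, and it is sound: symmetry of the adjacency relation via $Kg^{-1}K=KgK$; the faithful, adjacency-preserving right-multiplication action with vertex stabilizer $K$ (faithful exactly because $K$ is core-free); the identification of the neighbourhood of the base vertex $K$ with $\{Kgk:k\in K\}$, which gives arc-transitivity at once and, by counting the fibres of $k\mapsto Kgk$ as right cosets of $K\cap K^g$, the valency $|K|/|K\cap K^g|$; and in (b) the $G$-equivariant bijection $\alpha^h\mapsto G_\alpha h$ together with the propagation argument, which is exactly right since $\alpha^h\sim\alpha^{h'}$ in $\Ga$ iff $\alpha\sim\alpha^{h'h^{-1}}$, while $G_\alpha h\sim G_\alpha h'$ in the coset graph iff $h'h^{-1}\in G_\alpha gG_\alpha$, so the single local match at the base vertex does globalize. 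The one spot where your write-up justifies less than it asserts is connectedness in (a): your parenthetical proves only that the component of $K$ is contained in $\{Kx:x\in\langle K,g\rangle\}$, which yields the implication ``connected $\Rightarrow\langle K,g\rangle=G$'', whereas the lemma needs the converse. The missing half is quick and uses an identity you already established: the set $T=K\cup\bigcup_{n\geq 1}(KgK)^n$ is closed under multiplication and, since $(KgK)^{-1}=Kg^{-1}K=KgK$, also under inversion, so $T$ is a subgroup containing $K$ and $g$; by hypothesis $T=G$, and every coset $Ky$ with $y\in(KgK)^n$ is reachable from $K$ in $n$ edge-steps, so the graph is connected. With that sentence added, both parts are complete.
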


\begin{example}\label{ex:A5Z3}
Let $G=\A_5=\langle (1,2,3),(3, 4, 5) \rangle$, and let $K=\langle (1,2,3)\rangle\cong \ZZ_3$, and let $H =\langle (1,2,3,4,5),(2,5)(3,4)\rangle\cong \D_{10}$, and let $\Ome$ be the set of right $K$-cosets in $G$.
Consider the right multiplication of $G$ on $\Ome$.
Since $K$ is a $3$-group and $H$ is a $\{2,5\}$-group, it follows that $H\cap K^g=1$ for every $g \in G$, which implies that $H$ acts semiregularly on $\Omega$.
Notice that $|\Omega|=20$.
Therefore, $H$ is a bi-regular subgroup of $G$ on $\Omega$.
Computation in {\sc Magma}~\cite{Magma} shows that among the $G$-orbital graphs on $\Ome$, there is only one which is connected and $G$-arc-transitive.
Moreover, this graph is of valency $3$ and has full automorphism group $\ZZ_2 \times \A_5$, and it is the graph $\mathrm{F020A}$ in~Conder's list~\cite{Conder768}.

\end{example}

\begin{example}\label{ex:S5Z6S3}
Let $G=\S_5=\langle (1,2,3,4,5),(1,2) \rangle$, and let $K=\langle (1,2,3),(4,5)\rangle\cong \ZZ_6$, and let $H =\langle (1,2,3,4,5),(2,5)(3,4)\rangle\cong \D_{10}$, and let $\Ome$ be the set of right $K$-cosets in $G$.
Clearly,  $H\cap K^g=1$ or $\ZZ_2$ for every $g \in G$.
Since $K^g$ has only one involution and this involution is an odd permutation, we conclude from $K<\A_5$ that  $H\cap K^g=1$ for every $g \in G$.
Therefore, $H$ is a bi-regular subgroup of $G$ on $\Omega$.
Computation in {\sc Magma}~\cite{Magma} shows that  among the $G$-orbital graphs on $\Ome$, there are, up to isomorphism, three graphs which is connected and $G$-arc-transitive, namely $\GG_{20}^{(1)}$, $\GG_{20}^{(2)}$ and $\GG_{20}^{(3)}$.
Moreover, these three graphs have the same valency $6$, and $\Aut(\GG_{20}^{(1)})\cong \Aut(\GG_{20}^{(2)}) \cong \ZZ_2 \times \S_5$ and $\Aut(\GG_{20}^{(3)})\cong  \ZZ_2^{10} :  \S_5$.
\end{example}

\begin{example}\label{ex:2q}
Let $V$ be a vector space of dimension $2$ over $\mathbb{F}_q$, where  $q=p^e\equiv 5\pmod{8} $ with an odd prime $p$.
Fix a basis $(v_1,v_2)$ of $V$.
Let $\mu$ be a generator of $\mathbb{F}_q^{*} $.
Take  $Z=\langle \mathrm{diag}(\mu ,\mu )  \rangle$ to be the center of $ \GammaL_2(q) $ and let  $P=\langle \mathrm{diag}(\mu^2,\mu^2)  \rangle\cong \ZZ_{(q-1)/2}$.
Let  $X =\langle Z,\mathrm{\Sigma L}_2(q)\rangle $ be a subgroup of $\GammaL_2(q) $, where $\mathrm{\Sigma L}_2(q)=\SL_2(q) : \langle \phi\rangle$ with $\phi$ acting on $V$ by $ (a_1v_1+a_2v_2)^\phi= a_1^p v_1+a_2^p v_2$ for all $a_1,a_2\in \mathbb{F}_q$.
Let $\Ome$ be the set of orbits of $P$ on $V\setminus \{0\}$ $($so $|\Ome|=2(q+1))$ and consider the action of $X$ on $\Ome$  by $
(v^P)^x=(v^x)^{P}$,   for all $x \in X$, $v \in V\setminus \{ 0\}$ $($one may notice that this action is well defined$)$.
Clearly,  this action is with kernel $P$, and $X/P\cong \ZZ_2\times \PSigmaL_2(q)$.
Let $G=\PSL_2(q)$ be the subgroup of $X/P$, and let $K$ be the stabilizer of $v_1^P$ in $G$.
Notice that $K\cong \mathbb{Z}_p^e:\mathbb{Z}_{(p^e-1)/4}$ is the unique subgroup of $G_{\langle v_1\rangle}\cong \mathbb{Z}_p^e:\mathbb{Z}_{(p^e-1)/2}$ with index $2$.
Let $g \in \SL_2(q)$ be the linear transformation such that  $v_1^{ g}=v_2$ and $v_2^{ g}=-v_1$, and let $\overline{g}$ be $gP/P \in G$.
Define
\[
\mathcal{G}_{(2,q)}=\Cos(G,K,K \overline{g} K).
\]
Notice that the intersection of a Single subgroup of $\PGL_2(q)$ and $\PSL_2(q)$ is a cyclic group of order $(q+1)/2$, and the normalizer of this cyclic group in $\PSL_2(q)$ is a dihedral group $\D_{q+1}$.
Let $H$ be such a dihedral group $\D_{q+1}$.
\end{example}

\begin{lemma}\label{lm:cG2q}
Use notations as above. Then $\PSigmaL_2(q)$ acts quasiprimitively on $\Ome$ and $H$ acts bi-regularly on $\Ome$,
and $\mathcal{G}_{(2,q)}$ is the unique (up to isomorphism) $G$-orbital graphs on $\Ome$ that is connected arc-transitive,
and $\mathcal{G}_{(2,q)}$ is $G$-arc-transitive and of valency $q$, and  $\Aut(\mathcal{G}_{(2,q)})\geq \ZZ_2\times \PSigmaL_2(q) $.
\end{lemma}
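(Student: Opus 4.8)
The plan is to make the $G$-action on $\Ome$ completely explicit, read off all the suborbits, and identify the resulting orbital graphs; quasiprimitivity and bi-regularity will then follow from order arithmetic, the valency and connectivity from the coset description, and the delicate point, uniqueness, from a suborbit count together with an explicit isomorphism between the two candidate graphs. First I would set up coordinates. Since $P$ is the group of square scalars and $q\equiv 1\pmod 4$ forces $-1$ to be a square, sending a $P$-orbit to the $1$-space containing it defines a $G$-equivariant $2$-to-$1$ surjection $\pi\colon\Ome\to\PG(1,q)$, each projective point splitting into a ``square'' half and a ``nonsquare'' half. I will write $\omega_{\infty,\pm}$ for the two $P$-orbits in $\langle v_1\rangle$ and $\omega_{x,\pm}$ (with $x\in\FF_q$) for the two in $\langle xv_1+v_2\rangle$, giving the $2(q+1)$ points of $\Ome$. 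Computing $|G|/|K|=|\PSL_2(q)|/|K|=2(q+1)=|\Ome|$ shows that $\soc(\PSigmaL_2(q))=\PSL_2(q)=G$ is transitive on $\Ome$; as $\PSigmaL_2(q)$ is almost simple with this socle as its unique minimal normal subgroup, every nontrivial normal subgroup contains $\PSL_2(q)$ and is therefore transitive, so $\PSigmaL_2(q)$ is quasiprimitive.

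For bi-regularity I would argue by orders. The point stabilisers in $G$ are the conjugates of $K\cong\ZZ_p^e{:}\ZZ_{(p^e-1)/4}$, of order $q(q-1)/4$; the hypothesis $q\equiv 5\pmod 8$ makes $(q-1)/4$ odd, so $|K|$ is odd, and one checks $\gcd(q+1,\,q(q-1)/4)=1$ from $\gcd(q+1,q)=1$ and $\gcd(q+1,(q-1)/4)=1$ (the latter since any common divisor divides $q-1$ and $q+1$, hence $2$, yet divides the odd number $(q-1)/4$). Consequently $H\cap K^g=1$ for every $g\in G$, so $H\cong\D_{q+1}$ is semiregular; since $|\Ome|/|H|=2$ it has exactly two orbits and is bi-regular.

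Next I would determine the suborbits of $K$, the stabiliser of $\alpha:=\omega_{\infty,+}$. A direct calculation of the actions of the unipotent and square-torus parts of $K$ shows that $K$ preserves the $\pm$ label and is transitive on each affine layer, yielding exactly four $K$-orbits: $\{\alpha\}$, $\{\omega_{\infty,-}\}$, $\Delta_2=\{\omega_{x,+}:x\in\FF_q\}$ and $\Delta_3=\{\omega_{x,-}:x\in\FF_q\}$, the last two of size $q$. All three nontrivial suborbits are self-paired (the pairs $\{\omega_{\infty,+},\omega_{\infty,-}\}$ for the first, and diagonal/antidiagonal involutions with square $-I$ in $\SL_2(q)$ for the others), so they give three undirected $G$-arc-transitive orbital graphs; the singleton suborbit gives a perfect matching of valency $1$, hence disconnected. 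Since $\overline g$ sends $\alpha$ to $v_2^P=\omega_{0,+}\in\Delta_2$, the graph $\mathcal{G}_{(2,q)}=\Cos(G,K,K\overline g K)$ is the orbital graph of $\Delta_2$, of valency $|K:K\cap K^{\overline g}|=q$ (here $K\cap K^{\overline g}$ is the part of the diagonal torus fixing both $v_1^P$ and $v_2^P$, of order $(q-1)/4$); connectedness follows from Lemma~\ref{Coset-gp} because $K$ contains the full unipotent subgroup $U$ and $\overline g U\overline g^{-1}$ is the opposite unipotent, so $\langle K,\overline g\rangle\supseteq\langle U,U^{\overline g}\rangle=G$.

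The step I expect to be the main obstacle is uniqueness: the orbital graph $\Gamma_3$ of $\Delta_3$ is also connected, by the identical $\langle U,U^{\overline w}\rangle=G$ argument for a suitable antidiagonal involution $\overline w$, so I must produce a graph isomorphism $\Gamma_2\cong\Gamma_3$ rather than rule $\Gamma_3$ out. The idea is to use the diagonal element $\delta=\mathrm{diag}(\mu,1)$: modulo $P$ it normalises $G=\PSL_2(q)$ and hence permutes the $G$-orbitals, and a short computation of its action on $\Ome$ shows it carries the base edge $(\alpha,\omega_{0,+})$ of $\Gamma_2$ to an edge $(\omega_{\infty,-},\omega_{0,+})$ lying in the orbital of $\Delta_3$; thus $\delta$ maps $\Gamma_2$ isomorphically onto $\Gamma_3$, and $\mathcal{G}_{(2,q)}$ is up to isomorphism the unique connected $G$-arc-transitive orbital graph. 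Finally, for the automorphism bound I would check that both the central involution $\tau$ (the image of the scalar $\mu I$ generating $Z/P\cong\ZZ_2$) and the field automorphism $\phi$ normalise $G$ and, by a direct check on $\Ome$, preserve the orbital of $\Delta_2$, so that $\langle G,\tau,\phi\rangle\le\Aut(\mathcal{G}_{(2,q)})$. Since $Z/P$ is central in $X/P$ and meets the image of $\mathrm{\Sigma L}_2(q)$ trivially, one has $\langle G,\tau,\phi\rangle=X/P\cong\ZZ_2\times\PSigmaL_2(q)$, giving $\Aut(\mathcal{G}_{(2,q)})\ge\ZZ_2\times\PSigmaL_2(q)$.
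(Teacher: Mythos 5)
Your proposal is correct and follows essentially the same route as the paper's proof: explicit determination of the four suborbits of $K$ (two fixed points and two of size $q$), transitivity and quasiprimitivity via the socle $\PSL_2(q)$, bi-regularity from coprimality of $|H|=q+1$ and $|K|=q(q-1)/4$, self-pairedness via involutions squaring to $-I\in P$, and uniqueness via the isomorphism between the two valency-$q$ orbital graphs induced by $\mathrm{diag}(\mu^{\pm 1},1)$, which is exactly the paper's element $\delta$. The only real difference is your connectivity argument (generation of $\PSL_2(q)$ by two opposite unipotent subgroups $\langle U,U^{\overline{g}}\rangle$), which is more elementary than the paper's appeal to the classification of maximal subgroups of $\PSL_2(q)$; both are valid.
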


\begin{proof}
We do computation in $X=\langle Z,\mathrm{\Sigma L}_2(q)\rangle $.
The stabilizer of $v_1^P $ in $\mathrm{\Sigma L}_2(q)$ is
$ (N  : M)  :  \langle  \phi \rangle$, where
\[
N=\left\{
 \begin{pmatrix}
1 & 0 \\
s & 1 \\
\end{pmatrix}  : s \in \mathbb{F}_{q} \right\} \cong \ZZ_p^e
,
M=\left\{
 \begin{pmatrix}
\lambda & 0 \\
0 & \lambda^{-1}\\
\end{pmatrix} :\lambda \in \langle \mu^2\rangle
\right\} \cong \ZZ_{(p^e-1)/2}.
\]
Since $|\mathrm{\Sigma L}_2(q)|/|\mathrm{\Sigma L}_2(q)_{v_1^P }|=2(q+1)$ and $|\Ome|=2(q+1)$, we see that $ \mathrm{\Sigma L}_2(q)$ acts transitively on $\Ome$.
Notice that  $v_1^P $  is also fixed by $y:=\mathrm{diag}\{u^2,1 \}=\mathrm{diag}\{u,u \}\cdot \mathrm{diag}\{u,u^{-1} \}$, where $\mathrm{diag}\{u,u \} \in Z$ and $\mathrm{diag}\{u,u^{-1}\} \in \SL_2(q)$.
Therefore, $X_{v_1^P} =\mathrm{\Sigma L}_2(q)_{v_1^P} :  \langle y\rangle$.
Since $\SL_2(q)_{v_1^P}=N : M$, we conclude that $G=\SL_2(q)$ also acts transitively on $\Ome$, and hence $\PSigmaL_2(q)$ is quasiprimitive  on $\Ome$.
Now, $K=G_{v_1^P}=\ZZ_{p}^e : \ZZ_{(q-1)/4}$, and so its order is coprime to $|H|= q+1 $, since $q\equiv 5\pmod{8}$.
This implies that  the group $H$ acts bi-regularly on $\Ome $.

It is clear that $ \mathrm{\Sigma L}_2(q)_{v_1^P}$ fixes $v_1^P$ and $(\mu v_1)^P$.
Note that $\Ome \setminus \{v_1^P,(\mu v_1)^P\}=\Ome_1\cup \Ome_2$, where
\[ \Ome_1=\{ (av_1+v_2)^P \rangle\mid a \in \mathbb{F}_{p^e} \},\Ome_2=\{ (\mu av_1+\mu v_2)^P \mid a \in \mathbb{F}_{p^e} \}. \]
Since $N$ is regular on both $\Ome_1$ and $\Ome_2$, and $M$ fixes both $\Ome_1$ and $\Ome_2$ setwise, we conclude that $\Ome_1$ and $\Ome_2$ are two orbits of $\mathrm{SL}_2(q)_{v_1^P}$ on $\Ome$.
Moreover, it is a straightforward verification to show that both $  \phi $ and $y$ fix  $\Ome_1$ and $\Ome_2$ setwise.
Therefor, $\SL_{2}(q)_{v_1^P}$ and $X_{v_1^P} $ has the same orbits on $\Ome$, which are $ v_1^P $, $(\mu v_1)^P$, $\Ome_1$ and $\Ome_2$.

Let $\mathit{\Sigma}_1$ and $\mathit{\Sigma}_2$ be the $\SL_2(q)$-orbital (or $X$-orbital) graphs corresponding to $\Ome_1$ and $\Ome_2$, respectively.
Recall that $g \in \SL_2(q)$ satisfies  $v_1^{ g}=v_2$ and $v_2^{ g}=-v_1$ and $\overline{g}=gP/P \in G $.
Let $h \in \SL_2(q)$ be  such that  $v_1^{ h}=\mu v_2$ and $v_2^{ g}=-\mu^{-1}v_1$, and let   $\overline{h}=hP/P \in G $.
Notice that  $-1 \in P$, since $ p^e \equiv 5\pmod{8}$.
Then $g$ swaps $v_1^P$ and $v_2^P$, and $h$ swaps $v_1^P$ and $(\mu v_2)^P$.
Since $v_1^{g^2}=-v_1$ and $v_2^{g^2}=-v_2$, and $v_1^{h^2}=-v_1$ and $v_2^{h^2}=-v_2$, we see that both $g^2$ and $h^2$ are in $Z$.
Therefore, the $\SL_2(q)$-orbitals (or $X$-orbitals) corresponding to $\Ome_1$ and $\Ome_2$ are self-paired, and hence  both $\mathit{\Sigma}_1$ and $\mathit{\Sigma}_2$ are $G$-arc-transitive (and $X/P$-arc-transitive) graphs of valency $p^e$ (notice that $|\Ome_1  |=|\Ome_2|=p^e$).
By Lemma~\ref{Coset-gp}, $\mathit{\Sigma}_1=\mathcal{G}_{(2,q)}=\Cos(G,K,K\overline{g} K)$, and  $\mathit{\Sigma}_2=\Cos(G,K,K \overline{h} K)$.

Let $\delta \in \GL_2(q)$ be such that $v_1^\delta=u^{-1}v_1$ and  $v_2^\delta= v_2$.
Direct verification shows that $(gP)^{\delta}= hP $ and $\sigma $ normalizes $X_{v_1^P}$.
It follows from Lemma~\ref{Coset-gp} that $\sigma$ induces an isomorphism between $\mathit{\Sigma}_1$ and $\mathit{\Sigma}_2$.
Moreover, by the classification of maximal subgroups of $\mathrm{PSL}_2(q)$ we conclude that $\langle G_{v_1^P},  \overline{g}\rangle=G$, and then $\langle (X/P)_{v_1^P}, \overline{g}\rangle=G$.  \qed
\end{proof}

\begin{example}\label{ex:d>3q}
Let $V$ be a vector space of dimension $d$ over $\mathbb{F}_q$, where $d\geq 3$ is an odd integer and  $q=p^e $ is an odd prime power.
Fix a basis $(v_1,v_2,\ldots,v_d)$ of $V$ and let $\iota$ be the inverse-transpose map with respect to this basis.
Let $X=\GammaL_d(q) : \langle \iota\rangle $ and let $X^{+}=\GammaL_d(q)=\GL_d(q):\langle \phi\rangle$, where  $\phi$ is defined as in Example~\ref{ex:2q}.
Let $\mu$ be a generator of $\mathbb{F}_q^{*} $ and let  $P=\langle \mathrm{diag}(\mu^2,\mu^2,\ldots,\mu^2)  \rangle\cong \ZZ_{(q-1)/2}$ be the subgroup with index $2$ of the center of $X^+$.
Let $\Ome $ be the set of orbits of $P$ on $V\setminus \{0\}$ $($so $|\Ome |=2(q^d-1)/(q-1))$ and consider the action of $X^{+}$ on $\Ome$  by $
(v^P)^x=(v^x)^{P}$ for all $x \in X^{+}$ and $v \in V\setminus \{ 0\}$.
This action is with kernel $P$, and $X^{+}/P\cong \ZZ_2\times \PGammaL_d(q)$, and $X /P\cong \ZZ_2\times (\PGammaL_d(q):\langle \tau\rangle) $, where $\PGammaL_d(q):\langle \tau\rangle=\Aut(\PSL_d(q))$.

Let $G=\GL_d(q)/P \cong \ZZ_2 \times \PGL_d(q) $ be the subgroup of $X^{+}/P$, and let $K$ be $(G_{v_1^P})^{\iota}$ and let $\Del$ be the set of right $K$-subsets in $G$ $($so $|\Del|=|\Ome|)$.
We shall show in Lemma~\ref{lm:cGdq} that $K$ has three orbits on $\Ome$, namely, $\Ome_i$ for all $i\in \{1,2,3\}$, and $|\Ome_1|=2(q^{d-1}-1)/(q-1)$, and $|\Ome_2|=|\Ome_3|=q^{d-1}$.
For every $\Ome_i$ with $i\in \{1,2,3\}$, define a graph $\mathcal{G}_{(d,q)}^{(i)}$ to be a bipartite graph with two parts $\Del$  and $\Ome$ as vertex set, and with $ \{K,\alpha\}^G$ for some $\alpha \in \Ome$ as edge set (notice the edge set is independent to the choice of $\alpha$ in $\Ome$).
Then every $\Ome_i$   is $G$-edge-transitive.
Define the action of $\iota$ on $\Ome \cap \Del$ by, for every $\alpha \in \Ome$ and $\beta \in \Del$, $\beta^\iota=\alpha$ if and only if $G_{\beta}^{\iota}=G_{\alpha}$.
Then  every $\Ome_i$   is  $G:\langle \tau\rangle $-arc-transitive.
Notice that $G_{v_1^P}$ is a subgroup with index $2$ of $ G_{v_1}$.
By Lemma~\ref{lm:BPGd-1q}, we conclude that every $\mathcal{G}_{(d,q)}^{(i)}$   admits a bi-regular $\mathrm{D}_{2(q^d-1)/(q-1)}$ that is contained in $G:\langle \tau\rangle $.
\end{example}

\begin{lemma}\label{lm:cGdq}
The following statements hold.
\begin{itemize}
\item[\rm (i)] $\mathcal{G}_{(d,q)}^{(2)}\cong \mathcal{G}_{(d,q)}^{(3)}$;
\item[\rm (ii)] for every $i\in \{1,2\}$, the graph $\mathcal{G}_{(d,q)}^{(i)}$ is connected, and $\Aut(\mathcal{G}_{(d,q)}^{(i)})\geq X/P\cong \ZZ_2\times \Aut(\PSL_d(q)) $;
\item[\rm (iii)] the group $\PSL_d(q)$ acts transitively on both $\Del$ and $\Ome$, and $K\cap \PSL_d(q)$ has the same orbits on $\Ome$ as $K$.
\end{itemize}
\end{lemma}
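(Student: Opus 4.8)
The plan is to establish the three statements by analyzing the action of $G = \GL_d(q)/P$ and its subgroup $\PSL_d(q)$ on $\Ome$ via the stabilizer $K = (G_{v_1^P})^\iota$, using the structure theory of Singer cycles and $\GammaL_1$-normalizers developed in the preliminaries. For part (iii), the key is to compare orbit lengths: since $\PSL_d(q)$ is transitive on the projective points and hyperplanes of $\PG(d-1,q)$, I would first show that $\PSL_d(q)/P$ (or rather its image) is transitive on both $\Del$ and $\Ome$. Transitivity on $\Ome$ follows because $P$-orbits on $V \setminus \{0\}$ correspond to cosets of $\langle \mu^2 \rangle$-scaled vectors, and $\SL_d(q)$ already acts transitively on nonzero vectors up to the center; the index-$2$ subtlety (passing from $q-1$ to $(q-1)/2$ scalars) is exactly what doubles the projective point set to give $|\Ome| = 2(q^d-1)/(q-1)$. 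To see that $K \cap \PSL_d(q)$ has the same orbits on $\Ome$ as $K$, I would argue that $K/(K \cap \PSL_d(q))$ embeds into the (cyclic) outer part $\PGL_d(q)/\PSL_d(q) \times \ZZ_e \times \ZZ_2$, and that these outer elements either already lie in $K \cap \PSL_d(q) \cdot (\text{point stabilizer machinery})$ or fix each $K$-orbit setwise; concretely, the diagonal and field automorphisms permute the three orbits $\Ome_1, \Ome_2, \Ome_3$ without merging them, since $|\Ome_1| \neq |\Ome_2| = |\Ome_3|$ by the stated orbit lengths.

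For part (i), the isomorphism $\mathcal{G}_{(d,q)}^{(2)} \cong \mathcal{G}_{(d,q)}^{(3)}$ should come from producing an automorphism of $X/P$ (or an element normalizing $K$) that swaps $\Ome_2$ and $\Ome_3$ while fixing $K$, and then invoking Lemma~\ref{Coset-gp}(a) in its bipartite-coset-graph form. The natural candidate is a diagonal element $\delta \in \GL_d(q)$ acting by a suitable scalar on the first basis vector, analogous to the element $\delta$ used in the proof of Lemma~\ref{lm:cG2q}; since $\Ome_2$ and $\Ome_3$ differ precisely by multiplication by $\mu$ (the two ``halves'' of the projective fibre that $P$ splits into), such a $\delta$ interchanges them. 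Establishing that $\delta$ genuinely normalizes $K = (G_{v_1^P})^\iota$ and induces a graph isomorphism is the technical heart of this part.

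For part (ii), connectedness amounts to showing $\langle K, g \rangle = G$ (equivalently $\langle K, g \rangle / \text{center} = \PGL_d(q)$) for an appropriate swapping element $g$, which I would derive from the classification of maximal subgroups of $\PGL_d(q)$: the subgroup generated by a point-stabilizer-type subgroup $K$ together with an element moving a point out of its hyperplane cannot be contained in any maximal parabolic, nor in the stabilizer of the relevant incidence structure, hence must be everything. The inclusion $\Aut(\mathcal{G}_{(d,q)}^{(i)}) \geq X/P \cong \ZZ_2 \times \Aut(\PSL_d(q))$ then follows because $X/P$ acts arc-transitively (as noted in Example~\ref{ex:d>3q}, $G{:}\langle \tau \rangle$ is arc-transitive and the $\ZZ_2$ factor acts as a graph automorphism), so $X/P$ embeds into the full automorphism group.

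I expect the main obstacle to be part (iii), specifically the claim that $K \cap \PSL_d(q)$ and $K$ have identical orbits on $\Ome$ rather than merely $K \cap \PSL_d(q)$ having orbits that refine those of $K$. The delicate point is controlling how the diagonal (outer $\PGL/\PSL$) and inverse-transpose components of $K$ act across the three orbits: one must verify that every coset representative of $K$ over $K \cap \PSL_d(q)$ stabilizes each $\Ome_i$ setwise, which requires carefully tracking the scalar action of $\mu$ versus $\mu^2$ on the fibres and confirming the orbit-length computation $|\Ome_1| = 2(q^{d-1}-1)/(q-1)$, $|\Ome_2| = |\Ome_3| = q^{d-1}$ promised in the statement. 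This mirrors, but is more intricate than, the analogous orbit analysis carried out explicitly in the proof of Lemma~\ref{lm:cG2q}.
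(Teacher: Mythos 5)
Your proposal has genuine gaps in parts (i) and (iii); part (ii) is essentially the paper's own route (connectedness via the maximality of the parabolic $\GL_d(q)_{\langle v_2,\ldots,v_d\rangle}$, in which the lift of $K$ has index $2$, plus checking that $\phi$ and $\iota$ behave compatibly).

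For part (i), the mechanism you propose cannot produce the isomorphism. The edge set of $\mathcal{G}_{(d,q)}^{(i)}$ is by definition a single $G$-orbit of edges, so every element of $G$ is an automorphism of each of the two graphs; in particular the diagonal element $\mathrm{diag}(\mu,1,\ldots,1)$, which indeed normalizes $K$ and swaps $\Ome_2$ with $\Ome_3$, is an automorphism of $\mathcal{G}_{(d,q)}^{(2)}$, not an isomorphism onto $\mathcal{G}_{(d,q)}^{(3)}$. More generally, any automorphism $\sigma$ of $G$ fixing $K$ and $G_{v_1^P}$ setwise maps the edge set $\{\{Kx,\,G_{v_1^P}x\}: x\in G\}$ of $\mathcal{G}_{(d,q)}^{(2)}$ to itself, and here the inner and field automorphisms all fix both relevant double cosets; so invoking Lemma~\ref{Coset-gp}(a) with such a $\sigma$ goes nowhere. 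This is precisely where the bipartite situation differs from Lemma~\ref{lm:cG2q}, where both graphs are coset graphs on the \emph{same} subgroup $K$ and conjugation by $\delta$ moves the double coset $K\overline{g}K$ to $K\overline{h}K$. The paper's key idea, which your proposal is missing, is to use a permutation that \emph{centralizes} $G$ rather than normalizes it: the map $\rho$ fixing $\Del$ pointwise and sending $v^P\mapsto (\mu v)^P$ on $\Ome$ satisfies $\rho g=g\rho$ for all $g\in \GL_d(q)$, hence carries the $G$-orbit $\{K,v_1^P\}^G$ to $\{K,(\mu v_1)^P\}^G$, which is exactly the desired isomorphism.

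For part (iii), your argument for ``same orbits'' addresses the wrong inclusion. Since $K\cap \PSL_d(q)\leq K$, each $K$-orbit is automatically a union of $(K\cap \PSL_d(q))$-orbits, and every element of $K$ trivially stabilizes each $K$-orbit $\Ome_i$ setwise; so verifying that coset representatives of $K$ over $K\cap \PSL_d(q)$ do not ``merge'' the $\Ome_i$ establishes nothing. What must be proved is that the orbits do not \emph{split}, i.e.\ that $K\cap\PSL_d(q)$ is still transitive on each $\Ome_i$; no counting argument based on $|\Ome_1|\neq|\Ome_2|=|\Ome_3|$ can exclude, say, $\Ome_2$ breaking into two orbits of size $q^{d-1}/2$. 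The paper proves this transitivity directly: it computes $(\SL_d(q)_{v_1^P})^{\iota}=N_1:(M_1:R_1)$ explicitly, observes that $M_1\cong \SL_{d-1}(q)$ is transitive on the nonzero vectors of $\langle v_2,\ldots,v_d\rangle$ (giving $\Ome_1$), and uses $N_1$ together with $R_1$ for $\Ome_2$ and $\Ome_3$. You also omit the small but necessary step that the center of $\SL_d(q)$ lies in $P$ (this is where $d$ odd enters), which is what identifies the image of $\SL_d(q)$ in $G$ with $\PSL_d(q)$ and reduces the whole of part (iii) to a computation inside $\SL_d(q)$.
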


\begin{proof}
Recall that $G=\GL_d(q)/P$).
Notice that $\GL_d(q)_{v_1^P}= N:(M\times R) $, where
\begin{align*}
N=&\left\{
 \begin{pmatrix}
1 & \textbf{0} \\
\textbf{a}^{\mathsf{T}} & I_{n-1}\\
\end{pmatrix}  : I_{n-1}=\mathrm{diag}(1,\ldots,1) \in \GL_{d-1}(q),\ \textbf{a}  \in \mathbb{F}_{q}^{d-1} \right\} \cong  [q^{d-1}],\\
M=&\left\{
 \begin{pmatrix}
1 & \textbf{0} \\
\textbf{0}^{\mathsf{T}} & \textbf{b}\\
\end{pmatrix}  : \textbf{b} \in \GL_{d-1}(q) \right\} \cong \GL_{d-1}(q),\\
R=&\left\{
 \begin{pmatrix}
\lambda & \textbf{0} \\
\textbf{0}^{\mathsf{T}} & I_{n-1} \\
\end{pmatrix}    :  I_{n-1}=\mathrm{diag}(1,\ldots,1) \in \GL_{d-1}(q), \lambda \in \langle \mu^2 \rangle \right\} \cong \ZZ_{(q-1)/2} .
\end{align*}
It follows that $(\GL_d(q)_{v_1^P})^{\iota}= N_1:(M\times R) $, where
\begin{align*}
N_1=&\left\{
 \begin{pmatrix}
1 & \textbf{a} \\
\textbf{0}^{\mathsf{T}} & I_{n-1}\\
\end{pmatrix}  : I_{n-1}=\mathrm{diag}(1,\ldots,1) \in \GL_{d-1}(q),\ \textbf{a} \in \mathbb{F}_{q}^{d-1} \right\} \cong [q^{d-1}].
\end{align*}
Consider the orbits of $ (\GL_d(q)_{v_1^P})^{\iota} $ on $\Ome$.
Since $ (\GL_d(q)_{v_1^P})^{\iota} $ stabilizes the subspace $\langle v_2,\ldots,v_d\rangle$ and $M$ is transitive on all non-zero vectors of $\langle v_2,\ldots,v_d\rangle$, we see that $(\GL_d(q)_{v_1^P})^{\iota} $ has an orbit
\begin{align*}
\Ome_1: =\{ w^P: w \in  \langle v_2,\ldots,v_d\rangle\}.
\end{align*}
Clearly, $|\Ome_1|=2(q^{d-1}-1)/(q-1)$.
Notice that $N_1$ is transitive on all vectors in $\{ v_1+w: w \in w \in  \langle v_2,\ldots,v_d\rangle \}$.
Combine with the action of $R$, we see that $ (\GL_d(q)_{v_1^P})^{\iota} $ has two orbits on $V\setminus \langle  v_2,\ldots,v_d\rangle$, which are $\{ \lambda v_1+w: w  \in  \langle v_2,\ldots,v_d\rangle,\ \lambda \in \langle \mu^2\rangle \}$ and $\{ \lambda v_1+w: w  \in  \langle v_2,\ldots,v_d\rangle,\ \lambda \in  \langle \mu \rangle \setminus \langle \mu^2\rangle \}$.
Therefore, $ (\GL_d(q)_{v_1^P})^{\iota} $ has two orbits on $\Ome \setminus \Ome_1$, which are
\begin{align*}
\Ome_2:=&\{  (v_1+w)^P: w  \in  \langle v_2,\ldots,v_d\rangle \}, \\
\Ome_3:=&\{ (\mu v_1+w)^P: w  \in  \langle v_2,\ldots,v_d\rangle \}.
\end{align*}
Clearly, $|\Ome_2|=|\Ome_3|=q^{d-1}$.

Now we show $\mathcal{G}_{(d,q)}^{(2)}\cong \mathcal{G}_{(d,q)}^{(3)}$.
Consider the map $ \rho $ that fixes every point in $\Del$, and acts on $\Ome $ by $(v^P)^{\rho}=(\mu v)^P$ for every $v^P \in \Ome$.
Then $\Ome_2^{\rho}=\Ome_3^{\rho}$, and
\[ (v^P)^{\rho g}=((\mu v)^P)^g= (\mu v^g)^P =(v^P)^{g\rho } \text{ for every } g \in \GL_d(q). \]
It follows that
\[
\{K,v_1^P \}^{G\rho}=\{K, v_1^P \}^{\rho G}=\{K,(\mu v_1)^P \}^G,
\]
where $K \in \Del$ and $v_1^P , (\mu v_1)^P \in \Ome$.
Recall that the edge set of $\mathcal{G}_{(d,q)}^{(2)} $ is $\{K, v_1^P \}^G$ and that of  $\mathcal{G}_{(d,q)}^{(3)} $ is $\{K, (\mu v_1)^P \}^G$.
Therefore, $\rho$ induces an isomorphism from $\mathcal{G}_{(d,q)}^{(2)} $ to $\mathcal{G}_{(d,q)}^{(3)} $.

Next, we prove that both $\mathcal{G}_{(d,q)}^{(1)}$ and $  \mathcal{G}_{(d,q)}^{(2)}$ arc connected.
Let $g \in \SL_d(q)$  be such that
\begin{align*}
 &v_1^g=v_2,\ v_2^g=v_1,\ v_3^g=-v_3,\  w^g=w \text{ for all } w \in \{v_1,v_2,\ldots,v_d\}\setminus \{v_1,v_2,v_3\}.
\end{align*}
Then $((\GL_d(q)_{v_1^P})^{\iota} )^{\iota g}=\GL_d(q)_{v_2^P}$  and $((\GL_d(q)_{v_1^P})^{\iota} )^{\iota}=\GL_d(q)_{v_1^P}$.
By Lemma~\ref{Coset-gp},  it is suffices to show $\langle  (\GL_d(q)_{v_1^P})^{\iota}, \iota g\rangle=\GL_d(q)=\langle  (\GL_d(q)_{v_1^P})^{\iota}, \iota \rangle$.
Notice that $(\GL_d(q)_{v_1^P})^{\iota}$ is a subgroup of index $2$ of $\GL_d(q)_{\langle v_2,\ldots,v_d\rangle}$ and $\GL_d(q)_{\langle v_2,\ldots,v_d\rangle}$ is a maximal parabolic subgroup of $\GL_d(q)$.
Since both $ \GL_d(q)_{v_1^P} = (\GL_d(q)_{v_1^P})^{\iota^2}$ and $  (\GL_d(q)_{v_1^P})^g = (\GL_d(q)_{v_1^P})^{\iota^2g}$ do not stabilize $\langle v_2,\ldots,v_d\rangle $, we conclude that $\langle  (\GL_d(q)_{v_1^P})^{\iota}, \iota g\rangle=\GL_d(q)=\langle  (\GL_d(q)_{v_1^P})^{\iota}, \iota \rangle$, as required.

 Recall  that $\GammaL_d(q)=\GL_d(q):\langle \phi\rangle$.
It is a straightforward verification that $\phi $ stabilizes $v_1^P$, and normalizes $\GL_d(q)_{v_1^P}$ and commutes with both $\iota$ and $g$.
This implies that $\GammaL_d(q)$ is also transitive on both $\Del$ and $\Ome$, and $(\GammaL_d(q)_{v_1^P})^{\tau}$ has the same orbits on $\Ome$ as $(\GL_d(q)_{v_1^P})^{\tau}$.
Therefore, $\Aut(\mathcal{G}_{(d,q)}^{(1)})\geq X/P=\ZZ_2\times \Aut(\PSL_d(q))$.

Now we show that the group $\PSL_d(q)$ in $G=\ZZ_2\times \PGL_2(q)$ is transitive  on both $\Ome$  and $\Del$, and the orbits of $(\PSL_d(q)_{v_1^P})^{\iota}$ are same as that of $(\GL_d(q)_{v_1^P})^{\tau}$.
Notice that the center of $\SL_d(q) $, namely $Z_1$, is generated by $\mathrm{diag}\{ \mu^{k}, \ldots,\mu^{k}\}$, where $k=(q-1)/(d,q-1) $.
Since $d$ is odd, we have $Z_1\leq P$, which implies that $\SL_d(q) /(P \cap \SL_d(q)) =\SL_d(q)/Z_1\cong \PSL_d(q)$.
Therefore, we only need to show the assertion is true for $\SL_d(q)$.
Notice that $\SL_d(q)_{v_1^P}$ is $N:(M_1:R_1)$, where
\begin{align*}
N=&\left\{
 \begin{pmatrix}
1 & \textbf{0} \\
\textbf{a}^{\mathsf{T}} & I_{n-1}\\
\end{pmatrix}  : I_{n-1}=\mathrm{diag}(1,\ldots,1) \in \GL_{d-1}(q),\ \textbf{a}  \in \mathbb{F}_{q}^{d-1} \right\} \cong [q^{d-1}],\\
M_1=&\left\{
 \begin{pmatrix}
1 & \textbf{0} \\
\textbf{0}^{\mathsf{T}} & \textbf{b}\\
\end{pmatrix}  : \textbf{b} \in \SL_{d-1}(q) \right\} \cong \SL_{d-1}(q),\\
R_1=&\left\{
 \begin{pmatrix}
\lambda & \textbf{0} \\
\textbf{0}^{\mathsf{T}} & \textbf{c} \\
\end{pmatrix}    :  \textbf{c}=\mathrm{diag}(\lambda^{-1},1, \ldots,1) \in \GL_{d-1}(q), \lambda \in \langle \mu^2 \rangle  \right\}\cong \ZZ_{(q-1)/2}.
\end{align*}
Since $|\GL_d(q)|/|\SL_d(q)|=q-1$ and $|\GL_d(q)_{v_1^P}|/|\SL_d(q)_{v_1^P}|=q-1$, it follows that
$\SL_d(q)$ is transitive on both $\Ome$  and $\Del$.
Notice that $(\SL_d(q)_{v_1^P})^{\iota}$ is $N_1:(M_1:R_1)$.
Since $(\SL_d(q)_{v_1^P})^{\iota}$ stabilizes  $\langle v_2,\ldots,v_d\rangle$ and the group   $M_1\cong\SL_{d_1}(q)$ is transitive on $\langle v_2,\ldots,v_d\rangle \setminus \{0\}$, we conclude that $\Ome_1$ is an orbit of $(\SL_d(q)_{v_1^P})^{\iota}$.
Considering the actions of $N_1$ and $R_1$, one can see that $\Ome_2$ and $\Ome_3$ are also orbits of $(\SL_d(q)_{v_1^P})^{\iota}$. \qed
\end{proof}

\section{Proof of Theorem~\ref{Thm-2}}

We will prove Theorem~\ref{Thm-2} in this final section.

Let $\Ga$ be a connected $G$-arc-transitive bi-Cayley graph on a dihedral group $H=\l a\r:\l b\r\cong\D_{2n}$
with $G\ge H$,
and let $\alpha$ be a vertex of $\Ga$. 

\begin{lemma}\label{Ver-QP}
If $G$ is quasiprimitive on $V\Ga$, then
either $\Ga$ is the complete graph $\K_{4n}$, or $\Ga$  is isomorphic to one of the graphs in Table~\ref{VQ-gps}.
\end{lemma}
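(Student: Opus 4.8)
The plan is to reduce the whole statement to Theorem~\ref{Thm-1}. Since $\Ga$ is a bi-Cayley graph on $H\cong\D_{2n}$, the subgroup $H$ is bi-regular on $V\Ga$, and as $G\geq H$ is quasiprimitive, Theorem~\ref{Thm-1} lists every possibility for $(G,H)$, together with $G_\a$ in the imprimitive case. Because $\Ga$ is connected and $G$-arc-transitive, Lemma~\ref{Coset-gp}(b) shows $\Ga\cong\Cos(G,G_\a,G_\a g G_\a)$ for any $g$ interchanging $\a$ with a neighbour; equivalently, $\Ga$ is a connected, self-paired $G$-orbital graph. Thus for each line of Theorem~\ref{Thm-1} it suffices to list the non-trivial $G_\a$-suborbits, keep the self-paired ones, and discard those whose orbital graph is disconnected. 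I would organise the case analysis by the rank (number of $G_\a$-orbits) of the action.

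First I would dispose of the cases in which $G$ is $2$-transitive on $V\Ga$. This accounts for parts (1)(ii)--(v) of Theorem~\ref{Thm-1}, as well as every affine group in (1)(i) whose stabiliser of the zero vector is transitive on the non-zero vectors, namely $\AGL_1(8)$, $\AGammaL_1(8)$, $\AGL_3(2)$, $\AGL_4(2)$, $\AGL_5(2)$ and $\ZZ_2^4{:}\S_6$ (here $\S_6\cong\Sp_4(2)$ is transitive on the $15$ non-zero vectors). In each of these cases $G_\a$ has a single non-trivial orbit, so the only $G$-arc-transitive graph on $V\Ga$ is the complete graph $\K_{4n}$.

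The remaining primitive groups are the two rank-$3$ affine groups $\ZZ_2^4{:}\mathrm{SO}_4^{\pm}(2)$ in (1)(i). For these $G_\a$ has exactly two non-trivial orbits, of unequal length, so both associated orbitals are self-paired and, $G$ being primitive, both give connected graphs; as recorded in Example~\ref{ex:affinepolargraph}, these are the affine polar graphs $\mathrm{VO}_4^{\pm}(2)$ and their complements $\overline{\mathrm{VO}_4^{\pm}(2)}$. For the imprimitive triples in Theorem~\ref{Thm-1}(2)(i) the relevant groups are $\A_5$ and $\S_5$ acting on $20$ points, and here I would simply invoke the direct computations in Examples~\ref{ex:A5Z3} and~\ref{ex:S5Z6S3}, which show that the connected $G$-arc-transitive orbital graphs are exactly $\mathrm{F020A}$ for $\A_5$ and $\GG_{20}^{(1)},\GG_{20}^{(2)},\GG_{20}^{(3)}$ for $\S_5$.

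The hard part is the infinite family of Theorem~\ref{Thm-1}(2)(ii), with $G=\PSL_2(q).f$ and $q=p^e\equiv5\pmod 8$ acting on $2(q+1)$ points and $G_\a=\ZZ_p^e{:}\ZZ_{(q-1)/4}.f$; here no $2$-transitivity or rank-$3$ shortcut is available, so I must genuinely compute the suborbits and decide self-pairing and connectivity. This is precisely the content of Lemma~\ref{lm:cG2q}: it exhibits the four $G_\a$-orbits, shows that the two orbits of length $q$ give self-paired, mutually isomorphic orbitals that together with $G_\a$ generate $G$, and hence produces the unique connected $G$-arc-transitive graph $\mathcal{G}_{(2,q)}$ of valency $q$. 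Collecting the outputs of all cases yields the claimed dichotomy: either $\Ga\cong\K_{4n}$, or $\Ga$ is one of $\mathrm{VO}_4^{\pm}(2)$, $\overline{\mathrm{VO}_4^{\pm}(2)}$, $\mathrm{F020A}$, $\GG_{20}^{(1)},\GG_{20}^{(2)},\GG_{20}^{(3)}$, $\mathcal{G}_{(2,q)}$, which are the entries of Table~\ref{VQ-gps}.
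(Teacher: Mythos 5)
Your proposal is correct and follows essentially the same route as the paper: reduce to Theorem~\ref{Thm-1} via the bi-regularity of $H$, dispose of the $2$-transitive cases as $\K_{4n}$, identify the rank-$3$ affine and degree-$20$ cases through the constructed Examples, and settle the infinite family $\PSL_2(q).f$ by the uniqueness statement of Lemma~\ref{lm:cG2q}. Two details you gloss over that the paper handles explicitly: in case (2)(ii) one must first show the $G$-action on $V\Ga$ is permutationally equivalent to the action on $\Ome$ of Example~\ref{ex:2q} (Theorem~\ref{Thm-1} gives $G_\a$ only up to isomorphism, and the paper uses that $\PSL_2(q)$ has a single conjugacy class of subgroups $\ZZ_p^e:\ZZ_{(p^e-1)/4}$), and Example~\ref{ex:S5Z6S3} only treats the $\ZZ_6$-coset action of $\S_5$, whereas the inequivalent $(\S_5,\S_3)$ action is a separate case, resolved in the paper by an additional {\sc Magma} computation showing $\Ga\cong\GG_{20}^{(2)}$ or $\GG_{20}^{(3)}$.
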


\begin{table}[htbp]
\caption{Vertex-quasiprimitive arc-transitive bi-dihedrants}\label{VQ-gps}
\[
\begin{array}{lllllll}
\hline
\text{Line} & (G,G_\a)  & \text{Valency} & \Aut\Ga & \Ga\\
\hline
1 & (\ZZ_{2}^4 : \mathrm{SO}_{4}^{+}(2),\mathrm{SO}_{4}^{+}(2)) & 6 &G& \mathrm{VO}^{+}_4(2)\\
2 & (\ZZ_{2}^4 : \mathrm{SO}_{4}^{+}(2),\mathrm{SO}_{4}^{+}(2)) & 9 &G&  \overline{\mathrm{VO}^{+}_4(2)}\\

3 & (\ZZ_{2}^4 :  \mathrm{SO}_{4}^{-}(2),\mathrm{SO}_{4}^{-}(2)) & 5  & G&   \mathrm{VO}^{-}_4(2)\\
4 & (\ZZ_{2}^4 :  \mathrm{SO}_{4}^{-}(2),\mathrm{SO}_{4}^{-}(2)) &  10 & G& \overline{\mathrm{VO}^{-}_4(2)}\\
5 & (\A_5,\ZZ_3) & 3 & \ZZ_2\times \A_5& \mathrm{F020A}   \\
6 & (\S_5 ,\ZZ_6) & 6 & \ZZ_2\times \S_5& \GG_{20}^{(1)}\\
7 & (\S_5,\S_3) & 6 & \ZZ_2\times \S_5& \GG_{20}^{(2)} \\
8 & (\S_5,\S_3),(\S_5 ,\ZZ_6) & 6 & \ZZ_2^{10}:\S_5&  \GG_{20}^{(3)}\\
9 & (\PSL_2(p^e).f,\ZZ_p^e : \ZZ_{ (p^e-1)/4}.f), & p^e & \geq \ZZ_2 \times \mathrm{ P\Sigma L}_2(p^e)& \mathcal{G}_{(2,p^e)}   \\
 & p^e \equiv 5(\mathrm{mod }~8),\ f\mid e &  &  &   \\
\hline
\end{array}
\]
\end{table}

\begin{proof}
First suppose that $G$ is primitive.
Then $(G,G_\a,H)$ satisfies part (1) of Theorem~\ref{Thm-1}.
If $G$ is 2-transitive, then $\Ga=\K_{4n}$ is a complete graph.
Assume $G$ is not 2-transitive. Then $(G,G_\a)=(\ZZ_{2}^4 : \mathrm{SO}_{4}^{+}(2),\mathrm{SO}_{4}^{+}(2))$ or
$(\ZZ_{2}^4:\mathrm{SO}_{4}^{-}(2),\mathrm{SO}_{4}^{-}(2))$.
By Example~\ref{ex:affinepolargraph},  $\Ga$ is the affine polar graphs $ \mathrm{VO}^{\pm}_4(2)$ or their complementary graphs.

Now suppose that $G$ is imprimitive.
Then $G$ satisfies part (2) of Theorem~\ref{Thm-1}.
If $(G,G_\a,n)=(\A_5,\mathbb{Z}_3,5)$, then by Example~\ref{ex:A5Z3}, $\Ga$ is the graph $\mathrm{F020A}$.
If $(G,G_\a,n)=(\S_5,\ZZ_6,5)$  by Lemma~\ref{ex:S5Z6S3}, $\Ga=\mathcal{G}_{20}^{(i)}$ for some $i\in \{1,2,3\}$.
If $(G,G_\a,n)=(\S_5,\mathrm{S}_3,5)$, then computation in {\sc Magma}~\cite{Magma} shows that $\Ga=\mathcal{G}_{20}^{(2)}$ or $\mathcal{G}_{20}^{(3)}$.

It remains to consider part (2)(ii) of Theorem~\ref{Thm-1}, that is, $G=\PSL_{2}(p^e).f\leq \mathrm{P\Sigma L}_2(p^e)$ with $f\mid e$
and $p^e \equiv 5\pmod{8}$, and $G_\alpha=\ZZ_p^e : \ZZ_{(p^e-1)/4}.f$, and $H $ is  in $\PSL_2(p^e)$, and $n=(p^e+1)/2$.
Recall the notations in Example~\ref{ex:2q}.
Notice that $\PSL_2(q)_{v_1^P}\cong \ZZ_p^e : \ZZ_{(p^e-1)/4} \cong \PSL_2(q)_{\alpha}$, and $\PSL_2(q)$ has only one conjugate class of subgroups isomorphic to $\ZZ_p^e : \ZZ_{(p^e-1)/4}$.
Hence the action of $\PSL_2(q)$ on $V\Ga$ is equivalent to that on $\Ome$.
Therefore, $\Ga \cong \mathcal{G}_{(2,q)}$ by Example~\ref{ex:2q}. \qed
\end{proof}

Next we treat the vertex biquasiprimitive case.
Suppose $G$ is biquasiprimitive on $V\Ga$.
Then $|V\Ga|=4n$, $\Ga$ is a bipartite graph, say with bipartitions $\Del_1$ and $\Del_2$.
Denote by $G^+$ the stabilizer of $G$ fixing $\Del_1$,
and $H^+=H\cap G^+$.
Then $G_\a=G^+_\a$ for $\a\in V\Ga$, $G^+$ is of index 2 in $G$
and $G=\l G^+,g\r$ for some $g\in G$ such that $g^2\in G^+$.

\begin{lemma}\label{unfaithful}
If $G^+$ acts unfaithfully on $\Del_1$ or $\Del_2$, then $\Ga=\K_{2n,2n}$.
\end{lemma}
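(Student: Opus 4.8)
The plan is to show that unfaithfulness manufactures a nontrivial normal subgroup of the \emph{whole} group $G$ whose two orbits are precisely $\Del_1$ and $\Del_2$, and then to use arc-transitivity to force each vertex to be joined to the entire opposite part.

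First I would fix notation and reduce by symmetry. Write $N_1$ for the kernel of the action of $G^+$ on $\Del_1$ and $N_2$ for the kernel of $G^+$ on $\Del_2$. Since $G^+\trianglelefteq G$, the element $g$ normalizes $G^+$, and since $g$ interchanges $\Del_1$ and $\Del_2$, conjugation by $g$ carries the pointwise stabilizer of $\Del_1$ to that of $\Del_2$; hence $N_2=N_1^{\,g}$ and in particular $|N_1|=|N_2|$. Thus $G^+$ is unfaithful on $\Del_1$ if and only if it is unfaithful on $\Del_2$, and we may assume $N_1\neq 1$.

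Next I would assemble these kernels into a normal subgroup of $G$. Both $N_1$ and $N_2$ are normal in $G^+$ as kernels of actions, and $N_1\cap N_2$ is the pointwise stabilizer of $\Del_1\cup\Del_2=V\Ga$, which is trivial because $G\leq\Aut\Ga$ is faithful on $V\Ga$. Two normal subgroups of $G^+$ meeting trivially centralize each other, so $N:=N_1N_2=N_1\times N_2$. As $N^{\,g}=N_1^{\,g}N_2^{\,g}=N_2N_1=N$ and $N\trianglelefteq G^+$, the subgroup $N$ is normalized by $\langle G^+,g\rangle=G$, so $N\trianglelefteq G$ and $N\neq 1$. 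Now I invoke biquasiprimitivity: being a nontrivial normal subgroup of the biquasiprimitive group $G$, the subgroup $N$ has at most two orbits on $V\Ga$; but $N\leq G^+$ preserves each of $\Del_1,\Del_2$, so $N$ cannot be transitive on $V\Ga$ and must have exactly the two orbits $\Del_1$ and $\Del_2$. On $\Del_2$ the factor $N_2$ acts trivially, so $N_1$ is transitive on $\Del_2$. Fixing any $\alpha\in\Del_1$, we have $N_1\leq G_\alpha$ since $N_1$ fixes $\Del_1$ pointwise, and hence $N_1$ leaves the nonempty set $\Ga(\alpha)\subseteq\Del_2$ invariant; transitivity of $N_1$ on $\Del_2$ then forces $\Ga(\alpha)=\Del_2$. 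As $\alpha$ was arbitrary, every vertex of $\Del_1$ is joined to every vertex of $\Del_2$, and since $|\Del_1|=|\Del_2|=2n$ we conclude $\Ga=\K_{2n,2n}$.

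The only genuinely delicate points are bookkeeping: verifying that $g$ swaps the two kernels, so that $N=N_1\times N_2$ is normal in $G$ rather than merely in $G^+$, and recognizing that it is biquasiprimitivity — not full quasiprimitivity — that is exactly the hypothesis forcing $N$ to be transitive on each bipartite half. Once the normal subgroup $N=N_1\times N_2$ is in hand, the identification $\Ga=\K_{2n,2n}$ is immediate, so I expect no computational obstacle.
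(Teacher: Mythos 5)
Your proof is correct and takes essentially the same approach as the paper's: both form the kernels $K_1,K_2$ of $G^+$ on $\Del_1$ and $\Del_2$, note that conjugation by $g$ swaps them so that $K_1\times K_2$ is a nontrivial normal subgroup of $G$, invoke biquasiprimitivity to conclude its orbits are exactly $\Del_1$ and $\Del_2$ (hence each $K_i$ is transitive on the opposite half), and deduce $\Ga=\K_{2n,2n}$. Your write-up simply makes explicit two points the paper leaves implicit, namely $K_1\cap K_2=1$ (so the product is direct) and the final neighborhood argument showing every vertex of $\Del_1$ is adjacent to all of $\Del_2$.
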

\proof Let $K_i$ be the kernel of $G^+$ acting on $\Del_i$ for $i=1$ and $2$.
It is easy to show that $K_1^g=K_2$ and $K_2=K_1^g$,
and so $K_1\times K_2$ is a normal subgroup of $G$.
Notice that $K_1\times K_2$ fixes $\Del_1$ and $\Del_2$,
by the biquasiprimitivity of $G$, $\Del_1$ and $\Del_2$ are the orbits of
$K_1\times K_2$ on $V\Ga$, and thereby
$K_i$ is transitive on $\Del_{3-i}$.
This implies $\Ga=\K_{2n,2n}$ is a complete bipartite graph.\qed

\begin{lemma}\label{Prim}
If $G^+$ acts faithfully and primitively on $\Del_1$ and $\Del_2$, then one of the following holds:
\begin{itemize}
\item[(i)] $\Ga=\K_{2n,2n}-2n\K_2$.
\item[(ii)] $\Ga=B(\mathrm{PG}(d-1,q))$ or $B'(\mathrm{PG}(d-1,q))$ and  $2n=(q^d-1)/(q-1)$.
\end{itemize}
\end{lemma}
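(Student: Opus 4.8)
The plan is to reconstruct $\Ga$ as a coset graph and then pin down the two primitive actions of $G^+$ using the dihedral subgroup. First I would fix an edge $\{\a,\b\}$ with $\a\in\Del_1$, $\b\in\Del_2$, and choose $g\in G\setminus G^+$ interchanging $\a$ and $\b$ (possible since $G$ is arc-transitive); then $g$ swaps $\Del_1$ and $\Del_2$, $g^2\in G_\a$, and by Lemma~\ref{Coset-gp}(b) we have $\Ga\cong\Cos(G,G_\a,G_\a g G_\a)$, of valency $|G_\a:G_\a\cap G_\a^g|$, with $\Ga(\a)=\b^{G_\a}$ a single $G_\a$-orbit on $\Del_2$. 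Since $H\cong\D_{2n}$ is bi-regular on $V\Ga$, I would next record how $H$ meets the (unique) bipartition: either $H\le G^+$, in which case $H$ preserves $\Del_1,\Del_2$ and, having order $2n=|\Del_i|$, is regular on each part; or $H\not\le G^+$, in which case $H$ interchanges the parts and $H^+:=H\cap G^+$ has index $2$, is semiregular of order $n$, and hence is bi-regular on each part (so $H^+\cong\ZZ_n$ or $\D_n$).

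The next step is to show $G^+$ is $2$-transitive on $\Del_1$ (equivalently on $\Del_2$). When $H\le G^+$ this is immediate from Theorem~\ref{pri-d-gps} applied to the primitive group $G^+$ with its regular dihedral subgroup $H$. When $H\not\le G^+$, I would apply the classification of primitive groups with a bi-regular cyclic subgroup (M\"uller~\cite{Muller13}) if $H^+\cong\ZZ_n$, and part~(1) of Theorem~\ref{Thm-1} (the primitive case, Lemma~\ref{PrimBiReg}) if $H^+\cong\D_n$; in every entry of these lists the socle is $\A_{2n}$, $\PSL_d(q)$ in its natural action, an affine $2$-transitive group, or a Mathieu group, all of which are $2$-transitive. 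Thus in all cases $G^+$ is $2$-transitive of degree $2n$ on each part.

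With $2$-transitivity in hand, I would split according to whether the two actions of $G^+$, on $\Del_1$ and on $\Del_2$, are equivalent. If they are equivalent, identify $\Del_1$ with $\Del_2$ so that $G_\a$ becomes a point stabilizer fixing the matched point $\a'\in\Del_2$; then $G_\a$ has exactly the two orbits $\{\a'\}$ and $\Del_2\setminus\{\a'\}$ on $\Del_2$, and connectedness (as $n\ge 2$) forces $\Ga(\a)=\Del_2\setminus\{\a'\}$, of valency $2n-1$. Hence $\a\mapsto\a'$ is a perfect matching of non-edges and $\Ga=\K_{2n,2n}-2n\K_2$, giving conclusion~(i). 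If the two actions are inequivalent, I would invoke the (CFSG-based) list of finite $2$-transitive groups admitting two inequivalent actions of equal degree: apart from finitely many small groups (such as $\S_6$, $\A_6$ and $\M_{12}$, where $G_\a$ is either transitive on $\Del_2$, forcing $\Ga=\K_{2n,2n}$, or has a fixed point, recovering the crown graph of~(i)), the only possibility is $\PSL_d(q)\trianglelefteq G^+\le\PGammaL_d(q)$ acting on the points and hyperplanes of $\mathrm{PG}(d-1,q)$ with $d\ge3$ and $2n=(q^d-1)/(q-1)$. There the point stabilizer has precisely two orbits on hyperplanes, of sizes $(q^{d-1}-1)/(q-1)$ and $q^{d-1}$ (incident and non-incident hyperplanes), so $\Ga(\a)$ is one of these and $\Ga=B(\mathrm{PG}(d-1,q))$ or $B'(\mathrm{PG}(d-1,q))$; by Lemma~\ref{lm:BPGd-1q} these indeed carry the required bi-regular $\D_{2n}$, giving conclusion~(ii).

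The main obstacle I anticipate is the inequivalent-action analysis: extracting from the $2$-transitive classification exactly the configurations that survive the self-pairing requirement (the orbital defining $\Ga(\a)$ must be self-paired, since $g$ swaps $\a$ and $\b$) and that are compatible with the bi-regular $\D_{2n}$, and then verifying that every small exceptional group collapses to $\K_{2n,2n}$ or to the crown graph rather than producing a spurious new graph. A secondary technical point is to confirm that the bijection $\a\mapsto\a'$ in the equivalent case is a genuine $G$-invariant perfect matching, and to match the two point-stabilizer orbits in the $\PSL_d(q)$ case with incidence and non-incidence so as to separate $B$ from $B'$.
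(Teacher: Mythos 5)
Your skeleton matches the paper's proof: the same split into $H\le G^+$ (regular dihedral subgroup, handled by Theorem~\ref{pri-d-gps}) versus $H\not\le G^+$ (bi-regular $\ZZ_n$ or $\D_n$, handled by M\"uller's theorem or part (1) of Theorem~\ref{Thm-1}), followed by the equivalent/inequivalent analysis of the two actions via Cameron's notes. The gap is in your pivotal claim that ``in every entry of these lists the socle is $\A_{2n}$, $\PSL_d(q)$ in its natural action, an affine $2$-transitive group, or a Mathieu group, all of which are $2$-transitive.'' This mischaracterizes the lists, and the falsity is visible inside the paper itself: part (1)(i) of Theorem~\ref{Thm-1} contains $(\ZZ_2^4{:}\mathrm{SO}_4^{+}(2),\D_8)$ and $(\ZZ_2^4{:}\mathrm{SO}_4^{-}(2),\D_8)$, and these affine groups have rank $3$, not $2$ (as stated explicitly in Example~\ref{ex:affinepolargraph}); likewise M\"uller's list of primitive groups with a bi-regular cyclic subgroup contains $\A_5$ and $\S_5$ of degree $10$ (acting on $2$-subsets, rank $3$) --- this is exactly the entry $(\A_5,5)$ quoted in case (b.3) of the proof of Lemma~\ref{QP-bireg}, and its socle is $\A_5$, not $\A_{10}$. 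Both kinds of groups do contain the required bi-regular subgroup, so they pass your filter, and since they are not $2$-transitive your equivalent/inequivalent dichotomy never touches them. This is precisely where the bulk of the paper's proof lives: for the affine candidates $(\AGL_3(2),\GL_3(2))$, $(\ZZ_2^4{:}\mathrm{SO}_4^{\pm}(2),\mathrm{SO}_4^{\pm}(2))$ and the almost simple candidate $G^+=\A_5$ of degree $10$, the paper shows, using the biquasiprimitivity of $G=G^+.2$ (in the almost simple case $\C_G(\soc(G^+))=1$ forces $G$ almost simple and $2$ to divide $|\Out(\soc(G^+))|$) together with several {\sc Magma}~\cite{Magma} computations on the resulting $G^{+}$-semisymmetric graphs, that no admissible biquasiprimitive overgroup $G$ exists. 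Your proposal contains no mechanism for eliminating these cases, so it is incomplete at exactly the point where the real work is.

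A second, smaller problem: in your inequivalent-actions analysis you allow the outcome $\Ga=\K_{2n,2n}$ (``$G_\a$ is either transitive on $\Del_2$, forcing $\Ga=\K_{2n,2n}$\,''). But $\K_{2n,2n}$ is not among the conclusions (i)--(ii) of the lemma being proved; in the paper it arises only under the unfaithfulness hypothesis of Lemma~\ref{unfaithful}. So for the exceptional $2$-transitive groups with two inequivalent representations of equal degree ($\S_6$, $\A_6$, $\M_{12}$), it is not enough to remark that they ``collapse to $\K_{2n,2n}$''; one must show such configurations are incompatible with the standing hypotheses (a bi-regular $\D_{2n}$ inside a biquasiprimitive $G$ with $G^+$ faithful). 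For $\A_6$ and $\S_6$ this can be done --- the exceptional outer automorphism swaps the fixed-point-free order-$3$ class with the class having three fixed points, so no cyclic subgroup is bi-regular in both actions simultaneously --- but the $\M_{12}$ case is genuinely delicate: $(\M_{12},6)$ does occur in M\"uller's list, and the relevant classes of $\M_{12}$ are stable under its outer automorphism, so this case cannot be dismissed without a concrete argument. Your proposal defers this verification but never supplies it.
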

\proof Assume first $H\le G^+$.
Then $H$ is regular on $\Del_1$ and $\Del_2$, hence $G^+$ is primitive containing a regular dihedral subgroup $H$ on $\Del_1$ and $\Del_2$.
By Theorem~\ref{pri-d-gps}, $G^+$ is
$2$-transitive on $\Del_1$ and $\Del_2$,
and $(G^+,H,G_\a)$ is listed in Theorem~\ref{pri-d-gps}. 
If $(G^+)^{\Del_1}$  and $(G^+)^{\Del_2}$ are not permutation equivalent, then
$G^+$ has two different faithful $2$-transitive permutation representations,
and it follows from \cite[Notes 4]{Cameron} that either
$(G^+,G_{\a})=(\PGL(2,11),\A_5)$ or $\soc(G^+)=\PSL_d(q)$ with $d\ge 3$.
The former case is not possible as $|\Del_1|=|G^+:G_\a|=11\ne 2n=|H|$,
and the latter case is a contradiction as there is no candidate of groups in Theorem~\ref{pri-d-gps}
with socle $\PSL_d(q)$ and $d\ge 3$.
Thus $(G^+)^{\Del_1}$  and $(G^+)^{\Del_2}$ are permutation equivalent.
Now $G_{\a}$ fixes a vertex $\g\in\Del_2$ and is transitive on $\Del_2\setminus\{\g\}$.
It follows $\Ga=\K_{2n,2n}-2n\K_2$, as in part (i) of Lemma~\ref{Prim}.

Thus assume that $H$ is not contained in $G^+$ in the following. Then $G=HG^+$ and $H^+=H\cap G^+<H$.
Since $H/H^+=H/(H\cap G^+)\cong HG^+/G^+=G/G^+\cong\ZZ_2$,
$H^+$ is of index 2 in $H$,
hence either $H^+=\l a\r\cong\ZZ_n$ or $H^+=\l a^2\r:\l b\r\cong\D_n$ with $n$ even.
Notice that $H^+$ is biregular on $\Del_1$ and $\Del_2$, namely $G^+$ is a primitive group with a biregular cyclic subgroup $\l a\r$,
or a biregular dihedral subgroup $H^+=\l a^2\r:\l b\r\cong\D_n$.
Hence $(G^+,H^+)$ satisfies \cite[Theorem 3.3]{Muller13} or part (1) of Theorem~\ref{Thm-1}.
In particular, $G^+$ is affine or almost simple.

Suppose that $G^+$ is $2$-transitive on $\Del_i$.
If  $(G^+)^{\Del_1}$  and $(G^+)^{\Del_2}$ are permutation equivalent,
then $\Ga=\K_{n,n}-n\K_2$, as in part (i) of Lemma~\ref{Prim}.
Assume that $(G^+)^{\Del_1}$  and $(G^+)^{\Del_2}$  are not permutation equivalent.
Note that $(G^+,G_\a)\ne (\PSL_2(11),\A_5)$ by \cite[Theorem 3.3]{Muller13} or part (1) of Theorem~\ref{Thm-1}.
Therefore, by~\cite[Notes 4]{Cameron},  $\PGL_d(q)\le G\le\PGammaL_d(q)$ with $q\ge 3$ and $|\Del_i|=(q^d-1)/(q-1)$ is even,
this gives rise to $\Ga=B(\mathrm{PG}(d-1,q))$ or $B'(\mathrm{PG}(d-1,q))$, as in part (ii) of Lemma~\ref{Prim}.

Suppose now that $G^+$ is not 2-transitive on $\Del_i$.
Assume first that $G^+$ is affine.
By \cite[Theorem 3.3]{Muller13} and part (1) of Theorem~\ref{Thm-1},
the possibilities for $ (G^{+},(G^{+})_\alpha)$  are as follows.

\begin{itemize}
\item[(i)] $H^+\cong\ZZ_8$,  and $(G^{+},(G^{+})_\alpha)=(\AGL_3(2),\GL_3(2))$, as in part (1)(d) of \cite[Theorem 3.3]{Muller13};
\item[(ii)] $H^+=\D_8$, and    $(G^{+},(G^{+})_\alpha)=(\ZZ_{2}^4 : \mathrm{SO}^{+}_4(2),\mathrm{SO}^{+}_4(2))$, or $(\ZZ_{2}^4 : \mathrm{SO}^{-}_4(2),\mathrm{SO}^{-}_4(2))$.
\end{itemize}

Notice that $\Ga$ is  connected  $G$-symmetric and $G^{+}$-semisymmetric, and $ (G^{+})_\alpha=G_{\alpha}$, and $G=G^{+}.2$ lies in $\N_{\Aut\Ga}(G^{+})$.
For (i), computation in {\sc Magma} shows that there are two connected symmetric and $G^{+}$-semisymmetric graphs, whose valencies are $7$ and $8$.
If the valency is $7$, then $\Aut\Ga=\mathrm{S}_2 \times \mathrm{S}_8$, and any subgroup $L$ of $\N_{\Aut\Ga}(G^{+})$ such that $|L|/|G^{+}|=2$ and $L$ is transitive on $V\Ga$  is not biquasiprimitive on $V\Ga$.
If the valency is $8$, then we also have that any subgroup $L$ of $N_{\Aut\Ga}(G^{+})$ such that $|L|/|G^{+}|=2$ and $L$ is transitive on $V\Ga$  is not biquasiprimitive on $V\Ga$.
For (ii), if $(G^{+},(G^{+})_\alpha)=(\ZZ_{2}^4 : \mathrm{SO}^{+}_4(2),\mathrm{SO}^{+}_4(2))$, then computation in {\sc Magma} shows that there are two connected symmetric  and $G^{+}$-semisymmetric graphs, whose valencies are $6$ and $9$, while for both graphs, any subgroup $L$ of $\N_{\Aut\Ga}(G^{+})$ such that $|L|/|G^{+}|=2$ and $L$ is transitive on $V\Ga$  is not biquasiprimitive on $V\Ga$.
If   $(G^{+},(G^{+})_\alpha)=(\ZZ_{2}^4 : \mathrm{SO}^{-}_4(2),\mathrm{SO}^{-}_4(2))$, then computation in {\sc Magma} shows there are two connected symmetric  and $G^{+}$-semisymmetric graphs, whose valencies are $5$ and $10$, while for both graphs, any subgroup $L$ of $\N_{\Aut\Ga}(G^{+})$ such that $|L|/|G^{+}|=2$ and $L$ is transitive on $V\Ga$  is not biquasiprimitive on $V\Ga$.


Now assume that $G^+$ is almost simple and let $T=\soc(G^+)$.
Since $G=G^+.2$ is biquasiprimitive on $V\Ga$,
we have $\C_G(T)=1$, for otherwise $\C_G(T)\cong\ZZ_2$ has more then two orbits on $V\Ga$, a contradiction.
It follows that $G$ is also almost simple with socle $T$.
Consequently, we may set $G=T.o$ and $G^+=T.o_1$,
with $o_1<o\le\Out(T)$ and $|o:o_1|=2$; in particular $2$ divides $ |\Out(T)|$.
Checking the candidates in \cite[Theorem 3.3]{Muller13} and part (1) of Theorem~\ref{Thm-1}, the only possibility is:
$H^+=\ZZ_5$, $G^+=\A_5$, and $G=\S_5$ acts on the set of 2-subsets of $\{1,2,3,4,5\}$.
Computation in {\sc Magma} shows that there are two connected symmetric and $G^{+}$-semisymmetric graphs, whose valencies are $3$ and $6$.
Furthermore, for both of graphs, $\Aut\Ga=\mathrm{S}_2\times\mathrm{S}_5 $, and the subgroup $\mathrm{S}_5$ of  $ \Aut\Ga$  contains no bi-regular $\D_{10}$ on $V\Ga$, a contradiction.\qed

\begin{lemma}\label{Imprim}
If $G^+$ acts faithfully on $\Del_1$ and $\Del_2$,
and imprimitively on $\Del_1$ or $\Del_2$,
then
 \begin{itemize}
 \item[\rm (i)] $\PGL_d(q)\leq G^{+} \leq \PGammaL_d(q)$ and $G=G^{+} : \langle b\rangle \leq \Aut(\PSL_d(q))$, where $d\geq 3$ and $q$ is odd, and $b$ acts as an inverse-transpose automorphism of $\PSL_d(q)$;
 \item[\rm (ii)] $n=(q^{d}-1)/(q-1)$ and $G_\alpha$ is  the unique subgroup  with index $2$ of the stabilizer of a  $1$-dimensional subspace of $\mathbb{F}_q^d$ in $G^{+}$;
 \item[\rm (iii)]  $ \langle a\rangle$ is a Single subgroup of $\PGL_d(q)$;
  \item[\rm (iv)] $\Ga \cong \mathcal{G}_{(d,q)}^{(1)}$  or $ \mathcal{G}_{(d,q)}^{(2)}$ as Example~\ref{ex:d>3q}.
 \end{itemize}
\end{lemma}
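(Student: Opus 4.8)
The plan is to transfer the problem to the faithful imprimitive action of $G^+$ on one part, say $\Del_1$, and to run an analysis parallel to the imprimitive case of Theorem~\ref{Thm-1}. First I would establish that $G^+$ is quasiprimitive on $\Del_1$, using the biquasiprimitivity of $G$: given $1\neq M\trianglelefteq G^+$, the subgroup $\langle M,M^b\rangle$ is normal in $G=\langle G^+,b\rangle$ and so has at most two orbits; distinguishing $M^b=M$ from $M^b\neq M$ (and using that $M\cap M^b\trianglelefteq G$) forces $M$ to be transitive on $\Del_1$, the only alternative being a normal product $M\times M^b$, which I would exclude using faithfulness together with the restrictive cycle structure of $a$ found below. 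Granting quasiprimitivity, I would pin down $H^+:=H\cap G^+$. Since $\{\Del_1,\Del_2\}$ is $G$-invariant, $H$ permutes the parts; if $H\le G^+$ then $H$ is regular on $\Del_1$, but a quasiprimitive group with a regular dihedral subgroup is $2$-transitive, hence primitive (Theorem~\ref{pri-d-gps}), against imprimitivity. Thus $H\not\le G^+$ and $H^+$ has index $2$ in $H$, i.e.\ $H^+\cong\ZZ_n$ or $H^+\cong\D_n$ with $n$ even; the dihedral alternative puts $(G^+,H^+)$ into part~(2) of Theorem~\ref{Thm-1}, a short list of small groups disposed of by computation, so I would proceed with the cyclic case.

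Next I would exploit $\langle a\rangle$. As $a$ is semiregular of order $n$ on $V\Ga$ with orbits meeting each part, $a$ is bi-regular on $\Del_1$ with cycle type $(n,n)$. Passing to a maximal $G^+$-block system $\BB$, the quotient $L:=(G^+)^{\BB}$ is primitive, and Lemma~\ref{Block} splits the analysis according to whether blocks lie inside a single $\langle a\rangle$-orbit or meet both; the semiregularity of $\langle a\rangle$ on each orbit forces, in the case that survives, blocks of size $2$ meeting both orbits, so that $\overline a$ is a single $n$-cycle and $L$ is primitive with a regular cyclic subgroup. This mirrors Case~2(b) of Lemma~\ref{QP-bireg}. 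By the classification of primitive groups with a regular cyclic subgroup (\cite{Jones02}, \cite[Corollary~1.2]{Li-Proc03}), together with the eliminations of the alternating, affine and sporadic candidates already used in Lemmas~\ref{PrimBiReg} and \ref{QP-bireg} (via biquasiprimitivity of $G$ and semiregularity of $H$), I expect $\soc(L)=\PSL_d(q)$ in its projective action of degree $n=(q^d-1)/(q-1)$ with $\overline a$ a Singer cycle; the degree-$2$ case is absorbed into the $\PSL_2$ family of Lemma~\ref{Ver-QP}. This yields (ii) and (iii) and lifts to $\PGL_d(q)\le G^+\le\PGammaL_d(q)$.

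The technical heart is to locate $b$ and to force $d$ odd. Writing $\langle x\rangle=\GL_1(q^d)$ for the Singer subgroup and identifying $a$ with $xZ$, the relation $a^b=a^{-1}$ gives $x\,x^g\in Z$ for a preimage $g$ of $b$. By Huppert's description $\N_{\GammaL_d(q)}(\langle x\rangle)=\GammaL_1(p^{de})$ and the case $m=1$ of Lemma~\ref{lm:mdq}(a), no such $g$ lies in $\GammaL_d(q)$ when $d\ge 3$: a Singer cycle cannot be inverted by field automorphisms alone. Hence $b$ must involve the inverse-transpose automorphism $\iota$, which does invert the Singer cycle (by the symmetric-matrix fact underlying Lemma~\ref{lm:BPGd-1q}) and which interchanges points and hyperplanes, so swaps $\Del_1$ and $\Del_2$ consistently with $b\notin G^+$. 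This gives $G=G^+{:}\langle b\rangle\le\Aut(\PSL_d(q))$ with $b$ an inverse-transpose automorphism, proving (i). Finally, the imprimitivity of $G^+$ on $\Del_1$ means $\soc(G^+)=\PSL_d(q)$ is transitive on the $2n$ ``oriented points'' (the blocks having size $2$); realising the required non-square scalar in the line-stabiliser of $\SL_d(q)$ modulo its centre is possible exactly when $Z(\SL_d(q))\le P$, i.e.\ when $d$ is odd, which is the source of the parity condition.

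It then remains to identify $\Ga$. Conclusion (ii) identifies $G_\alpha$ with the index-$2$ subgroup of a $1$-space stabiliser, matching $K=(G_{v_1^P})^{\iota}$ of Example~\ref{ex:d>3q}; since all Singer subgroups are conjugate (the Proposition of Section~2), the pair $(G,G_\alpha)$ is determined up to conjugacy and automorphism, so by Lemma~\ref{Coset-gp} the connected $G$-arc-transitive graph $\Ga\cong\Cos(G,G_\alpha,G_\alpha gG_\alpha)$ is one of the connected self-paired orbital graphs of Lemma~\ref{lm:cGdq}, namely $\mathcal{G}_{(d,q)}^{(1)}$ or $\mathcal{G}_{(d,q)}^{(2)}\cong\mathcal{G}_{(d,q)}^{(3)}$, which is (iv). I expect the main obstacle to be the middle step: simultaneously eliminating the non-$\PSL_d$ primitive quotients and carrying out the Singer-normalizer computation that both forces $b$ to be the graph automorphism and pins down the parity of $d$.
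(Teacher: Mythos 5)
Your later steps (bi\mbox{-}regular cyclic subgroup, maximal blocks of size $2$, Jones' classification of primitive groups with a regular cyclic subgroup, Lemma~\ref{lm:mdq}(a) with $m=1$ forcing the reflection outside $\PGammaL_d(q)$, the parity of $d$, and the coset-graph identification) track the paper's proof closely. The genuine gap is your opening move: the claim that $G^+$ is quasiprimitive on $\Del_1$. Biquasiprimitivity of $G$ together with faithfulness of $G^+$ does \emph{not} imply this. In your own case division, the branch $M^b\neq M$, $M\cap M^b=1$ leaves open exactly the configuration where $M\times M^b$ is transitive on each part while $M$ itself is intransitive, and this configuration genuinely occurs for biquasiprimitive groups: take $G^+=T\times T$ acting coordinatewise on $\Del_1=\Del_2=\Omega_0^2$ and let $b$ swap the two coordinates and the two parts; then $G=(T\times T){:}\langle b\rangle$ is biquasiprimitive with $G^+$ faithful and imprimitive on each part, yet $T\times 1\lhd G^+$ is intransitive on $\Del_1$. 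So quasiprimitivity is precisely what must be earned from the presence of the bi-regular dihedral subgroup, and your proposal never earns it: the promised exclusion ``using faithfulness together with the restrictive cycle structure of $a$ found below'' is circular, because everything below --- ruling out $H\le G^+$ via Theorem~\ref{pri-d-gps}, invoking part~(2) of Theorem~\ref{Thm-1} for dihedral $H^+$, getting faithfulness of $G^+$ on the block system for free, and applying Jones' theorem to $G^+$ itself rather than to a quotient --- is carried out under the quasiprimitivity assumption.

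The paper never asserts (and never needs) quasiprimitivity of $G^+$. It forms the $G$-block system $\CC=\BB\cup\BB^g$ on $V\Ga$, gets faithfulness of $G$ on $\CC$ from biquasiprimitivity, uses Lemma~\ref{Subgroup} to make $H$ faithful on $\BB$ and to pin the blocks to size $2$, and only then proves the single normality statement it requires: $G^+$ is faithful on $\BB$. That argument is special to the block kernel: $G^+_{(\BB)}$ embeds in $\S_2^n$, hence is an elementary abelian $2$-group, so if it were nontrivial then $G^+_{(\BB)}\times(G^+_{(\BB)})^g\lhd G$ would be transitive on the parts, forcing the primitive quotient $(G^+)^{\BB}$ to be affine of odd prime degree admitting a transitive $2$-subgroup, which is absurd. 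This solubility trick has no analogue for an arbitrary intransitive normal subgroup $M\lhd G^+$, so it cannot be upgraded to your quasiprimitivity claim; your plan needs a genuinely new argument there, and without it the entire reduction collapses. A secondary soft spot: part~(2)(ii) of Theorem~\ref{Thm-1} is the infinite family $\PSL_2(p^e).f$ with $p^e\equiv 5\pmod 8$, so your dihedral branch is not ``a short list of small groups disposed of by computation''; it too needs an actual elimination argument.
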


\proof Suppose that $G^+$ is imprimitive on $\Del_i$ with $i=1$ or $2$.
Let $\BB=\{B_1,B_2,\cdots,B_m\}$ be a maximal $G^+$-invariant partition on $\Del_i$.
Then $\BB^g=\{B_1^g,B_2^g,\cdots,$
$B_m^g\}$ is a maximal $G^+$-block system on $\Del_{3-i}$.
Set $\CC=\BB\cup\BB^g$.
As $g^2\in G^+$ fixes $\BB$  setwise,
$\CC$ is a $G$-block system on $V\Ga$.
Since $G$ is biquasiprimitive on $V\Ga$ and $|\CC|=2|\BB|\ge 4$,
we derive that $G$ acts faithfully on $\CC$.
Denote by $H_{(\BB)}$ and $H_{(\BB^g)}$ the kernels of
$H$ acting on $\BB$ and $\BB^g$ respectively.
For $x\in H$, notice that
$x\in H_{(\BB)}$ if and only if $B_i^x=B_i$ for $1\le i\le m$,
or if and only if $(B_i^g)^{g^{-1}xg}=B_i^g$,
or equivalent $g^{-1}xg\in H_{(\BB^g)}$,
we conclude that $H_{(\BB^g)}=H_{(\BB)}^g$.
In particular, $|H_{(\BB)}|=|H_{(\BB^g)}|$.
If $|H_{(\BB)}|=2$, as $H_{(\BB)}\lhd H$, we have
$2\mid n$, notice that the dihedral group $\D_{2n}$ with $n$ even has unique normal subgroup of order 2,
we further obtain $H_{(\BB)}=H_{(\BB^g)}$.
Thereby $H$ acts unfaithfully on $\CC$, which is a contradiction as $G$ acts faithfully on $\CC$.
If $|H_{(\BB)}|>2$, by Lemma~\ref{Subgroup},
$H_{(\BB)}\cap H_{(\BB^g)}$ contains a nontrivial normal subgroup of $H$.
This means that $H$ acts unfaithfully on $\CC$,
which is a contradiction as $G$ acts faithfully on $\CC$.

Therefore $|H_{(\BB)}|=|H_{(\BB^g)}|=1$,
namely $H$ acts faithfully on $\BB$ and $\BB^g$,
or equivalently $H_B$ is a core-free subgroup of $H$ for each $B\in\CC$,
implying $H_B=1$ or $\l a^jb\r$ for some $j$.
Since $|\BB|<|\Del_1|=|H|$, $H$ is not regular on $\BB$,
so $H_B=\l a^jb\r\cong\ZZ_2$.
It then follows from  $\l a\r H_B=H$ that $\l a\r$ is transitive and so regular on $\BB$ and $\BB^g$.
In particular $|\BB|=n$, $|B|=2$ and $|G_B:G_\a|=2$.
Moreover, the maximality of $\BB$ and $\BB^g$ implies that
$(G^+)^{\BB}$ and $(G^+)^{\BB^g}$ are primitive,
hence both $(G^+)^{\BB}$ and $(G^+)^{\BB^g}$ satisfy \cite{Jones02}.

We claim that $G^+$ acts faithfully on $\BB$.
Suppose for a contradiction that $G^+$ acts unfaithfully on $\BB$. Let $G^+_{(\BB)}$ be the kernel of $G^+$ on $\BB$.
Then it is easy to show that $(G^+_{(\BB)})^g$ is the kernel of $G^+$ acting on $\BB^g$,
and $G^+_{(\BB)}\times (G^+_{(\BB)})^g\le G^+$ is normal in $G$.
By the biquasiprimitivity of $G$, $G^+_{(\BB)}\times (G^+_{(\BB)})^g\le G^+$ is transitive on $\Del_1$ and $\Del_2$,
and so transitive on $\BB$ and $\BB^g$,
it follows that $G^+_{(\BB)}$ is transitive on $\BB^g$,
and $(G^+_{(\BB)})^{\BB^g}$ is transitive on $\BB$.
Recall $|\BB|=n$, $|B|=2$, and $G^+$ acts faithfully on $\Del_i$,
we derive that $$G^+_{(\BB)}\lesssim (G^+)^{B_1}\times (G^+)^{B_2}\times\cdots\times (G^+)^{B_n}\lesssim\S_2^n$$
is soluble.
Recall that $(G^+)^{\BB^g}$ is affine or almost simple.
Since $1\ne G^+_{(\BB)}\cong (G^+_{(\BB)})^{\BB^g}$ is soluble  and normal in $(G^+)^{\BB^g}$,
$(G^+)^{\BB^g}$ is not almost simple, hence $(G^+)^{\BB}$ is affine. Further, by \cite{Jones02},
$\ZZ_p\lhd (G^+)^{\BB}\le\AGammaL(1,p)$, where $p=|\BB|=n$ is a prime.
However, this is  a contradiction, since
$(G^+_{(\BB)})^g$ is a $2$-group and is transitive on $\BB$.

We have shown that $G^+$ acts faithfully on $\BB$.
Then $(G^+)^{\BB}\cong G^+$,
and  as $G_B$ has a subgroup with index $2$, it follows from~\cite{Jones02} that
one of the following holds:
\begin{itemize}
\item[(i)] $(G^+,G_B)=(\M_{11},\A_6.2)$ or $(\S_n,\S_{n-1})$ with $n\ge 4$;
\item[(ii)] $\PGL_d(q)\le G^+\le\PGammaL_d(q)$ and $n= (q^d-1)/(q-1)$, and $d\geq 3$ or $d=2$
and $q\neq 8$, and $\langle a\rangle$ is a Singer subgroup.
\end{itemize}

If $G^+=\M_{11}$, as $\Out(\M_{11})=1$, we have $G=G^+.\ZZ_2\cong\M_{11}\times\ZZ_2$,
so $G$ has a normal subgroup isomorphic to $\ZZ_2$ having more than 4 orbits on $V\Ga$,
contradicting the bi-quasiprimitivity of $G$.
If $(G^+,G_B)=(\S_n,\S_{n-1})$, then $G_\a=\A_{n-1}<\soc(G^+)\cong\A_n$, it follows $\soc(G^+)$  has four orbits on $V\Ga$,
also contradicting the bi-quasiprimitivity of $G$.

Consider case (ii).
Since  $T:=\soc(G^{+})=\PSL_d(q)$ is characteristic in $G^{+}$ and $G^{+}$ is normal in $G$, we see that $T$ is normal in $G$, and so is  $ \C_G(T)$.
Notice that $\C_G(T)=1 $ or $\ZZ_2$.
Since $G$ acts biquasiprimitively on $V\Ga$, we see that $\C_G(T)=1$, and so $G$ is almost simple with socle $T$, and particularly, $G\leq \Aut(\PSL_d(q))$.
Let $L=\PGL_d(q)$ be the normal subgroup of $G$.
Since $G$ acts biquasiprimitively on $V\Ga$, we see that $L$ is transitive on $\Del_1$, which implies that $|G_\alpha|/|L_\alpha|=|G^{+}|/|L|$.
Recall that $G^{+}$ acts faithfully and primitively on $\mathcal{B}$.
Hence $L$ is transitive on $\mathcal{B}$, which implies that $|G_B|/|L_B|=|G^{+}|/|L|$.
It follows that $|G_\alpha|/|L_\alpha|=|G_B|/|L_B|=|G^{+}|/|L|$.
This together with $|G_B|/|G_\alpha|=2$ implies that $|L_B|/|L_\alpha|=2$.
Notice $L_{B}=[q^{d-1}] :  \GL_{d-1}(q)\cong \AGL_{d-1}(q)$.
If $q$ is even, then $L_{B}$ has no subgroup with index $2$, a contradiction.
Therefore, $q$ is odd.

Suppose that $H<\PGammaL_d(q)$.
Since $\langle a\rangle$ is a Singer subgroup, by Lemma~\ref{lm:mdq}(a) (corresponding to  $m=1$) we see that the case $d\geq 3$ is impossible, and so $d=2$.
Now  $L_B=\ZZ_p^e : \ZZ_{q-1}\cong \AGL_1(q)$.
From $|L_B|/|L_\alpha|=2$ we see that $L_\alpha=\ZZ_p^e : \ZZ_{(q-1)/2}$, which is a subgroup of $T=\PSL_2(q)$.
It follows that $T_\alpha=L_\alpha$, and hence $T$ has four orbits on $V\Ga$, contradicting that $G$ is bi-quasiprimitive on $V\Ga$.

Therefore, $H$ is not contained in $ \PGammaL_d(q)$.
Notice that   $\Aut(\PSL_2(q))=\PGammaL_2(q)$ and $\Aut(\PSL_d(q))=\PGammaL_d(q) : \langle \iota\rangle$  for $d\geq 3$, where $\iota$ is an inverse-transpose automorphism.
Since $\langle a \rangle \leq H\cap G^{+}$ and $G^{+}\leq \PGammaL_d(q)$, we conclude that $d\geq 3$ and $G=G^{+} : \langle b\rangle$.

Next, we show that $d$ is odd, and  $\Ga \cong \mathcal{G}_{(d,q)}^{(1)}$ or $\mathcal{G}_{(d,q)}^{(2)}$.
Recall that $G^{+}$ acts faithfully and primitively on both $\mathcal{B}$ and $\mathcal{B}^g$, and $|\mathcal{B}|=n= (q^d-1)/(q-1)$.
Since $G$ contains an inverse-transpose automorphism, it follows that $\Gamma_{\mathcal{C}}=B(\mathrm{PG}(d-1,q))$ or $B'(\mathrm{PG}(d-1,q))$.
Particularly,  we may view $\mathcal{C}=\mathcal{B} \cup \mathcal{B}^g$ as the union of $1$- and $(d-1)$-subspaces  of a $d$-dimensional vector space $V $ over $\mathbb{F}_q$.
Fix a basis $(v_1,v_2,\ldots,v_d)$ of $V$.
By the transitivity of $G$ on $\mathcal{C}$, we may view $B$ as the $1$-subspace $\langle v_1 \rangle$.
Since $L_B \cong \AGL_{d-1}(q)$ and $|L_B|/|L_\alpha|=2$, we conclude that $L_B'=\mathrm{ASL}_{d-1}(q)$ is contained in $ L_\alpha $.
Then, from $L_B/L_B'\cong \ZZ_{q-1}$, we see that $L_\alpha$ is the unique subgroup with index $2$ in $L_B$.
We identify $L$ with $\GL_d(q)/Z$, where $Z$ is the center of $\GL_d(q)$.
Notice that $Z< \GL_d(q)_{\langle v_1 \rangle}$.
Therefore,  the full preimage of $L_\alpha$ in $\GL_d(q)$, namely $ \hat{\,}L_\alpha$, is the subgroup with index $2$ in $\GL_d(q)_{\langle v_1 \rangle}$ that contains $Z$.

Now we identify this group $ \hat{\,}L_\alpha$ in $\GL_d(q)_{\langle v_1 \rangle}$.
Let $\mu$ be a generator of $\mathbb{F}_q^{*}$.
Then $ \GL_d(q)_{\langle v_1 \rangle}=N:(M \times R_2)= N:(M_1:(R_3\times R_2)) $, where
\begin{align*}
N=&\left\{
 \begin{pmatrix}
1 & \textbf{0} \\
\textbf{a}^{\mathsf{T}} & I_{n-1}\\
\end{pmatrix}  : I_{n-1}=\mathrm{diag}(1,\ldots,1) \in \GL_{d-1}(q),\ \textbf{a}  \in \mathbb{F}_{q}^{d-1} \right\} \cong  [q^{d-1}],\\
M=&\left\{
 \begin{pmatrix}
1 & \textbf{0} \\
\textbf{0}^{\mathsf{T}} & \textbf{b}\\
\end{pmatrix}  : \textbf{b} \in \GL_{d-1}(q) \right\} \cong \GL_{d-1}(q),\\
M_1=&\left\{
 \begin{pmatrix}
1 & \textbf{0} \\
\textbf{0}^{\mathsf{T}} & \textbf{b}\\
\end{pmatrix}  : \textbf{b} \in \SL_{d-1}(q) \right\} \cong \SL_{d-1}(q),\\
R_2=&\left\{
 \begin{pmatrix}
\lambda & \textbf{0} \\
\textbf{0}^{\mathsf{T}} & I_{n-1} \\
\end{pmatrix}    :  I_{n-1}=\mathrm{diag}(1,\ldots,1) \in \GL_{d-1}(q), \lambda \in \langle \mu \rangle \right\} \cong \ZZ_{q-1},\\
R_3=&\left\{
 \begin{pmatrix}
1 & \textbf{0} \\
\textbf{0}^{\mathsf{T}} & \textbf{c}   \\
\end{pmatrix}   :  \textbf{c}=\mathrm{diag}(\lambda,\ldots,1) \in \GL_{d-1}(q), \lambda \in \langle \mu \rangle \right\} \cong \ZZ_{q-1}.
\end{align*}
From $\GL_d(q)_{\langle v_1 \rangle}'=NM_1\cong \mathrm{ASL}_{d-1}(q)$ we conclude $NM_1 < \hat{\,}L_\alpha$.
Let $x=\mathrm{diag}\{\mu,1,\ldots,1 \}\in R_2$.
Suppose that $x \notin \hat{\,}L_\alpha $.
Then from $\GL_d(q)_{\langle v_1 \rangle}=N:(M \times R_2)$ we conclude that $\hat{\,}L_\alpha=N:(M \times \langle x^2\rangle)$.
From $\mathrm{diag}\{1,\mu^{-1}, \ldots, \mu^{-1} \} \in M \leq   \hat{\,}L_\alpha$ and $\mathrm{diag}\{\mu,\mu, \ldots, \mu \} \in Z \leq   \hat{\,}L_\alpha$, we see $x \in \hat{\,}L_\alpha$, a contradiction.
Therefore, $x \in  \hat{\,}L_\alpha$.
Let $y=\mathrm{diag}\{1,\mu,1,\ldots,1 \}\in R_3 $.
Then $R_2=\langle x\rangle$ and $ R_3=\langle y\rangle $.
Since $\GL_d(q)_{\langle v_1 \rangle}/NM_1\cong R_2\times R_3\cong \ZZ_{q-1}^2$ and $\GL_d(q)_{\langle v_1 \rangle}/\hat{\,}L_\alpha \cong \ZZ_2$, it follows that $y \notin \hat{\,}L_\alpha$.
Therefore, $\hat{\,}L_\alpha=N:(M_1: \langle y^2\rangle \times R_2)$, where the group $M_1: \langle y^2\rangle <M \cong \GL_{d-1}(q)$.

Consider the homomorphism $ \sigma$ from $M_1: \langle y^2 \rangle$ to $\mathbb{F}_q^{*}$ given by the determinant.
Since the kernel of $\sigma$ is $M_1\cong \SL_{d-1}(q)$, we see that $\sigma(M_1: \langle y^2 \rangle)\cong \langle y^2 \rangle \cong \langle \mu^2\rangle$.
From $x=\mathrm{diag}\{\mu,1,\ldots,1 \} \in \hat{\,}L_\alpha$ and $\mathrm{diag}\{\mu^{-1},\mu^{-1},\ldots,\mu^{-1} \} \in Z \leq  \hat{\,}L_\alpha$, we have $z:=\mathrm{diag}\{1,\mu^{-1},\ldots,\mu^{-1} \} \in Z$ and hence $z \in M_1: \langle y^2 \rangle$.
Then $\sigma(z)=\mu^{-(d-1)} \in \langle \mu^2\rangle$, which implies $\mu^{d-1} \in \langle \mu^2\rangle$.
If $d$ is even, then $d-1$ is odd, contradicting $\mu^{d-1} \in \langle \mu^2\rangle$.

Recall the notations of Example~\ref{ex:d>3q}.
We have seen in Lemma~\ref{lm:cGdq} that $\GL_d(q)_{v_1^P}=N:(M\times R)\cong [q^{d-1}]:(\GL_{d-1}(q)\times \ZZ_{(q-1)/2})$, and hence $(\ZZ_2\times \PGL_d(q))_{v_1^P}\cong \GL_d(q)_{v_1^P}/P\cong \AGL_{d-1}(q)$.
Since $\PSL_d(q)$ is transitive on $\Ome$ by Lemma~\ref{lm:cGdq}, so are $\PGL_d(q)$ and $\ZZ_2\times \PGL_d(q)$.
Hence $\PGL_d(q)_{v_1^P}$ is the unique subgroup with index $2$ in $(\ZZ_2\times \PGL_d(q))_{v_1^P}\cong \AGL_{d-1}(q)$.
Recall that $L_\alpha=\PGL_d(q)_\alpha$ is also the unique  subgroup with index $2$ in $L_B\cong \AGL_{d-1}(q)$.
Therefore, the action of $\PGL_d(q)$ on $\Ome$ is equivalent to that on $\Del_1$.
Then,  $\Ga \cong \mathcal{G}_{(d,q)}^{(1)}$ or $\mathcal{G}_{(d,q)}^{(2)}$ by Example~\ref{ex:d>3q}. \qed

Now we are ready to complete the proof of Theorem~\ref{Thm-2}.

\vskip0.1in
\noindent{\bf{Proof of Theorem~\ref{Thm-2}.}}
Let $\Ga$ be an $X$-arc-transitive bi-Cayley graph on a dihedral group $H=\l a\r:\l b\r\cong\D_{2n}$ with $n\ge 3$.
Suppose that $N$ is a normal subgroup of $X$ which is a maximal with respect to having at least three orbits on $V\Ga$.
By \cite[Theorem~3.3]{DGJ22}, $\Ga$ is a normal $r$-cover of $\Ga_N$ with $r$ is a divisor of the valency of $\Ga$.
Furthermore, $X/N\leq\Aut\Ga$, $\Ga_N$ is $X/N$-arc-transitive, and $X/N$ is either quasiprimitive or bi-quasiprimitive on $V(\Ga_N)$.

Set $\bar{X}=X/N$, and $Y=NH$ and $\bar{Y}=Y/N$. Then $\bar{Y}=Y/N\cong H/(H\cap N)$. Since $H$ is dihedral, it follows that $H/(H\cap N)\cong\ZZ_2$ or $\D_{2k}$ with $k\mid n$.

Let $\BB=\{B_1,B_2,\dots,B_t\}=V(\Ga_N)$, namely, the set of $N$-orbits on $V\Ga$,
and let $H_0$ and $H_1$ be the two orbits of $H$ on $V\Ga$.
Then $\BB$ is an $X$-invariant partition of $V\Ga$, and by Lemma~\ref{Block},
either \begin{itemize}
\item[(a)] each $B\in\BB$ is a subset of $H_0$ or $H_1$; or
\item[(b)] each $B\in\BB$ satisfies $B\cap H_0\ne \emptyset$ and $B\cap H_1\ne\emptyset$.
\end{itemize}

Assume first case (a) occurs. Then $H_0$ and $H_1$ are the two orbits of $Y$ on $V\Ga$,
and so $\bar{Y}$ has two orbits $(H_0)_N$ and $(H_1)_N$ on $V(\Ga_N)$,
where $(H_0)_N$ and $(H_1)_N$ denote the sets of $N$-orbits on $H_0$ and $H_1$ respectively.


Recall that $\bar{Y}\cong\ZZ_2$ or $\D_{2k}$ with $k\mid n$. For the former, we have $|\BB|=2$ and then $\G_N\cong\K_{2,2}$, which is a graph appearing in part (1)(a). 
For the latter, we may let $\bar{Y}:=\l \bar{a}\r:\l \bar{b}\r$, where $\bar{a}=aN$ and $\bar{b}=bN$.
Notice that $\bar{Y}_B$ is core-free in $\bar{Y}$,
so $\bar{Y}_B=1$ or $\l \bar{a}^s\bar{b}\r\cong\ZZ_2$ for some integer $s$.
In the former case, $\bar{Y}$ is regular on $(H_0)_N$ and $(H_1)_N$.
So $\Ga_N$ is a bi-Cayley graph of $\bar{Y}$, and then by Lemmas~\ref{Ver-QP}--\ref{Imprim},
$\Ga$ is one of the graphs in part (1) of Theorem~\ref{Thm-2}.
In the latter case, we have $\l \bar{a}\r \bar{Y}_B=\bar{Y}$, and
so $\l \bar{a}\r$ is transitive and hence regular on each $(H_i)_N$. It follows that $\Ga_N$ is a bi-Cayley graph of $\l a\r$, and so $\Ga_N$ is
a quasiprimitive or bi-quasiprimitive bi-circulant, as in part (2) of Theorem~\ref{Thm-2}.

Assume now case (b) occurs. Then $H$ is transitive on $\BB$, and so
$\bar{Y}$ is transitive on $\BB$. Since $|\BB|=t\geq3$, we have $\bar{Y}\ncong\ZZ_2$, and so $\bar{Y}=\l \bar{a}\r:\l \bar{b}\r\cong\D_{2k}$ with $k\mid n$. Then $\bar{Y}_B=1$ or $\l \bar{a}^s\bar{b}\r\cong\ZZ_2$ for some integer $s$.
For the former, $\Ga_N$ is a arc-transitive dihedrant,
satisfying part (3) of Theorem~\ref{Thm-2}.
and for the latter, $\l \bar{a}\r$ is regular on $\BB$, so
$\Ga_N$ is a circulant, satisfying part (2) of Theorem~\ref{Thm-2}.
This completes the proof.\qed

\end{document}